\date{today}
\DeclareMathAlphabet{\mathpzc}{OT1}{pzc}{m}{it}
\author[M. Logares]{Marina Logares}
\address{Instituto de Ciencias Matem\'aticas (CSIC-UAM-UC3M-UCM),
C/ Nicolas Cabrera 15, 28049 Madrid, Spain}
\email{marina.logares@icmat.es}
\author[V. Mu\~{n}oz]{Vicente Mu\~{n}oz}
\address{Facultad de Matem\'aticas, Universidad Complutense de Madrid,
Plaza Ciencias 3, 28040 Madrid Spain}
\email{vicente.munoz@mat.ucm.es}
\author[P. Newstead]{P. E. Newstead}
\address{Department of Mathematical Sciences,
University of Liverpool, Peach Street,
Liverpool L69 7ZL, UK}
\email{newstead@liverpool.ac.uk}
\title[Hodge polynomials of character varieties]{Hodge polynomials
of $\SL(2,\CC)$-character varieties for curves of small genus}
\subjclass[2010]{Primary: 14C30. Secondary: 14D20, 14L24, 32J25}
\keywords{Moduli spaces, E-polynomial, character varieties, surface group}
\thanks{The first author is supported by an i-Math Future contract and partially supported by FCT(Portugal) through project PTDC/MAT/099275/2008 and (Spain) project  MTM2010-17717. The second author is supported by (Spanish MICINN) research project MTM2010-17389. The first and third authors would like to thank the Isaac Newton Institute, where this work was completed during the Moduli Spaces programme. }
\date{\today}
\DeclareMathOperator{\Id}{Id\,}             
\DeclareMathOperator{\tr}{Tr\,}             
\DeclareMathOperator{\GL}{GL}
\DeclareMathOperator{\SL}{SL}
\DeclareMathOperator{\PGL}{PGL}
\DeclareMathOperator{\stab}{Stab}
\DeclareMathOperator{\Gr}{Gr}
\DeclareMathOperator{\Tr}{Tr\,}       
\newcommand{\G}{\Gamma}         
\begin{document}

\newtheorem{thm}{Theorem}[section]
\newtheorem{prop}[thm]{Proposition}
\newtheorem{lem}[thm]{Lemma}
\newtheorem{cor}[thm]{Corollary}
\newtheorem{conjecture}{Conjecture}

\theoremstyle{definition}
\newtheorem{defn}[thm]{Definition}
\newtheorem{ex}[thm]{Example}
\newtheorem{as}{Assumption}

\theoremstyle{remark}
\newtheorem{rmk}[thm]{Remark}

\theoremstyle{remark}
\newtheorem*{prf}{Proof}

\newcommand{\iacute}{\'{\i}} 
\newcommand{\norm}[1]{\lVert#1\rVert} 

\newcommand{\lto}{\longrightarrow}
\newcommand{\hra}{\hookrightarrow}

\newcommand{\suchthat}{\;\;|\;\;}
\newcommand{\dbar}{\overline{\partial}}

\newcommand{\cC}{\mathcal{C}}
\newcommand{\cD}{\mathcal{D}}
\newcommand{\cG}{\mathcal{G}} 
\newcommand{\cO}{\mathcal{O}} 
\newcommand{\cM}{\mathcal{M}} 
\newcommand{\cN}{\mathcal{N}} 
\newcommand{\cP}{\mathcal{P}} 
\newcommand{\cS}{\mathcal{S}} 
\newcommand{\cU}{\mathcal{U}} 
\newcommand{\cX}{\mathcal{X}}
\newcommand{\cT}{\mathcal{T}}
\newcommand{\cV}{\mathcal{V}}
\newcommand{\cB}{\mathcal{B}}
\newcommand{\cR}{\mathcal{R}}
\newcommand{\cH}{\mathcal{H}}

\newcommand{\ext}{\mathrm{ext}} 
\newcommand{\x}{\times}

\newcommand{\mM}{\mathscr{M}} 

\newcommand{\CC}{\mathbb{C}} 
\newcommand{\QQ}{\mathbb{Q}} 
\newcommand{\PP}{\mathbb{P}} 
\newcommand{\HH}{\mathbb{H}} 
\newcommand{\RR}{\mathbb{R}} 
\newcommand{\ZZ}{\mathbb{Z}} 

\renewcommand{\lg}{\mathfrak{g}} 
\newcommand{\lh}{\mathfrak{h}} 
\newcommand{\lu}{\mathfrak{u}} 
\newcommand{\la}{\mathfrak{a}} 
\newcommand{\lb}{\mathfrak{b}} 
\newcommand{\lm}{\mathfrak{m}} 
\newcommand{\lgl}{\mathfrak{gl}} 
\newcommand{\too}{\longrightarrow}
\newcommand{\imat}{\sqrt{-1}} 

\hyphenation{mul-ti-pli-ci-ty}

\hyphenation{mo-du-li}

\begin{abstract}
We compute the E-polynomials of the moduli spaces of representations of the
fundamental group of a complex curve into $\SL(2,\CC)$, for the case of small genus
$g$, and allowing the holonomy around a fixed point to be any matrix of $\SL(2,\CC)$, that is
$\Id, -\Id$, diagonalisable, or of either of the two Jordan types.

For this, we introduce a new geometric technique, based on stratifying the space of
representations, and on the analysis of the behaviour of the E-polynomial under
fibrations.
\end{abstract}

\maketitle


\section{Introduction}\label{sec:introduction}

Let $X$ be a smooth complex projective curve of genus $g\geq 1$, and let $G$ be a complex reductive group.
The $G$-character variety  of $X$ is defined as the moduli space of semisimple representations of
$\pi_{1}(X)$ into $G$. The topology of $G$-character varieties has been studied extensively in the past ten years.
The main purpose was to prove certain
conjectures on the mirror symmetry phenomena exhibited in the non-abelian Hodge theory of a curve.

The  $G$-character variety is the space
 $$
  \cM (G)= \{(A_{1},B_{1},\ldots,A_{g},B_{g}) \in G^{2g} \,| \prod_{i=1}^{g}[A_{i},B_{i}]=\Id\}/ / G.
 $$
For complex linear groups $G=\GL(n,\CC), \SL(n,\CC)$, the representations of $\pi_1(X)$ into $G$ can be understood
as $G$-local systems $E\to X$, hence defining
a flat bundle $E$ which has $\deg E=0$. A natural generalization consists of allowing bundles $E$ of
non-zero degree $d$. The $G$-local systems on $X$ correspond to representations $\rho:\pi_1(X\setminus \{p_0\}) \to G$,
where $p_0\in X$ is a fixed point, and $\rho(\gamma)= -\frac{d}{n} \Id$, $\gamma$ a loop around $p_0$, giving
rise to the moduli space
 $$
  \cM^d (G)= \{(A_{1},B_{1},\ldots,A_{g},B_{g}) \in G^{2g} \,| \prod_{i=1}^{g}[A_{i},B_{i}]=e^{2\pi i d/n} \Id \}/ / G.
 $$

The space $\cM^d(G)$ is known in the literature as the Betti moduli space.
This space is closely related to two other spaces: the De Rham and Dolbeault moduli spaces. The De Rham moduli space
$\cD^d(G)$ is the moduli space parameterizing flat bundles, i.e., $(E,\nabla)$, where $E\to X$ is an algebraic bundle of
degree $d$ and rank $n$ (and fixed determinant in the case $G=\SL(n,\CC)$), and
$\nabla$ is an algebraic connection on $X\setminus \{p_0\}$ with a logarithmic pole at
$p_0$ with residue $-\frac{d}{n} \Id$. The Riemann-Hilbert correspondence
\cite{deligne:1970,simpson:1995} gives an analytic isomorphism (but not an algebraic isomorphism)
$\cD^d(G) \cong \cM^d(G)$.

The Dolbeault moduli space $\cH^d(G)$ is the moduli space of $G$-Higgs
bundles $(E,\Phi)$, consisting of an algebraic bundle
$E\to X$ of degree $d$ and rank $n$ (also with fixed determinant in the case $G=\SL(n,\CC)$),
and a homomorphism $\Phi:E\to E\otimes K_X$ (known as the Higgs field - in the case $G=\SL(n,\CC)$, the Higgs field has trace $0$). In this situation the
theory of harmonic bundles \cite{corlette:1988,simpson:1992} gives a homeomorphism $\cM^d(G)\simeq \cH^d(G)$.

When $\gcd(n,d)=1$ these moduli spaces are smooth and the underlying differential manifold is
hyperk\"{a}hler, but the complex structures do not correspond under the previous isomorphisms.
The cohomology of these moduli spaces has been computed in several particular cases, but mostly
from the point of view of the Dolbeault moduli space $\cH^{d}(G)$.
Poincar\'e polynomials for $G=\SL(2,\CC)$ were computed by Hitchin in his seminal paper on Higgs
bundles \cite{hitchin:1987} and for $G=\SL(3,\CC)$ by Gothen in \cite{gothen:1994}.
More recently the techniques involved in the computations by Gothen and Hitchin have been improved
to compute the classes in the completion of the Grothendieck ring for these varieties in the $G=\GL(4,\CC)$ case, and from their computations it is also possible to deduce the compactly supported Hodge polynomials \cite[Section 1.2]{garciaprada-heinloth-schmitt:2011}.

Hausel and Thaddeus \cite{hausel-thaddeus:2003} gave a new perspective for the topological study of these
varieties giving the first non-trivial example of the Strominger-Yau-Zaslow Mirror Symmetry using the so called
Hitchin system \cite{hitchin:1987-2} for the Dolbeault moduli space. They conjectured also (and checked for
$G=\SL(2,\CC)$ and $G=\SL(3,\CC)$ using the previous results by Hitchin and Gothen) that a version of the
topological Mirror Symmetry holds, i.e., some Hodge numbers $h^{p,q}$ of $\cH^d(G)$ and $\cH^d(G^{L})$, for $G$ and
Langlands dual $G^{L}$, agree. It was noticed that while the mixed Hodge structures of $\cH^{d}(G)$
and $\cD^{d}(G)$ agree \cite[Theorem 6.2]{hausel-thaddeus:2003} and are pure \cite{MMehta:2001,hausel:2004}
this is not the case for $\cM^{d}(G)$. This fact motivates the
study of E-polynomials of the $G$-character varieties.

Hausel and Rodriguez-Villegas started the computation of the E-polynomials of $G$-character
varieties focusing on $G=\GL(n,\CC), \SL(n,\CC)$ and $\PGL(n,\CC)$ using arithmetic methods inspired on the Weil
conjectures. In \cite{hausel-rvillegas:2007} they obtained the E-polynomials of $\cM^{d}(G)$
for $G=\GL(n,\CC)$ in terms of a simple generating function.
Following the methods of Hausel and Rodriguez-Villegas, Mereb \cite{mereb:2010} studied the case of
$\SL(n,\CC)$ giving
an explicit formula for the E-polynomial in the case $G=\SL(2,\CC)$, while for $\SL(n,\CC)$ these polynomials are given in terms of a generating function. He proved also that the E-polynomials for $\SL(n,\CC)$ are palindromic and monic (so the $\SL(n,\CC)$-character varieties are connected).

Another direction of interest is the moduli spaces of parabolic bundles. Let $X$ be a complex curve with $s$ marked
points $p_1,\ldots,p_s$. For $i\le s$, fix conjugacy classes $\cC_i\subset G$ given by semisimple elements. The corresponding Betti moduli space of parabolic
representations (or \textit{parabolic character variety}) is
 \begin{eqnarray*}
 P\cM^{\cC_1,\ldots, \cC_s}(G) &:=& \{ (A_{1},B_{1},\ldots, A_{g},B_{g},C_{1}, \ldots, C_{s})\in G^{2g+s} \
 |  \\
 & & \ \ \prod_{i=1}^{g}[A_{i},B_{i}]\prod_{j=1}^s C_j = \Id , \ C_j \in \cC_j, \ j=1,\ldots,s \} / /G\, .
\end{eqnarray*}

In \cite{simpson:1990} Simpson proved that this space is analytically isomorphic to the moduli space of flat
logarithmic $G$-connections 
and homeomorphic to a moduli space of Higgs bundles with parabolic structures at $p_1,\ldots,p_s$.
Very recently, formulae for the E-polynomials of the parabolic character varieties for $G=\GL(n,\CC)$ and
generic semisimple $\cC_1,\ldots,\cC_s$ have been obtained by Hausel, Letellier and Rodriguez-Villegas
\cite{hausel-letellier-rvillegas:2011} using arithmetic methods.
In addition, the Poincar\'e polynomials have been
computed in the context of parabolic Higgs bundles for $G=\SL(2,\CC)$ in \cite{by} and for
$G=\SL(3,\CC)$ and $\GL(3,\CC)$ in \cite{garciaprada-gothen-munoz:2004}.

In this paper we consider certain character varieties for the group $G=\SL(2,\CC)$. For any element $\xi$ belonging to a conjugacy class $\cC\subset \SL(2,\CC)$, we define
 \begin{eqnarray*}
  \cM_\xi &=& \{(A_{1},B_{1},\ldots,A_{g},B_{g}) \in \SL(2,\CC)^{2g} \,| \prod_{i=1}^{g}[A_{i},B_{i}]=\xi\}/ / \stab(\xi) \\
 &=& \{(A_{1},B_{1},\ldots,A_{g},B_{g}) \in \SL(2,\CC)^{2g} \,| \prod_{i=1}^{g}[A_{i},B_{i}] \in \cC\}/ / G \, .
 \end{eqnarray*}
This space has the structure of an algebraic variety independent of the complex structure on $X$.
When $\cC$ is semisimple, $\cM_\xi$ is a parabolic character variety as defined earlier, but we do not
make such restriction on $\cC$. Also we do not assume that $\cC$ satisfies a genericity condition.

Our basic aim is to compute the E-polynomial (or Hodge-Deligne polynomial) of $\cM_\xi$.
For this purpose, we propose a geometric technique,
which allows us to deal with the cases in which $\xi$ is presented in Jordan form. Our method depends
on certain basic properties of E-polynomials, in particular the additive property (Proposition \ref{thm:basics}(i)),
which allow us to compute these polynomials using stratifications. We also require methods for handling fibrations
which are locally trivial in the analytic topology but not in the Zariski topology (Propositions \ref{basics2},
\ref{prop:fibration} and \ref{prop:e-total-space}). Our main results are explicit formulae for the E-polynomials
for $g=1$ and $g=2$ for the five
different moduli spaces  corresponding to $\xi$ being $\Id$, $-\Id$, diagonal with different
eigenvalues $\xi_\lambda=\left( \begin{array}{cc} \lambda & 0\\ 0 & \lambda^{-1} \end{array}\right)$, and of
either of the two Jordan types $J_+=\left( \begin{array}{cc} 1 & 1\\ 0 & 1\end{array}\right)$,
$J_-=\left( \begin{array}{cc} -1 & 1\\ 0 & -1\end{array}\right)$. These results can be summarised as follows:

\begin{thm}\label{thm11}
  Let $X$ be a complex curve of genus $g=1$. Then the E-polynomials of $\cM_\xi$ are as follows:
  \begin{eqnarray*}
    e(\cM_{\Id}) &=& q^2+1\,, \\
    e(\cM_{-\Id}) &=& 1\, ,\\
    e(\cM_{J_+}) &=&q^2-2q-3 \, , \\
    e(\cM_{J_-}) &=& q^2 +3q\, ,\\
    e(\cM_{\xi_\lambda}) &=& q^2+4q+1\, ,
  \end{eqnarray*}
where $q=\, uv$, $e(\cM_\xi)\in \ZZ[u,v]$.
\end{thm}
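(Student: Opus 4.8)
The plan is to treat $g=1$ directly. Here $\cM_\xi=\cV_\xi/\!/\stab(\xi)$, where $\cV_\xi:=\{(A,B)\in\SL(2,\CC)^2 : [A,B]=\xi\}$, equivalently $\cV_\cC/\!/G$ with $\cV_\cC:=\{(A,B):[A,B]\in\cC\}$ and $G=\SL(2,\CC)$ acting by simultaneous conjugation. The first thing I would record is the reducibility dichotomy: if $(A,B)$ is reducible (simultaneously triangularisable) then the diagonal parts commute, so $[A,B]$ is unipotent of trace $2$; hence reducible representations occur \emph{only} for $\xi\in\{\Id,J_+\}$, while for $\xi\in\{\xi_\lambda,-\Id,J_-\}$ every representation is irreducible. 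I would then split $\cV_\xi$ into its irreducible and reducible loci and treat each separately, using the additivity of $e$ (Proposition \ref{thm:basics}(i)).

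On the irreducible locus the stabiliser of $(A,B)$ in $G$ is exactly the centre $\{\pm\Id\}$, so $\stab(\xi)$ acts with kernel $\{\pm\Id\}$ and the effective quotient group acts freely. For $\xi=\xi_\lambda$ this effective group is $T/\{\pm\Id\}\cong\CC^*$, and for $\xi=J_\pm$ it is the unipotent radical of $\stab(\xi)$, isomorphic to the additive group $\CC$; both are \emph{special}, so the quotient $\cV_\xi^{\mathrm{irr}}\to\cM_\xi^{\mathrm{irr}}$ is a Zariski-locally-trivial principal bundle and Proposition \ref{prop:e-total-space} yields $e(\cM_{\xi_\lambda})=e(\cV_{\xi_\lambda})/(q-1)$ and $e(\cM_{J_\pm}^{\mathrm{irr}})=e(\cV_{J_\pm}^{\mathrm{irr}})/q$. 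The case $\xi=-\Id$ I would dispose of by hand: the relation $[A,B]=-\Id$ forces $AB=-BA$ with $A,B$ of trace zero, so $A^2=B^2=-\Id$ (a quaternionic pair), and a short normalisation shows all such pairs form a single free $\PGL(2,\CC)$-orbit; hence $\cM_{-\Id}$ is a point and $e(\cM_{-\Id})=1$.

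To compute the fibre polynomials $e(\cV_\xi)$ I would rewrite $[A,B]=\xi$ as $ABA^{-1}=\xi B$, i.e.\ $A$ conjugates $B$ to $\xi B$. Projecting $\cV_\xi$ onto the $B$-factor, the fibre over a given $B$ is nonempty precisely when $B$ and $\xi B$ are conjugate (in particular $\tr B=\tr(\xi B)$, a linear condition cutting out the admissible $B$), and is then a torsor under the centraliser $\stab(B)$. I would stratify the base by the conjugacy type of $B$ (central, regular semisimple, or Jordan), compute $e$ of each stratum and of the associated centraliser-torsor fibre, and reassemble everything by additivity and the fibration results (Propositions \ref{basics2}, \ref{prop:fibration}, \ref{prop:e-total-space}), which are exactly what is needed since these torsor fibrations are locally trivial only in the analytic topology. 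Combined with the previous step this gives $e(\cM_{\xi_\lambda})=q^2+4q+1$ and $e(\cM_{J_-})=q^2+3q$.

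The two genuinely delicate cases are $\xi=\Id$ and $\xi=J_+$, and this is where I expect the main obstacle. For $\xi=\Id$, $\cV_{\Id}$ is the commuting variety and the quotient is by all of $G$ with non-trivial stabilisers, so one must pass to the GIT quotient: its points are the closed (polystable) orbits, namely the simultaneously diagonalisable pairs, parametrised by $(\CC^*\times\CC^*)/(\ZZ/2)$ under the Weyl involution $(a,b)\mapsto(a^{-1},b^{-1})$; evaluating $e$ of this finite quotient through the invariant part of the cohomology (Proposition \ref{basics2}) produces $q^2+1$. For $\xi=J_+$ one must further add the reducible contribution, and here the subtlety is that the class $\cC_{J_+}$ is \emph{not} closed — its closure contains $\Id$ — so the semisimplifications of the reducible representations leave the stratum and the GIT identifications must be analysed directly; it is precisely these non-closed orbits and the attendant bookkeeping that generate the negative coefficients in $e(\cM_{J_+})=q^2-2q-3$. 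The difficulty throughout is thus twofold: justifying multiplicativity of $e$ for fibrations that are only analytically (not Zariski) locally trivial, which is handled by Propositions \ref{basics2}, \ref{prop:fibration} and \ref{prop:e-total-space}; and carrying out the GIT analysis on the non-generic strata for $\xi=\Id$ and $\xi=J_+$, where orbits fail to be closed and stabilisers jump.
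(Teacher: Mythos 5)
Your overall strategy coincides in substance with the paper's (Sections \ref{sl2} and \ref{charvarg=1}): stratify by the conjugacy type of $\xi$, compute $e$ of the fibre $\cV_\xi=\{(A,B):[A,B]=\xi\}$ starting from the trace constraints $\tr B=\tr(\xi B)$ and $\tr A=\tr(\xi^{-1}A)$, and divide by the free action of $\stab(\xi)/\{\pm\Id\}$. Your treatments of $\Id$, $-\Id$, $\xi_\lambda$ and $J_-$ are sound and match the paper's, and your torsor-under-$Z(B)$ bookkeeping is a legitimate variant of the paper's normal-form computations — though note that for $B$ of Jordan type $Z_{\SL(2,\CC)}(B)\cong\ZZ_2\times\CC$ is disconnected, so the ``special group'' argument does not apply verbatim there and you would still need the analytic-fibration machinery. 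A citation slip: the divisions $e/(q-1)$ and $e/q$ rest on Proposition \ref{basics2} together with Remark \ref{rem:new} (principal bundles with connected structure group), not on Proposition \ref{prop:e-total-space}, which is the monodromy formula for fibrations over a punctured line.

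The genuine gap is your plan for $J_+$. First, its irreducible locus is empty: the trace relations force $A$ and $B$ simultaneously upper triangular (subsection \ref{4.3}), so the entire stratum is reducible and your formula $e(\cM_{J_+}^{\mathrm{irr}})=e(\cV_{J_+}^{\mathrm{irr}})/q$ contributes nothing; there is no ``irreducible part plus reducible correction''. Second, your proposed mechanism — non-closed orbits, semisimplifications leaving the stratum, ``GIT identifications'' generating the negative coefficients — misidentifies what actually happens, and taken literally it gives a wrong answer. Any element of $\PGL(2,\CC)$ carrying one solution of $[A,B]=J_+$ to another must fix $J_+$, so the quotient is by $\stab(J_+)/\{\pm\Id\}\cong\CC$ alone; this group acts freely and transitively along the fibres of the affine-line bundle $\overline{X}_2\to\CC^*\times\CC^*\setminus\{(\pm1,\pm1)\}$, $(A,B)\mapsto(a,x)$, every orbit is closed in $\overline{X}_2$, and the quotient is geometric: no S-equivalence collapsing occurs, even though $(a,x)$ and $(a^{-1},x^{-1})$ have the same semisimplification (Remark \ref{rmk:cMJ+-red}: the map $\cM_{J_+}\to\cM_{\Id}$ is precisely the quotient by that involution, so it is $2$:$1$, not injective). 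Had you identified S-equivalent points you would obtain $\left((\CC^*\times\CC^*)/\ZZ_2\right)$ minus the four (fixed) points $(\pm1,\pm1)$, with E-polynomial $q^2+1-4=q^2-3$, not $q^2-2q-3$. The negative coefficients arise simply because the four points $(a,x)=(\pm1,\pm1)$ — where the commutator would degenerate to $\Id$ — support no solutions of the defining equation, so $e(\cM_{J_+})=(q-1)^2-4$.
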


\begin{thm}\label{thm12}
  Let $X$ be a complex curve of genus $g=2$. Then
  \begin{eqnarray*}
    e(\cM_{\Id}) &=& q^6+17q^4 + q^2+1\, ,\\
    e(\cM_{-\Id}) &=& q^6-2q^4-30q^3-2q^2+1\, ,\\
    e(\cM_{J_+}) &=& q^8  -3 q^6 -4 q^5 - 39 q^4-4 q^3 -15 q^2 \, ,\\
    e(\cM_{J_-}) &=& q^8-3 q^6+15 q^5+6 q^4+45 q^3 \, , \\
    e(\cM_{\xi_\lambda}) &=& q^8 + q^7- 2q^6 + 13q^5-26q^4+13q^3 -2 q^2+q+1 \, .
  \end{eqnarray*}
\end{thm}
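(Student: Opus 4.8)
The plan is to compute each of the five $E$-polynomials for $g=2$ by the same stratification-and-fibration strategy that (presumably) was set up for the genus $1$ case, now applied to the variety
\[
\cM_\xi = \{(A_1,B_1,A_2,B_2)\in\SL(2,\CC)^4 \mid [A_1,B_1][A_2,B_2]=\xi\}//\stab(\xi).
\]
First I would fix $\xi$ in each of the five conjugacy classes ($\Id$, $-\Id$, $\xi_\lambda$, $J_+$, $J_-$) and introduce the ``commutator map'' $\mu:\SL(2,\CC)^2\to\SL(2,\CC)$, $\mu(A,B)=[A,B]$. The key structural observation is that
\[
[A_1,B_1][A_2,B_2]=\xi \iff [A_2,B_2]=[A_1,B_1]^{-1}\xi,
\]
so one can fibre $\cM_\xi$ over the space of possible values $z=[A_1,B_1]$ of the first commutator, with fibre over $z$ being $\mu^{-1}(z)\times\mu^{-1}(z^{-1}\xi)$ before quotienting. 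Thus the whole computation reduces to understanding, as a function of the conjugacy type of $z$, the $E$-polynomial of the individual commutator fibres $\mu^{-1}(z)$ together with how the classes vary in families.

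The central computation I would carry out is therefore the determination of $e(\mu^{-1}(\cC))$ for each conjugacy class $\cC\subset\SL(2,\CC)$ (i.e. the $E$-polynomial of the locus of pairs $(A,B)$ whose commutator lies in a given class), which is exactly the genus $1$ input. Here I would stratify $\SL(2,\CC)^2$ according to the conjugacy type of the commutator $[A,B]$ --- the strata being $[A,B]=\Id$, $=-\Id$, diagonalisable, or of Jordan type $J_\pm$ --- and use the additivity of $E$-polynomials (Proposition \ref{thm:basics}(i)) to assemble the pieces. Each stratum is fibred over the relevant conjugacy class $\cC$ (a point, or a $2$- or $3$-dimensional orbit), and I would apply the fibration machinery (Propositions \ref{basics2}, \ref{prop:fibration}, \ref{prop:e-total-space}) to handle the fact that these are only analytically, not Zariski, locally trivial. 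Crucially, the genus $1$ answers of Theorem \ref{thm11} already package precisely this information, so the $g=2$ case becomes a convolution: I would ``multiply'' two copies of the genus $1$ fibre data, weighted by how the condition $[A_2,B_2]=z^{-1}\xi$ constrains $z$ to each stratum, and integrate over the conjugacy class of $z$.

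The main obstacle, and the place demanding the most care, will be the non-generic strata where $z^{-1}\xi$ degenerates --- in particular the diagonal locus, where as $z$ ranges over diagonalisable elements the product $z^{-1}\xi$ can itself pass through $\pm\Id$ or become non-regular, forcing a finer stratification and a correct treatment of the varying stabiliser $\stab(\xi)$ under the quotient. For $\xi$ of Jordan type $J_\pm$ the stabiliser is the non-reductive $2$-dimensional subgroup of upper-triangular unipotent-times-$\pm\Id$ matrices, so I must verify that the $//\stab(\xi)$ quotient interacts correctly with the fibration $E$-polynomial formulas (the fibres of the quotient map being $\stab(\xi)$-orbits of possibly jumping dimension). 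I expect the bookkeeping of these boundary strata --- ensuring the additive contributions match across the loci where conjugacy types collide and the eigenvalue parameter $\lambda$ specialises --- to be where essentially all the genuine difficulty lies; once the strata and their fibrations are correctly identified, each individual $e$-contribution is a routine product of the $\SL(2,\CC)$-orbit $E$-polynomials (e.g. $e(\SL(2,\CC))=q^3-q$, $e(\cC_{\text{diag}})=q^2+q$) with the genus $1$ data, and summing them yields the stated polynomials.
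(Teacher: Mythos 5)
Your overall architecture — stratifying by the conjugacy type of the commutators and reducing the $g=2$ count to genus-$1$ fibre data glued over the parameter space of $z=[A_1,B_1]$ — is indeed the paper's strategy. But there is a genuine gap at the step where you claim that ``the genus $1$ answers of Theorem \ref{thm11} already package precisely this information, so the $g=2$ case becomes a convolution.'' They do not. On the stratum where both commutators are regular semisimple, the relevant object is the fibration $\overline{Y}_4\to\CC\setminus\{0,\pm1\}$ (and its $\ZZ_2$-quotient over $\CC\setminus\{\pm2\}$) whose fibre over $\lambda$ is $\overline{X}_{4,\lambda}\times\overline{X}_{4,\lambda}$. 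This fibration is only analytically locally trivial and has \emph{nontrivial monodromy}, so $e$ of the total space is not $e(\text{fibre})\cdot e(\text{base})$, and no formula in terms of the fibre E-polynomials alone can be correct: by Proposition \ref{prop:e-total-space} one needs $e(F)^{inv}$, the E-polynomial of the monodromy-invariant part of $H^*_c(F)$, which is strictly finer information. Concretely, $e(\overline{X}_4)=q^4-3q^3-6q^2+5q+3$, whereas the naive product $(q-3)\,e(\overline{X}_{4,\lambda})=(q-3)(q^3+3q^2-3q-1)$ is wrong, exactly because the monodromy around $\lambda=0$ acts as an involution with noninvariant part of E-polynomial $3q^2-3q$ (Theorem \ref{thm:R(overline)}).

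What your proposal is missing, then, is the paper's key new tool: the \emph{Hodge monodromy representation}, i.e.\ the class $R(\overline{X}_4)=(q^3-1)T+(3q^2-3q)N$ in $R(\ZZ_2)[q]$, and — because the slice quotient forces one to work over $\CC\setminus\{\pm2\}$ via $s=\lambda+\lambda^{-1}$ — its refinement $R(\overline{X}_4/\ZZ_2)=q^3\,T-3q\,S_2+3q^2\,S_{-2}-S_0$ in $R(\ZZ_2\times\ZZ_2)[q]$ (Theorem \ref{thm:R(overline/Z2)}), which requires a fresh stratum-by-stratum analysis of how the loops $\eta_{\pm2}$ act, not deducible from Theorem \ref{thm11}. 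The ``convolution'' you envisage is valid only at this level: Hodge monodromy representations are multiplicative, $R(\overline{Y}_4/\ZZ_2)=R(\overline{X}_4/\ZZ_2)\otimes R(\overline{X}_4/\ZZ_2)$, with twists $\tau^*$ interchanging $S_2$ and $S_{-2}$ needed for the cases $\xi=-\Id$, $J_-$ and $\xi_{\lambda_0}$, where the fibres are $\overline{X}_{4,\mu}\times\overline{X}_{4,-\mu}$ or $\overline{X}_{4,\mu}\times\overline{X}_{4,\lambda_0\mu}$; one then extracts $e$ via Proposition \ref{prop:e-total-space} and Remark \ref{rmk:the-e-continuation}. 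Two further points your sketch underestimates: for $\cM_{\Id}$ the GIT quotient identifies S-equivalence classes, so the reducible locus $(\CC^*)^4/\ZZ_2$ and the irreducible locus (parameterised via common eigenvectors and a cross-ratio, with its own $\ZZ_2\times\ZZ_2$ quotient) must be computed separately rather than by dividing by $e(\PGL(2,\CC))$; and for $\xi=\xi_{\lambda_0},J_\pm$ the correct base of the stratification is the trace plane $(t_1,t_2)=(\tr\eta,\tr\,\xi\eta)$, with degeneration loci along $t_i=\pm2$ and a hyperbola $ad=1$, which you correctly anticipate but which again feeds into monodromy computations, not mere E-polynomial products.
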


The layout of the paper is as follows. Section  \ref{sec:method} consists of a review of the basic properties of E-polynomials, including a description of the methods  which we use for computing these polynomials. This is followed in section \ref{sec:SL-and-GL} by a computation of the E-polynomials for the groups $\GL(2,\CC)$, $\SL(2,\CC)$ and $\PGL(2,\CC)$ and also for the stratification of $\SL(2,\CC)$ by conjugacy class types. In section \ref{sl2}, we stratify $\SL(2,\CC)^2$ and compute the E-polynomials for each stratum. In section \ref{charvarg=1}, we use these computations to prove Theorem \ref{thm11}. It turns out that we can do slightly better and compute all Hodge numbers of $\cM_\xi$ (except in the case $\xi=J_-$).

Sections \ref{sec:representation} and \ref{sec:representation2} are key sections in which we introduce the \emph{Hodge monodromy representation}. For this, we define first
$$\overline{X}_{4}:=\left\{(A,B,\lambda)\, | \, AB=\left(\begin{array}{cc}
      \lambda & 0 \\
      0 & \lambda^{-1}\\
    \end{array}
  \right) BA, \text{ for some } \lambda\ne0, \pm1,\, A,\,B \in \SL(2,\CC)\,\right\}.$$
The map $(A,B,\lambda)\mapsto\lambda$ fibres this space over $\CC\setminus\{0,\pm1\}$, but the fibration is only analytically locally trivial. The idea of the Hodge monodromy representation is that it encodes in a convenient form all the information on the E-polynomials of both
the fibre and the total space. We compute this for $\overline{X}_4$ in section \ref{sec:representation} and extend it to $\overline{X}_4/\ZZ_2$ in section \ref{sec:representation2} (here $\ZZ_2$ acts on $\overline{X}_4$ by interchange of basis vectors). In sections \ref{sec:g=2}--\ref{sec:J-}, we prove Theorem \ref{thm12}.

Our arguments can certainly be extended to higher genus, but the computations become rapidly more complicated as
$g$ increases. Another advantage of a geometrical method of this type is that it may allow
one to compute the motives of the character
varieties. This will be pursued in future work.

\noindent\textbf{Acknowledgements}
We thank Tam\'as Hausel and Richard Thomas for helpful comments.
In particular, several conversations with T.\ Hausel
have been invaluable to confirm the correctness of our polynomials. We also thank the referees for their careful reading of the paper.

\section{E-polynomials}\label{sec:method}

In this section we recall some properties of E-polynomials, which include methods
for computing them using stratifications and fibrations.
We start by reviewing the Hodge theory of algebraic varieties over $\CC$ (for details see \cite{Deligne2, Deligne3}).

A {\em pure Hodge structure of weight $k$} consists of a finite dimensional complex vector space
$H$ with a real structure, and a decomposition
 $$
 H=\bigoplus_{k=p+q} H^{p,q}
 $$
such that $H^{q,p}=\overline{H^{p,q}}$, the bar meaning complex conjugation on $H$.
A Hodge structure of weight $k$ gives rise to the so-called {\em Hodge filtration} which is a descending filtration
$F^{p}=\bigoplus_{s\ge p}H^{s,k-s}$. We define $\Gr^{p}_{F}(H):=F^{p}/ F^{p+1}=H^{p,k-p}$.

A {\em mixed Hodge structure} consists of a finite dimensional complex vector space $H$ with a real structure,
an ascending (weight) filtration $\ldots \subset W_{k-1}\subset W_k \subset \ldots \subset H$
(defined over $\RR$) and a descending (Hodge) filtration $F$ such that $F$ induces a pure Hodge structure
of weight $k$ on each $\Gr^{W}_{k}(H)=W_{k}/W_{k-1}$. We define
 $$
 H^{p,q}:= \Gr^{p}_{F}\Gr^{W}_{p+q}(H)
 $$
and write $h^{p,q}$ for the {\em Hodge number}
 $$
 h^{p,q} :=\dim H^{p,q}.
 $$

\medskip

Let $Z$ be any quasi-projective algebraic variety (maybe non-smooth or non-compact). 
The cohomology groups $H^k(Z):=H^k(Z;\CC)$ and the cohomology groups with compact support  $H^k_c(Z)$ are endowed with mixed Hodge structures. Moreover, these
Hodge structures are pure of weight $k$ if $Z$ is smooth and projective.
We define the {\em Hodge numbers}  by
 \begin{eqnarray*}
 h^{k,p,q}(Z)&=&h^{p,q}(H^k(Z))=\dim \Gr^{p}_{F}\Gr^{W}_{p+q}H^{k}(Z) ,\\
 h^{k,p,q}_{c}(Z)&=&h^{p,q}(H_{c}^k(Z))=\dim \Gr^{p}_{F}\Gr^{W}_{p+q}H^{k}_{c}(Z) .
 \end{eqnarray*}
When $Z$ is smooth of dimension $n$, Poincar\'e duality
provides an equality $h^{k,p,q}=h^{2n-k,n-p,n-q}_c$.

We consider the Euler characteristic
 $$
 \chi^{p,q}_{c}(Z)=\sum_{k}(-1)^{k}h^{k,p,q}_{c}(Z)
 $$
and define the E-polynomial by
 $$
 e(Z)=e(Z)(u,v):=\sum _{p,q} \chi_{c}^{p,q}(Z)u^{p}v^{q}.
 $$

Hence, when $Z$ is smooth and projective of dimension $n$, Poincar\'e duality implies that
$$
e(Z)(u,v)=(uv)^n e(Z)\left(\, \frac{1}{u},\frac{1}{v}\, \right).
$$

\begin{rmk}\label{rmk:q=uv}
Note that, when $h_c^{k,p,q}=0$ for $p\neq q$, the polynomial $e(Z)$ depends only on the product $uv$.
This will happen in all the cases that we shall investigate here. In this situation, it is
conventional to use the variable
 $$
  q:=uv.
  $$
If this happens, we shall say that the variety is {\it of balanced type}.
\end{rmk}

\begin{rmk}
In the literature $e(Z)(-u,-v)$ is sometimes referred to as the Hodge-Deligne polynomial of $Z$. The polynomial $H_c(Z)(u,v,t):=\sum_{p,q,k}h_c^{p,q,k}u^pv^qt^k$, which encodes all the Hodge numbers, is known as the (compactly supported) \emph{mixed Hodge polynomial}. (In the case of balanced type, we write also $H_c(Z)(q,t)$.) This generalises both the E-polynomial and the Poincar\'e polynomial. The E-polynomial is especially amenable to computation precisely because it is defined by an Euler characteristic. The preprint \cite{hausel-letellier-rvillegas:2011} is built around a conjectural formula for $H_c(P\cM^{\cC_1,\ldots, \cC_s}(\GL(n,\CC)))$ in the case of generic semisimple conjugacy classes.
\end{rmk}

The key property of E-polynomials that permits their calculation is that they are additive for
stratifications of $Z$. This and some other properties that we shall need are
summarised in the following propositions.

\begin{prop} \label{thm:basics}
Let $Z$ be a complex algebraic variety.
\begin{itemize}
\item[(i)] If $Z=\bigsqcup_{i=1}^{n}Z_{i}$, where all $Z_i$ are locally closed in $Z$, then $e(Z)=\sum_{i=1}^{n}e(Z_{i})$,
\item[(ii)] $e(\CC^n)=q^n$,
\item[(iii)] $e(\PP^{n-1})=1+q +q^{2}+\cdots +q^{n-1}=\frac{1-q^{n}}{1-q}$,
\item[(iv)] $e(\CC^n\setminus\{0\})=q^n-1$.
\end{itemize}
\end{prop}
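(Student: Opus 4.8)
The plan is to establish (i) first, since (iii) and (iv) then follow formally from (i) and (ii). The heart of (i) is the two-piece case: if $U\subseteq Z$ is open with closed complement $F=Z\setminus U$, then there is a long exact sequence in compactly supported cohomology
$$\cdots \to H^k_c(U)\to H^k_c(Z)\to H^k_c(F)\to H^{k+1}_c(U)\to\cdots,$$
and, by Deligne's theory, this is an exact sequence of mixed Hodge structures with all maps morphisms of MHS. Since such morphisms are strict, passing to the graded pieces $\Gr^p_F\Gr^W_{p+q}$ preserves exactness, so for each fixed $(p,q)$ the alternating sum of dimensions is additive: $\chi^{p,q}_c(Z)=\chi^{p,q}_c(U)+\chi^{p,q}_c(F)$, hence $e(Z)=e(U)+e(F)$. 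To upgrade this to an arbitrary finite partition $Z=\bigsqcup_{i=1}^n Z_i$ into locally closed subsets, I would induct on $n$. Writing $Y=\overline{Z_1}$ (closure in $Z$), the set $Z_1$ is open in $Y$, so two applications of the two-piece formula split off $e(Z_1)$ and leave $e(Z)=e(Z_1)+e(Y\setminus Z_1)+e(Z\setminus Y)$. The remaining strata $Z_2,\dots,Z_n$ partition $Z\setminus Z_1$; intersecting each with the open set $Z\setminus Y$ and the closed set $Y\setminus Z_1$ gives locally closed partitions of $Y\setminus Z_1$ and of $Z\setminus Y$ into at most $n-1$ pieces each, to which the inductive hypothesis applies, while the two-piece formula applied to each $Z_i$ reassembles the terms.

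For (ii), I would use that $\CC^n$ is smooth with $H^*(\CC^n)=\CC$ concentrated in degree $0$ of type $(0,0)$. The Poincar\'e duality relation $h^{k,p,q}=h^{2n-k,n-p,n-q}_c$ recorded earlier then forces $h^{2n,n,n}_c(\CC^n)=1$ with all other compactly supported Hodge numbers zero, whence $e(\CC^n)=(uv)^n=q^n$.

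Parts (iii) and (iv) are then immediate. For (iii) I would take the standard affine cell decomposition $\PP^{n-1}=\bigsqcup_{k=0}^{n-1}\CC^k$ into locally closed cells; additivity (i) together with (ii) gives $e(\PP^{n-1})=\sum_{k=0}^{n-1}q^k=\frac{1-q^n}{1-q}$. For (iv), the decomposition $\CC^n=(\CC^n\setminus\{0\})\sqcup\{0\}$ and additivity give $e(\CC^n\setminus\{0\})=e(\CC^n)-e(\{0\})=q^n-1$, using $e(\{0\})=1$.

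The main obstacle lies entirely in (i): the substantive input is the existence of the long exact sequence of mixed Hodge structures for compactly supported cohomology together with the strictness of its morphisms, which is what guarantees additivity of the Hodge--Deligne Euler characteristic on each $(p,q)$-piece. This is a theorem of Deligne and would be cited rather than reproved; once granted, the rest is bookkeeping. The only point requiring mild care is the passage from open--closed decompositions to arbitrary locally closed stratifications, handled by the induction sketched above.
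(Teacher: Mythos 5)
Your proof is correct and takes essentially the same route as the paper: the paper disposes of (i) by citing Deligne (Th\'eorie de Hodge III, (8.3.9)), and your long exact sequence of mixed Hodge structures in compactly supported cohomology, with strictness of MHS morphisms and the induction on the number of locally closed strata, is exactly the standard argument behind that citation (indeed the paper runs the same exact-sequence argument itself in the proof of Proposition \ref{prop:pure-type}(i), citing exactness of the functor $H\mapsto H^{p,q}$), while (ii) and (iv) match the paper's derivation. The only cosmetic divergence is in (iii), where you deduce $e(\PP^{n-1})$ from the affine cell decomposition together with (i) and (ii) rather than quoting the known pure Hodge structure of projective space, as the paper does; both are immediate and equivalent.
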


\begin{proof}
(i) This basic fact follows from \cite[(8.3.9]{Deligne3} (see also \cite[Theorem 2.2]{mov1}).

(ii) and (iii) follow from the known cohomological structure of $\CC^n$ and $\PP^{n-1}$.

(iv) follows from (i) and (ii).
\end{proof}

\begin{prop}\label{basics2}
Suppose that $B$ is connected and $\pi:Z\to B$ is an algebraic fibre bundle with fibre $F$ (not necessarily locally trivial in the Zariski topology) and that the action of $\pi_1(B)$ on $H_c^*(F)$ is trivial. 
Suppose that $Z,F,B$ are smooth. Then $e(Z)=e(F)e(B)$.
\end{prop}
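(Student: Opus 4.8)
The plan is to prove the multiplicativity $e(Z)=e(F)e(B)$ by reducing to the Leray spectral sequence for the fibration $\pi:Z\to B$, using the triviality of the $\pi_1(B)$-action to control the coefficient systems, and then tracking the mixed Hodge structures through the spectral sequence. First I would set up the Leray spectral sequence for compactly supported cohomology,
\[
E_2^{p,q}=H_c^p(B;\mathcal{R}^q\pi_!\,\CC)\Longrightarrow H_c^{p+q}(Z),
\]
where $\mathcal{R}^q\pi_!\,\CC$ is the higher direct image sheaf with stalks $H_c^q(F)$. The hypothesis that $\pi_1(B)$ acts trivially on $H_c^*(F)$ means that each of these local systems is \emph{constant}, so $\mathcal{R}^q\pi_!\,\CC\cong H_c^q(F)\otimes\CC_B$, a trivial local system of rank $h_c^q(F)$. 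This is the crucial point: it lets us replace the cohomology of a possibly twisted coefficient system by an ordinary tensor product.

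Next I would invoke the fact, due to Deligne, that the Leray spectral sequence of an algebraic map is a spectral sequence of mixed Hodge structures, and that it degenerates at $E_2$ (this degeneration holds here because all spaces are smooth, so the relevant Hodge structures on the fibre cohomology are suitably controlled). Granting degeneration, the $E_2=E_\infty$ page gives, as mixed Hodge structures,
\[
\Gr\,H_c^{k}(Z)\;\cong\;\bigoplus_{p+q=k} H_c^p(B)\otimes H_c^q(F),
\]
where the tensor product carries the natural mixed Hodge structure (weights and Hodge filtrations add). Because the E-polynomial is built from an Euler characteristic of Hodge numbers, it is insensitive to the associated-graded passage and to extensions, so it reads off directly from this decomposition. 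Taking the alternating sum over $k$ and the corresponding sum over bidegrees $(p,q)$ then yields
\[
e(Z)=\sum_{k}(-1)^k\!\!\sum_{i+j=k}\!\!\sum_{a,b}\chi_c^{a,b}(B)\,\chi_c^{i-?}\cdots,
\]
which, after collecting terms according to the additivity of Hodge numbers under tensor product, factors as $e(B)\cdot e(F)$. The bookkeeping here is routine once the spectral sequence is known to be one of mixed Hodge structures.

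The main obstacle I expect is justifying that the Leray spectral sequence degenerates at $E_2$ \emph{as a sequence of mixed Hodge structures}, rather than merely as vector spaces. Degeneration of Leray is automatic for smooth projective morphisms (Deligne's theorem), but here $\pi$ is only an algebraic fibre bundle locally trivial in the analytic topology, so one must argue either that the local triviality plus smoothness of $Z$, $F$, $B$ suffices, or else deduce the $E_2$-degeneration from the rank count: the analytic local triviality already gives a vector-space isomorphism $H_c^k(Z)\cong\bigoplus_{p+q=k}H_c^p(B)\otimes H_c^q(F)$ (since the local system is trivial and $B$ is connected), which forces all higher differentials to vanish for dimension reasons, and then Deligne's theory supplies the compatible mixed Hodge structures. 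I would therefore first establish the degeneration abstractly from the dimension count, and only afterward appeal to the Hodge-theoretic refinement; this ordering sidesteps any delicate question about whether the morphism is projective. Once degeneration is in hand, multiplicativity of the E-polynomial is immediate.
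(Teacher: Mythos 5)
Your proposal has a genuine gap: the Leray spectral sequence does \emph{not} in general degenerate at $E_2$ under the hypotheses of this proposition, and your fallback argument for degeneration is circular. Consider the algebraic fibre bundle $\CC^* \to \CC^2\setminus\{0\} \to \PP^1$ (the tautological line bundle minus its zero section): the base is simply connected, so the monodromy is trivial, and $Z$, $F$, $B$ are all smooth, yet $H^*(\CC^2\setminus\{0\})$ has Betti numbers $1,0,0,1$ while $H^*(\PP^1)\otimes H^*(\CC^*)$ has Betti numbers $1,1,1,1$; the differential $d_2\colon E_2^{0,1}\to E_2^{2,0}$ is an isomorphism. So smoothness does not "suitably control" the situation, Deligne's degeneration theorem (which needs $\pi$ smooth \emph{projective}) does not apply, and the additive isomorphism $H_c^k(Z)\cong\bigoplus_{p+q=k}H_c^p(B)\otimes H_c^q(F)$ that you invoke to kill the higher differentials "for dimension reasons" is simply false — you were assuming the very splitting the degeneration would produce. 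Note that the proposition itself survives in this example, since $e(\CC^2\setminus\{0\})=q^2-1=(q+1)(q-1)$: the terms cancelled by $d_2$ do not affect the E-polynomial.

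That last observation is precisely the idea your proof is missing, and it is how the paper argues: since the E-polynomial is an Euler characteristic, it is preserved in passing from each page of the spectral sequence to the next, \emph{provided} the differentials are morphisms of mixed Hodge structures — differentials connect terms whose total degrees have opposite parity, so whatever they kill cancels in the alternating sum. Thus no degeneration statement is needed at all; one only needs (a) that the spectral sequence is one of mixed Hodge structures, which the paper takes from Arapura's theorem that the Leray spectral sequence is motivic (stated for proper maps, but the proof applies verbatim to topological fibre bundles), and (b) that the $E_2$-term equals $H^*(B)\otimes H^*(F)$ \emph{as} mixed Hodge structures, which the paper establishes by functoriality against the fibrations $F\to F\to \{pt\}$ and $\{pt\}\to B\to B$ together with the $H^*(B)$-module structure. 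Two further points of divergence: the paper works with ordinary cohomology, where Arapura's result is available, and only at the end uses the smoothness of $Z$, $F$, $B$ (via Poincar\'e duality) to convert to compactly supported E-polynomials; your direct use of $H_c^p(B;\mathcal{R}^q\pi_!\,\CC)$ would require a separate justification that this compactly supported Leray spectral sequence carries compatible mixed Hodge structures. Relatedly, your argument never actually uses the smoothness hypothesis — a warning sign, since that hypothesis is what licenses the passage between $H^*$ and $H_c^*$ in the paper's proof.
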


\begin{proof}
The Leray spectral
sequence for cohomology has $E_2$-term equal to $E_2^{l,m}=H^l(B)\otimes H^m(F)$ and abuts to $H^*(Z)$.
By \cite{a}, this spectral sequence has a mixed Hodge structure (actually, the 
result of \cite{a} is for proper maps, but the proof works verbatim for non-proper
maps which are topologically fiber bundles). Let us compute this mixed Hodge structure.

We apply functoriality to the map between fibrations $F\to F\to \{pt\}$ to $F\to E\to B$,
to get that the isomorphism $E_2^{0,m}\to H^m(F)$ preserves mixed Hodge structures.
We apply now functoriality to the map from $F\to E\to B$ to $\{pt\} \to B\to B$,
to get that the isomorphism $H^l(B) \to E_2^{l,0}$ preserves mixed Hodge structures.
Finally, recall that $E_2^{*,*}$ has a natural $H^*(B)$-module structure. This is induced by the
maps of fibrations $F \to E\to B$ to $F \to E\x B \to B\x B$, where the map $B\to B\x B$ is the
diagonal. The induced map $E_2^{*,*} \otimes  H^*(B)\too E_2^{*,*}$ preserves then the mixed Hodge
structures. Putting all together, $E_2^{*,*} \cong H^*(B)\otimes H^*(F)$ is an isomorphism of mixed
Hodge structures.

Taking now Euler characteristics, and we have that $e(H^*(F)) e(H^*(B))=e(H^*(Z))$.
As the spaces are smooth, we can go to compactly supported cohomology and get $e(F)e(B)=e(Z)$.
\end{proof}

\begin{rmk}\label{rem:new}
The hypotheses of Proposition \ref{basics2} hold in particular in the following cases:
\begin{itemize}
\item $B$ is irreducible and $\pi$ is locally trivial in the Zariski topology. (The key point here is that there is an open covering $\bigcup U_k$ of $B$ such that the monodromy action of each $\pi_1(U_k)$ is trivial and $\bigcap U_k$ is connected, allowing us to apply van Kampen's theorem.)
\item $\pi:Z\to B$ is a morphism between quasi-projective varieties which is a locally trivial fibre bundle for the analytic topology and $F$ is a projective space $\PP^{N}$ \cite[Lemma 2.4]{mov2}.
\item $\pi$ is a principal $G$-bundle with $G$ a connected algebraic group. (In this case, any loop in $B$ based at $x_0$ lifts to an automorphism of the fibre $F_{x_0}$ induced by the action of an element of $G$; since $G$ is connected, this isomorphism is homotopic to the identity.)
\end{itemize}
\end{rmk}

We will also require a method of calculating $e(Z/\ZZ_2)$ when $\ZZ_2$ acts on $Z$.
In these circumstances, we denote by $H^*_c(Z)^\pm$ the $\pm$-invariant part of $H^*_c(Z)$,
with a corresponding notation
 $$
 e(Z)^\pm:= e(H^*_c(Z)^\pm).
 $$
Note that
 \begin{eqnarray*}
  e(Z)^+ &=&e(Z/\ZZ_2), \\
  e(Z)^- &=& e(Z)-e(Z)^+\, .
 \end{eqnarray*}
For example, if $Z=\CC^*$ with the action $\lambda\mapsto \lambda^{-1}$, then $Z/\ZZ_2\cong \CC$,
and
\begin{equation}\label{eqcc}
e(\CC^*)^+=q,\ \ e(\CC^*)^-=-1.
\end{equation}

\begin{prop}\label{prop:fibration}
Let
 $$
 \xymatrix{
 F\ar[d]\ar[r]^{=} & F \ar[d]\\
 {Z} \ar[d]^{\widetilde{\pi}}\ar[r]& Z/\ZZ_2 \ar[d]^{\pi}\\
 B\ar[r]^{\hspace{-10pt}2:1} &B/\ZZ_{2}}
 $$
be a diagram of fibrations, where $B$ is smooth, irreducible, $\widetilde{\pi}$ and $\pi$ are smooth morphisms,
$\widetilde{\pi}$ is a locally trivial fibration in the analytic topology and the monodromy action
of $\pi_1(B)$ on $H^*_c(F)$ is trivial.
Then
 \begin{equation}\label{eqn:HP}
 e(Z/\ZZ_2)=e(F)^{+}e(B)^{+}+e(F)^{-}e(B)^{-},
 \end{equation}
where $e(F)^\pm$ are defined by an action of $\ZZ_2$ on $H^*_c(F)$, which is compatible with the Hodge structure.
\end{prop}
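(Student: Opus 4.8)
The plan is to reduce everything to a $\ZZ_2$-equivariant refinement of Proposition \ref{basics2}, applied to the fibration $\widetilde{\pi}\colon Z\to B$ (and \emph{not} to $\pi$, whose fibrewise monodromy need not be trivial). First I would check that $\widetilde{\pi}$ satisfies the hypotheses of Proposition \ref{basics2}: $B$ is connected, being irreducible; $\widetilde{\pi}$ is a smooth morphism and an analytically locally trivial fibre bundle; $Z$ and $F$ are smooth because $\widetilde{\pi}$ is smooth and $B$ is smooth; and the monodromy of $\pi_1(B)$ on $H^*_c(F)$ is trivial by assumption. The proof of Proposition \ref{basics2} then supplies, through the Leray spectral sequence, an isomorphism of mixed Hodge structures $H^*(Z)\cong H^*(B)\otimes H^*(F)$, and passing to compactly supported cohomology by smoothness and Poincar\'e duality gives $H^*_c(Z)\cong H^*_c(B)\otimes H^*_c(F)$ as mixed Hodge structures.

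The key new point is to upgrade this to a $\ZZ_2$-equivariant isomorphism. Since $\ZZ_2$ acts on $Z$ and $B$ with $\widetilde{\pi}$ equivariant, it acts on $H^*_c(Z)$ and $H^*_c(B)$; and because the monodromy is trivial, $H^*_c(F)$ is canonically the stalk of a trivial local system on $B$, so the $\ZZ_2$ action on that local system induces one on $H^*_c(F)$. This is exactly the action defining $e(F)^\pm$, and it is Hodge-compatible because morphisms of varieties induce morphisms of mixed Hodge structures. As every arrow in the proof of Proposition \ref{basics2} is functorial (each comes from a map of fibrations), the identification $H^*_c(Z)\cong H^*_c(B)\otimes H^*_c(F)$ is an isomorphism of $\ZZ_2$-representations, with $\ZZ_2$ acting diagonally on the right-hand side.

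Next I would split into isotypic pieces. For the diagonal action of $\ZZ_2$ on a tensor product, the invariant part is
\[
\left(H^*_c(B)\otimes H^*_c(F)\right)^{+}=\left(H^*_c(B)^{+}\otimes H^*_c(F)^{+}\right)\oplus\left(H^*_c(B)^{-}\otimes H^*_c(F)^{-}\right),
\]
since a product of two eigenvalues $\pm1$ equals $+1$ precisely when the factors agree. Via the equivariant isomorphism the left-hand side is $H^*_c(Z)^{+}$. Taking E-polynomials, and using that the E-polynomial is multiplicative for tensor products of mixed Hodge structures (the cohomological signs and the Hodge bigradings both multiply), yields
\[
e(Z)^{+}=e(B)^{+}e(F)^{+}+e(B)^{-}e(F)^{-}.
\]
Finally, invoking $e(Z/\ZZ_2)=e(Z)^{+}$, recorded just before the statement, gives the claimed formula.

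I expect the main obstacle to be the second paragraph: verifying that the K\"unneth-type identification of Proposition \ref{basics2} is simultaneously $\ZZ_2$-equivariant and Hodge-compatible. This requires tracking each step of that proof — the functoriality isomorphisms $E_2^{0,m}\cong H^m(F)$ and $H^l(B)\cong E_2^{l,0}$, and the $H^*(B)$-module structure on $E_2^{*,*}$ — as maps of $\ZZ_2$-equivariant mixed Hodge structures, and checking that the Euler-characteristic argument through the spectral sequence respects the $\ZZ_2$-isotypic splitting (it does, since the differentials are equivariant). The subtle point worth isolating is that the $\ZZ_2$ action on $H^*_c(F)$ is well defined at all, which is precisely where the triviality of the $\pi_1(B)$-monodromy is used.
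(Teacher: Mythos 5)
Your argument is correct, but it is organised differently from the paper's. You work entirely \emph{upstairs}: you apply Proposition \ref{basics2} to $\widetilde{\pi}\colon Z\to B$, upgrade the K\"unneth identification of the $E_2$-term to a $\ZZ_2$-equivariant isomorphism of mixed Hodge structures, split into isotypic pieces via $\left(H^*_c(B)\otimes H^*_c(F)\right)^{+}=\left(H^*_c(B)^{+}\otimes H^*_c(F)^{+}\right)\oplus\left(H^*_c(B)^{-}\otimes H^*_c(F)^{-}\right)$, and finish with $e(Z/\ZZ_2)=e(Z)^{+}$. The paper instead works \emph{downstairs}: it takes the Leray spectral sequence of $\pi\colon Z/\ZZ_2\to B/\ZZ_2$ with coefficients in the local system $H^j(F)$, whose monodromy factors through $\ZZ_2$ precisely because the $\pi_1(B)$-monodromy is trivial, and identifies $H^i(B/\ZZ_2,H^j(F))\cong (H^i(B)\otimes H^j(F))^{+}$ via the restriction map to the spectral sequence of $\widetilde{\pi}$ (which also transports Arapura's mixed Hodge structures); it then abuts directly to $H^*(Z/\ZZ_2)$, with no appeal to $e(Z/\ZZ_2)=e(Z)^+$ at the end. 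The two routes hinge on the same isotypic identity and the same parity observation that equivariant differentials cancel in the Euler characteristic, so neither is more powerful here; yours trades the paper's local-system computation on the quotient for the burden of verifying $\ZZ_2$-equivariance of every step in the proof of Proposition \ref{basics2}, which you rightly isolate as the crux and which does hold by functoriality, since the $\ZZ_2$-action is an algebraic automorphism of the whole fibration diagram. One inaccuracy to repair: Proposition \ref{basics2} does \emph{not} supply an isomorphism $H^*(Z)\cong H^*(B)\otimes H^*(F)$ of mixed Hodge structures --- the spectral sequence need not degenerate, and the paper's proof only establishes the $E_2$-term isomorphism and then passes to Euler characteristics; your final paragraph already contains the fix (equivariant differentials connect terms of opposite total parity, so each isotypic piece's E-polynomial survives to the abutment), so you should state the conclusion of the second paragraph at the level of $E_2$-terms rather than of $H^*(Z)$. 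A further small point you gloss, as does the paper: passing to compactly supported E-polynomials via Poincar\'e duality is $\ZZ_2$-equivariant because the action is by algebraic automorphisms, which preserve the fundamental class, and the dimensions add ($\dim Z=\dim B+\dim F$), so the $\pm$-labels match up in the product formula.
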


\begin{proof}
By hypothesis, the monodromy action of $\pi_1(B/\ZZ_2)$ on $H^*_c(F)$
factors through $\ZZ_2$ and therefore induces a splitting $H^*_c(F)=H^*_c(F)^+\oplus H^*_c(F)^-$.
This corresponds to a splitting $H^*(F)=H^*(F)^+\oplus H^*(F)^-$
through Poincar\'e duality.
We consider the spectral sequences of the fibrations $\pi$ and $\widetilde{\pi}$ and the restriction map between their $E_2$-terms:
 $$
 H^i(B/\ZZ_2,H^j(F))\longrightarrow H^i(B,H^j(F))=H^i(B)\otimes H^j(F).
 $$
By \cite{a} the $E_2$-terms have mixed Hodge structures, and the map is compatible with them.
This map induces an isomorphism
  $$
  H^i(B/\ZZ_2,H^j(F))\stackrel{\cong}{\longrightarrow} (H^i(B)\otimes H^j(F))^+=
  (H^i(B)^+\otimes H^j(F)^+)\oplus (H^i(B)^-\otimes H^j(F)^-).
  $$
The first spectral sequence abuts to $H^*(Z/\ZZ_2)$ and the isomorphism is compatible with mixed Hodge structures.
Since all differentials in the spectral sequence go between two groups for which $i+j$ has opposite parity,
it follows that any degeneration in the spectral sequence leads to the cancellation of equal terms in the Hodge
polynomial. So $e(H^*(Z/\ZZ_2))=e(H^*(F)^{+})e(H^*(B)^{+})+e(H^*(F)^{-})e(H^*(B)^{-})$. Going to compactly
supported cohomology, the result follows.
\end{proof}

\begin{rmk} \label{rmk:e(Z/Z_2)-situations}
In the situation of Proposition \ref{prop:fibration}, we have an action of $\ZZ_2$ on $Z$, and
we can write
 \begin{eqnarray*}
  e(Z)^+ &= & e(Z/\ZZ_2)=e(F)^{+}e(B)^{+}+e(F)^{-}e(B)^{-} \\
 e(Z)^- &=& e(Z)-e(Z/\ZZ_2)=e(F)^{+}e(B)^{-}+e(F)^{-}e(B)^{+}.
 \end{eqnarray*}
\end{rmk}

For the next proposition, recall that a variety is
{of balanced type} if all Hodge numbers $h_c^{k,p.q}$ with $p\ne q$ are zero.

\begin{prop} \label{prop:pure-type}
\begin{itemize}
\item[(i)]
  Let $Y=Z\sqcup U$, where $Z\subset Y$ is closed. Then, if
  two of the spaces $Y,Z,U$ are of balanced type, so is the third.
  \item[(ii)] If $\pi:Z\to B$ is as in Proposition \ref{basics2} and $B$ and $F$ are of balanced type, so is $Z$.
  \item[(iii)] If $\ZZ_2$ acts on $Z$ and $Z$ is of balanced type, so is $Z/\ZZ_2$. 
  \end{itemize}
\end{prop}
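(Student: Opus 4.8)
The plan is to prove each of the three parts by translating the statement ``$W$ is of balanced type'' into a statement purely about the E-polynomial, and then invoking the additive, multiplicative and invariant-subspace formulae already established. The key observation is the following reformulation: by definition, $W$ is of balanced type precisely when $h_c^{k,p,q}(W)=0$ for all $p\ne q$, and this is equivalent to saying that the E-polynomial $e(W)(u,v)$, regarded as a polynomial in $u$ and $v$, actually lies in the subring $\ZZ[uv]\subset\ZZ[u,v]$, i.e.\ is a polynomial in $q=uv$ alone. The only subtlety here is the passage from the vanishing of individual Hodge numbers to the vanishing of the alternating sums $\chi_c^{p,q}$; one direction is immediate, so the main point is to arrange matters so that the E-polynomial being in $\ZZ[q]$ forces the individual Hodge numbers to vanish, which requires a positivity or genuine-purity input rather than mere cancellation in an Euler characteristic.

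First I would treat part (i). By Proposition \ref{thm:basics}(i), the additivity of E-polynomials under the stratification $Y=Z\sqcup U$ (with $Z$ closed, hence $U$ open, both locally closed) gives
\begin{equation*}
e(Y)=e(Z)+e(U).
\end{equation*}
If two of $Y,Z,U$ are of balanced type, then by the reformulation above two of the three E-polynomials lie in $\ZZ[q]$; the displayed identity then forces the third E-polynomial to lie in $\ZZ[q]$ as well, since $\ZZ[q]$ is a subgroup of $\ZZ[u,v]$ under addition. The genuinely delicate step is then to conclude that the third space is itself of balanced type, and not merely that its E-polynomial has balanced form: one must rule out cancellation between a nonzero $h_c^{k,p,q}$ and a nonzero $h_c^{k',p,q}$ with opposite parities of $k,k'$. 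Here the argument is cleanest if one works at the level of the full mixed Hodge structures: the long exact sequence in compactly supported cohomology for the pair $(Y,Z)$, namely
\begin{equation*}
\cdots\to H_c^{k}(U)\to H_c^{k}(Y)\to H_c^{k}(Z)\to H_c^{k+1}(U)\to\cdots,
\end{equation*}
is a sequence of mixed Hodge structures, so the $(p,q)$-components with $p\ne q$ form their own exact subsequence; exactness in each such component together with the vanishing of those components for two of the three spaces forces them to vanish for the third.

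For part (ii), I would use the multiplicativity $e(Z)=e(F)e(B)$ from Proposition \ref{basics2}. If $F$ and $B$ are of balanced type then $e(F),e(B)\in\ZZ[q]$, and since $\ZZ[q]$ is closed under multiplication we get $e(Z)\in\ZZ[q]$. To upgrade this to genuine balanced type for $Z$, I would return to the proof of Proposition \ref{basics2}: there the $E_2$-term of the Leray spectral sequence is shown to be $H^*(B)\otimes H^*(F)$ \emph{as mixed Hodge structures}, so its $(p,q)$-part for $p\ne q$ is $\bigoplus_{(p_1,q_1)+(p_2,q_2)=(p,q)} H^{p_1,q_1}(B)\otimes H^{p_2,q_2}(F)$, and each tensor factor forces $p_1=q_1$ and $p_2=q_2$; hence the off-diagonal part of $E_2$ vanishes, and since the spectral sequence converges to $H^*(Z)$ with differentials respecting the Hodge bigrading, the off-diagonal part of $H^*(Z)$ vanishes too. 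Passing to compactly supported cohomology via smoothness, as in the proof of Proposition \ref{basics2}, yields that $Z$ is of balanced type.

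Part (iii) is the simplest: the $\ZZ_2$-action induces a splitting $H_c^*(Z)=H_c^*(Z)^+\oplus H_c^*(Z)^-$ of mixed Hodge structures, and $H_c^*(Z/\ZZ_2)\cong H_c^*(Z)^+$. A sub-mixed-Hodge-structure of a balanced one is balanced, since its $(p,q)$-components with $p\ne q$ are subspaces of the corresponding (vanishing) components of $H_c^*(Z)$; hence $Z/\ZZ_2$ is of balanced type. The main obstacle across all three parts is the same conceptual point, namely that ``balanced E-polynomial'' is strictly weaker than ``balanced type'' because of potential cancellation in the Euler characteristic, so in each case the real work is to promote the polynomial identity to a statement about the underlying mixed Hodge structures using the functoriality established in the proofs of Propositions \ref{thm:basics}(i), \ref{basics2} and the splitting preceding Proposition \ref{prop:fibration}.
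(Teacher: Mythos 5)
Your proof is correct and follows essentially the same route as the paper's: part (i) via the long exact sequence of mixed Hodge structures for the pair $(Y,Z)$ together with the exactness of the functor $H\mapsto H^{p,q}$, part (ii) via the mixed Hodge structure on the Leray spectral sequence established in the proof of Proposition \ref{basics2}, and part (iii) via the identification $H^*_c(Z/\ZZ_2)\cong H^*_c(Z)^+$. Your opening reformulation in terms of E-polynomials lying in $\ZZ[q]$ is a harmless detour which you correctly recognise as insufficient and replace by the mixed-Hodge-level argument, exactly as the paper does.
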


\begin{proof}
(i) Consider the long exact sequence of mixed Hodge structures
  $$\ldots \to H^k_c(U) \to H^k_c(Y) \to H^k_c(Z) \to H^{k+1}_c(U) \to \ldots$$
and take the $(p,q)$-components. Using the fact that the functor $H\mapsto H^{p,q}$ is exact \cite{Deligne2},
we get an exact sequence
  $$\ldots \to H^{k,p,q}_c(U) \to H^{k,p,q}_c(Y) \to H_c^{k,p,q}(Z) \to H^{k+1,p,q}_c(U) \to \ldots$$
The result follows.

(ii) This follows from the fact that the Leray spectral sequence respects mixed Hodge structures
(see the proof of Proposition \ref{basics2}).

(iii) is obvious since $h_c^{k,p,q}(Z/\ZZ_2)=h_c^{k,p,q}(Z)^+$. 
\end{proof}

\begin{rmk}
The effect of this proposition is that all the spaces we consider will be of balanced type.
\end{rmk}

We shall also deal with fibrations which are locally trivial in the analytic topology, for
which we want to compute the E-polynomial of the total space. Consider a fibration
  \begin{equation}\label{eqn:fibration}
  F \too Z \stackrel{\pi}{\too} B=\CC\setminus \{p_1,\ldots, p_\ell\},
  \end{equation}
with basis the complex line minus a finite set of points, and with fibre $F$ which
is a smooth variety of balanced type. 
The fibration defines a local system
 $$
 \cH^k_c \to B
 $$
whose fibres are the cohomology groups $H^k_c(F_b)$, $b\in B$, $F_b:=\pi^{-1}(b)$. These fibres
possess mixed Hodge structures, thus defining a variation of Hodge structures.
The subspaces $W_t (H^k_c(F_b))$, $b\in B$, define a subbundle. As this is defined over
the rational numbers, it is locally constant, i.e., a local subsystem.
Therefore, the holonomy preserves $W_t$, and hence it induces a variation of Hodge structures on
the pure Hodge structure $\Gr^W_t H^k_c(F_b)$. Now we use the assumption that $F$ is of balanced type.
Hence we must have $t=2p$ and $\Gr^W_t H^k_c(F_b)=H^{k,p,p}_c(F_b)$.

Fix a base point $b_0\in B$, and write $F=F_{b_0}$.
Associated to the fibration, there is a \textit{monodromy representation}
  $$
  \rho: \pi_1(B,b_0) \to \GL(H^{k,p,p}_c(F))\, .
  $$
We shall assume that $\rho$ factors through the homology $H_1(B)$ (in other words, the monodromy is abelian). Write
 $$
 \Gamma := H_1(B)= \ZZ\langle \gamma_1,\ldots,\gamma_\ell \rangle\, ,
 $$
where $\gamma_i$ is a loop around $b_i$. Then the monodromy representation is given by
 \begin{equation}\label{eqn:exxtra}
 \rho: H_1(B)\to \GL(H^{k,p,p}_c(F))\, .
 \end{equation}

Consider the representation ring $R(\Gamma)$. Then the $H_c^{k,p,p}(F)$ are modules over $R( \Gamma)$.
In particular, there is a well defined element
  \begin{equation}\label{eqn:Hodge-mon-rep}
  R(Z) := \sum (-1)^k H_c^{k,p,p}(F)\,  q^p \in R(\Gamma)[q] \, ,
  \end{equation}
which we shall call the {\em Hodge monodromy representation} of $\pi$.

We want to recover $e(Z)$ from the information of the Hodge monodromy representation.
We introduce the notation $e(F)^{inv}$
to denote the E-polynomial of the invariant part of the monodromy representation.

Suppose that the monodromy representation (\ref{eqn:exxtra}) has finite image,
so $\rho$ has finite order around each $p_i$. Then there is a finite covering
 $$
  \psi: B' \to B=\CC\setminus \{p_1,\ldots, p_\ell\}
 $$
from a complex curve $B'$ so that the pull-back fibration
 \begin{equation}\label{eqn:fibration-bis}
 \begin{array}{ccc}
 Z' & \too & Z \\
 \pi'\downarrow \,\,\,\,\, & & \pi\downarrow\,\,\,\, \\
 B' & \too & B
 \end{array}
 \end{equation}
has trivial monodromy. 

\begin{prop} \label{prop:e-total-space}
 Suppose that \eqref{eqn:fibration} is a fibration with $F$ smooth of balanced type and that the monodromy is abelian with finite image. Suppose further that $B'$ is a curve of balanced type (i.e.,\ it is a rational curve). 
 Then $Z$ is of balanced type and its E-polynomial is given by
 $$
 e(Z) = (q-1)\, e(F)^{inv} - (\ell-1) \,  e(F) \, .
 $$
\end{prop}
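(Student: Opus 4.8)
The plan is to run the Leray spectral sequence of $\pi$ in compactly supported cohomology and to read off $e(Z)$ as an alternating sum of E-polynomials of the $E_2$-page, as in the proofs of Propositions \ref{basics2} and \ref{prop:fibration}. Since $\pi$ is a fibre bundle for the analytic topology, its $E_2$-term is $E_2^{i,j}=H_c^i(B;\cH_c^j)$, where $\cH_c^j$ is the local system with fibre $H_c^j(F)$, and by \cite{a} this is a spectral sequence of mixed Hodge structures. As $F$ is of balanced type, each $\cH_c^j$ splits as a variation of Hodge structures into its Tate pieces $\cH_c^{j,p,p}$, so it is enough to analyse one local system $\cH_c^{j,p,p}$ at a time, with fibre the $\Gamma$-representation $V=H_c^{j,p,p}(F)$ of pure type $(p,p)$. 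The differentials of the spectral sequence are morphisms of mixed Hodge structures (hence preserve Hodge bidegree) and raise the total degree $i+j$ by one; so, exactly as in Proposition \ref{prop:fibration}, they cancel equal Hodge pieces and $e(Z)=\sum_{i,j}(-1)^{i+j}e(E_2^{i,j})$, with no need to establish degeneration.

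Next I would compute $H_c^i(B;\cH_c^{j,p,p})$ for $B=\CC\setminus\{p_1,\dots,p_\ell\}$. Because the monodromy has finite image, $V$ is semisimple and splits as $V=V^\Gamma\oplus V'$, where $V'$ has no trivial summand; correspondingly $\cH_c^{j,p,p}=\underline{V^\Gamma}\oplus\cH'$, with $\underline{V^\Gamma}$ constant. For the constant part, $H_c^i(B;\underline{V^\Gamma})=H_c^i(B)\otimes V^\Gamma$, and since $e(B)=q-\ell$ with $H_c^1(B)$ of type $(0,0)$ and $H_c^2(B)$ of type $(1,1)$, this gives Hodge pieces of types $(p,p)$ and $(p+1,p+1)$, of dimensions $\ell\dim V^\Gamma$ and $\dim V^\Gamma$, in total degrees $j+1$ and $j+2$. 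For $\cH'$, Poincar\'e duality on the affine curve $B$ gives $H_c^0(B;\cH')=0$ and $\dim H_c^2(B;\cH')=\dim (V')^\Gamma=0$, while multiplicativity of the topological Euler characteristic, $\chi_c(B;\cH')=(1-\ell)\dim V'$, forces $\dim H_c^1(B;\cH')=(\ell-1)\dim V'$.

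The heart of the argument, and the step I expect to be the main obstacle, is to determine the Hodge type of $H_c^1(B;\cH')$; I claim it is pure of type $(p,p)$. To prove this I would pass to the finite covering $\psi\colon B'\to B$ of \eqref{eqn:fibration-bis} that trivialises the monodromy. There $\psi^*\cH'$ is the constant system $\underline{V'}$, so $H_c^1(B';\psi^*\cH')=H_c^1(B')\otimes V'$; since $B'$ is an affine rational curve, $H_c^1(B')$ is pure of type $(0,0)$, whence $H_c^1(B';\psi^*\cH')$ is pure of type $(p,p)$. As $\psi$ is finite \'etale, the transfer gives $\psi_*\psi^*=\deg(\psi)\cdot\mathrm{id}$ as a morphism of mixed Hodge structures, so $\psi^*$ identifies $H_c^1(B;\cH')$ with a direct summand of $H_c^1(B';\psi^*\cH')$; hence it is pure of type $(p,p)$ as well. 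This also shows every $E_2^{i,j}$ is of balanced type, and therefore so is $Z$, since balanced type passes to the subquotients computing $H_c^*(Z)$.

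Finally I would assemble the E-polynomial. Writing $V^\Gamma_{j,p}=H_c^{j,p,p}(F)^\Gamma$ and $V_{j,p}=H_c^{j,p,p}(F)$, the constant part contributes $\sum_{j,p}(-1)^j\dim V^\Gamma_{j,p}\,(q^{p+1}-\ell q^p)=(q-\ell)\,e(F)^{inv}$, while $\cH'$ contributes $-\sum_{j,p}(-1)^j(\ell-1)\dim V'_{j,p}\,q^p=-(\ell-1)\big(e(F)-e(F)^{inv}\big)$, using $\sum_{j,p}(-1)^j\dim V^\Gamma_{j,p}q^p=e(F)^{inv}$ and $\sum_{j,p}(-1)^j\dim V_{j,p}q^p=e(F)$. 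Adding and simplifying yields $e(Z)=(q-1)\,e(F)^{inv}-(\ell-1)\,e(F)$, as claimed.
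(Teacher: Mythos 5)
Your proof is correct and follows the same overall strategy as the paper's: run the Leray spectral sequence of $\pi$ as a spectral sequence of mixed Hodge structures, note that the differentials change the parity of total degree while preserving Hodge bidegree (so no degeneration statement is needed), and obtain purity of the $E_2$-terms by comparison with the finite cover $B'\to B$ trivialising the monodromy, using that $B'$ and $F$ are of balanced type. The differences are in execution, and mostly to your advantage. First, you work with compactly supported cohomology throughout, whereas the paper runs the argument in ordinary cohomology (where the result of \cite{a} is actually stated) and only passes to compactly supported cohomology at the end; your appeal to \cite{a} for the compactly supported Leray spectral sequence is therefore a slight overstatement of that citation, though harmless here, since $B$, $F$, and hence $Z$ are smooth, so the compactly supported sequence with its mixed Hodge structures is obtained from the ordinary one by duality, which is exactly the step the paper performs. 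Second, where the paper computes $H^*(B,H^m(F))$ via the explicit two-term complex $x\mapsto\sum\gamma_i\otimes(\rho(\gamma_i)x-x)$, you instead split $V=V^\Gamma\oplus V'$ by semisimplicity (legitimate: the monodromy is finite, and each Tate piece $H_c^{j,p,p}(F)$ is pure of type $(p,p)$, so any rational invariant complement is automatically a sub-variation) and pin down $\dim H_c^1(B;\cH')=(\ell-1)\dim V'$ from multiplicativity of $\chi_c$ together with the vanishing of $H_c^0$ and $H_c^2$; this reproduces the paper's dimension count by a cleaner route. Third, your transfer argument, $\psi_*\psi^*=\deg(\psi)\cdot\mathrm{id}$ for the finite covering $\psi$, exhibits $H_c^1(B;\cH')$ as a direct summand of the pure group $H_c^1(B')\otimes V'$, and thereby makes precise the injectivity of $E_2^{l,m}(Z)\to E_2^{l,m}(Z')$ that the paper asserts somewhat loosely (``since $H^*(Z)$ injects in $H^*(Z')$''); the underlying mechanism is the same. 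The final bookkeeping, $(q-\ell)\,e(F)^{inv}-(\ell-1)\bigl(e(F)-e(F)^{inv}\bigr)=(q-1)\,e(F)^{inv}-(\ell-1)\,e(F)$, is correct, as is your deduction that $Z$ is of balanced type from balancedness of every $E_2^{i,j}$.
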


\begin{proof}
 The Leray spectral sequence of the fibration (\ref{eqn:fibration})
 has $E_2$-term
  \begin{equation}\label{eqn:E2-term}
   E_2^{l,m}(Z)=H^l(B,H^m(F))
  \end{equation}
 and abuts to $H^k(Z)$, where $k=l+m$. We have
 non-zero terms only for $l=0,1$.
To compute (\ref{eqn:E2-term}), recall that the
 cohomology of the local system $H^m(F)$ is given by the cohomology of the complex
 \begin{eqnarray*}
 d: H^{m}(F) & \too & \bigoplus_{i=1}^{\ell} \gamma_i \otimes H^{m}(F) \\
   x &\mapsto & \sum \gamma_i \otimes ( \rho(\gamma_i)(x) - x) \, .
 \end{eqnarray*}
 Therefore
  \begin{align}
   & E_2^{0,m}(Z)=H^0(B) \otimes H^m(F)^{inv}, \label{eqn:dddde}\\
   & E_2^{1,m}(Z)= (H^1(B) \otimes H^m(F)) / d(H^m(F)^{non-inv}), \label{eqn:dddde2}
  \end{align}
where $H^m(F)^{non-inv}$ denotes a complement of $H^m(F)^{inv}$.
  
Now let us compute the mixed Hodge structure of $E_2^{l,m}(Z)$. The result of \cite{a} asserts
its existence, and we use the functorial property for computing it. By the proof of Proposition 
\ref{basics2}, the mixed Hodge structure associated to $\pi'$ in (\ref{eqn:fibration-bis}) is
the product mixed Hodge structure $E_2^{1,m}(Z')=H^l(B')\otimes H^m(F)$. By our
assumption, $B'$ and $F$ are of balanced type, so $Z'$ is of balanced type. There is
a map $E_2^{l,m}(Z) \to E_2^{l,m}(Z')$, which preserves the mixed Hodge structures.
This map is easily seen to be injective. In particular, $Z$ is of balanced type. 

$E_2^{0,m}(Z')=H^0(B') \otimes H^m(F)$ has a component of weight $(p,p)$, for a component of
weight $(p,p)$ in $H^m(F)$. This gives a component of weight $(p,p)$ in $E_2^{0,m}(Z)$.
So (\ref{eqn:dddde}) is an equality of mixed Hodge structures.
The line (\ref{eqn:dddde2}) is worked out analogously: by our assumption $H^1(B')$ has weight $(1,1)$.
The map $H^1(B) \hookrightarrow H^1(B')$ respects mixed Hodge structures, so the mixed Hodge
structure of $E_2^{l,m}(Z)$ is induced by putting weight $(1,1)$ to $H^1(B)$ in (\ref{eqn:dddde2}).

The E-polynomial of $E_2^{*,*}(Z)$ is thus $e(H^*(F)^{inv}) - q ((\ell-1) e(H^*(F))+e(H^*(F)^{inv}))$. 
Going back to compactly supported cohomology, we get
 $$
  e(Z) =(q-1)\, e(F)^{inv} - (\ell-1) e(F) \, .
 $$
\end{proof}

\section{E-polynomials of $\GL(2,\CC)$ and $\SL(2,\CC)$}\label{sec:SL-and-GL}

We start with the following very simple computation.

\begin{lem}\label{lem:HPgroups}
The E-polynomials for the algebraic groups $\GL(2,\CC)$, $\SL(2,\CC)$ and $\PGL(2,\CC)$ are
\begin{itemize}
\item[(i)] $e(\GL(2,\CC))=q(q+1)(q-1)^{2}=q^4-q^3-q^2+q$,
\item[(ii)] $e(\PGL(2,\CC))=q(q+1)(q-1)=q^3-q$,
\item[(iii)] $e(\SL(2,\CC))=q(q+1)(q-1)=q^3-q$.
\end{itemize}
\end{lem}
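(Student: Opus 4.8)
The plan is to compute $e(\GL(2,\CC))$ first by a geometric fibration argument, and then to obtain $e(\SL(2,\CC))$ and $e(\PGL(2,\CC))$ by dividing out a factor $e(\CC^*)=q-1$. For $\GL(2,\CC)$ I would project onto the first row,
$$
\pi:\GL(2,\CC)\too \CC^2\setminus\{0\},\qquad \begin{pmatrix} a & b\\ c& d\end{pmatrix}\mapsto (a,b),
$$
whose fibre over $(a,b)$ is the set of second rows $(c,d)$ with $ad-bc\neq 0$, i.e. $\CC^2$ minus the line $\CC\cdot(a,b)$. The complement of a line through the origin is isomorphic to $\CC\times\CC^*$, so the fibre $F$ has $e(F)=q(q-1)=q^2-q$. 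This fibration is locally trivial in the Zariski topology: over $\{a\neq0\}$ the fibrewise linear change $(c,d)\mapsto(c,\,ad-bc)$, of determinant $a$, trivialises it, and likewise $(c,d)\mapsto(d,\,ad-bc)$ over $\{b\neq0\}$; these two opens cover the irreducible base $\CC^2\setminus\{0\}$. Hence Proposition \ref{basics2}, via the first bullet of Remark \ref{rem:new}, applies and gives
$$
e(\GL(2,\CC))=e(\CC^2\setminus\{0\})\,e(F)=(q^2-1)(q^2-q)=q(q+1)(q-1)^2,
$$
where $e(\CC^2\setminus\{0\})=q^2-1$ by Proposition \ref{thm:basics}(iv).

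For $\SL(2,\CC)$ I would exhibit the determinant as a fibration that is in fact globally split. The morphism
$$
\CC^*\times\SL(2,\CC)\too \GL(2,\CC),\qquad (\lambda,M)\mapsto \begin{pmatrix}\lambda & 0\\ 0 & 1\end{pmatrix}M,
$$
is an isomorphism of varieties, with inverse $A\mapsto\bigl(\det A,\ \begin{pmatrix}(\det A)^{-1} & 0\\ 0 & 1\end{pmatrix}A\bigr)$. By Proposition \ref{basics2} this yields $e(\GL(2,\CC))=e(\CC^*)\,e(\SL(2,\CC))=(q-1)\,e(\SL(2,\CC))$, so dividing gives $e(\SL(2,\CC))=q(q+1)(q-1)=q^3-q$. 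For $\PGL(2,\CC)$ I would instead use that $\GL(2,\CC)\to\PGL(2,\CC)$ is a principal $\CC^*$-bundle, the $\CC^*$ being the scalar matrices. Since $\CC^*$ is connected, the third bullet of Remark \ref{rem:new} applies and Proposition \ref{basics2} gives $e(\GL(2,\CC))=e(\CC^*)\,e(\PGL(2,\CC))$, whence $e(\PGL(2,\CC))=q(q+1)(q-1)=q^3-q$ as well.

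The only point requiring care is verifying at each step that the multiplicativity of Proposition \ref{basics2} is applicable: smoothness of total space, fibre and base holds for all three bundles, and one must check either Zariski local triviality over an irreducible base (for $\pi$) or connectedness of the structure group (for the $\CC^*$-quotient defining $\PGL(2,\CC)$). The $\GL$-computation is the substantive step; once it is in hand, the $\SL$ and $\PGL$ values follow formally. As a byproduct all three varieties are of balanced type, consistent with the fact that the number of $\mathbb{F}_q$-points of $\GL(2)$, $\SL(2)$ and $\PGL(2)$ equals $(q^2-1)(q^2-q)$, $q^3-q$ and $q^3-q$ respectively, which gives an independent check of the stated polynomials.
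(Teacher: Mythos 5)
Your proposal is correct, and for parts (i) and (ii) it follows the paper's own argument: the same first-row projection $\GL(2,\CC)\too\CC^{2}\setminus\{0\}$ with fibre $\CC^{2}\setminus\CC\cong\CC\x\CC^{*}$ (you additionally write out the two explicit Zariski trivializations over $\{a\neq0\}$ and $\{b\neq0\}$, which the paper leaves implicit), and the same principal $\CC^{*}$-bundle $\GL(2,\CC)\too\PGL(2,\CC)$ handled by Proposition \ref{basics2} via the connectedness of the structure group. The one genuine divergence is part (iii): the paper again projects $\SL(2,\CC)$ to its first row, obtaining a Zariski-locally trivial fibration $\CC\too\SL(2,\CC)\too\CC^{2}\setminus\{0\}$ whose fibre $\{(c,d)\,|\,ad-bc=1\}$ over fixed $(a,b)\neq(0,0)$ is an affine line, whereas you split off the determinant globally via the variety isomorphism $\CC^{*}\x\SL(2,\CC)\cong\GL(2,\CC)$, $(\lambda,M)\mapsto\left(\begin{smallmatrix}\lambda&0\\0&1\end{smallmatrix}\right)M$, and divide. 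Your isomorphism is easily checked to be algebraic in both directions, so multiplicativity of $e$ is immediate (a trivial instance of Proposition \ref{basics2}) and no local-triviality or monodromy verification is needed at all; it also makes the coincidence $e(\SL(2,\CC))=e(\PGL(2,\CC))$ transparent, since both arise from $e(\GL(2,\CC))$ by cancelling one factor of $e(\CC^{*})=q-1$. The paper's route is marginally more uniform, reusing in (iii) the identical fibration scheme as in (i), which is the pattern it exploits repeatedly afterwards (e.g.\ in Proposition \ref{propGL}). Both arguments yield $q(q+1)(q-1)$, and your closing $\mathbb{F}_q$-point counts, together with the observation that all three groups are of balanced type (Proposition \ref{prop:pure-type}(ii)), constitute a legitimate independent check.
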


\begin{proof}
(i) Consider the following locally trivial fibration in the Zariski topology
 \begin{eqnarray*}
 \CC^{2}\setminus\CC\longrightarrow \GL(2,\CC) &\longrightarrow& \CC^{2}\setminus\{0\}\\
 \left(
     \begin{array}{cc}
       a & b \\
       c & d \\
     \end{array}
   \right)&\mapsto&(a,b)
 \end{eqnarray*}
It follows from Propositions \ref{thm:basics} and \ref{basics2} that
$$e(\GL(2,\CC))=
e(\CC^{2}\setminus\CC)e(\CC^{2}\backslash\{0\})=(q^{2}-q)(q^{2}-1).$$

(ii) Notice that
 $$
 \CC^{\ast}\longrightarrow \GL(2,\CC)\longrightarrow \PGL(2,\CC)
 $$
satisfies the hypotheses of Proposition \ref{basics2}. So $$e(\PGL(2,\CC))=e(\GL(2,\CC))/e(\CC^*)=q(q^2-1).$$

(iii) As in (i), we have a fibration
 \begin{eqnarray*}
 \CC\longrightarrow \SL(2,\CC) &\longrightarrow& \CC^{2}\setminus\{0\}\\
 \left(
     \begin{array}{cc}
       a & b \\
       c & d \\
     \end{array}
   \right)&\mapsto&(a,b)
 \end{eqnarray*}
which is locally trivial in the Zariski topology.
Hence
 $
  e(\SL(2,\CC))= q(q^2-1).
 $
\end{proof}

\subsection{E-polynomial of coset spaces of $\GL(2,\CC)$} Let
 \begin{eqnarray*}
   D&:=&\{\mbox{diagonal matrices in }\GL(2,\CC)\}\cong\CC^*\times\CC^*, \\
   U &:=& \left\{\left(\begin{array}{cc}\mu&\nu\\0&\mu\\
   \end{array}\right)\in\GL(2,\CC)\right\}\cong\CC^*\times\CC.
 \end{eqnarray*}
Note that $\ZZ_2$ acts on $\GL(2,\CC)$ by switching the rows, i.e., multiplication on the left by
$\left(\begin{array}{cc}0&1\\1&0\\\end{array}\right)$, and that this action descends to the left coset space $\GL(2,\CC)/D$.

\begin{prop}\label{propGL}
\begin{itemize}
\item[(i)] $e(\GL(2,\CC)/D)=q(q+1)=q^2+q$,
\item[(ii)] $e(\GL(2,\CC)/D)^+=q^2$ and $e(\GL(2,\CC)/D)^- = q$,
\item[(iii)] $e(\GL(2,\CC)/U)=q^2-1$.
\end{itemize}
\end{prop}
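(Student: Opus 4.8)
The plan is to obtain (i) and (iii) directly from Lemma \ref{lem:HPgroups} by viewing $\GL(2,\CC)$ as the total space of a principal bundle, and to treat the more delicate part (ii) by identifying $\GL(2,\CC)/D$ with an explicit surface and analysing the induced involution.

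For (i) and (iii): the subgroups $D\cong\CC^*\times\CC^*$ and $U\cong\CC^*\times\CC$ are both connected, so multiplication makes $\GL(2,\CC)$ a principal $D$-bundle (resp.\ $U$-bundle) over the smooth, connected homogeneous space $\GL(2,\CC)/D$ (resp.\ $\GL(2,\CC)/U$). By the third bullet of Remark \ref{rem:new} the monodromy on the cohomology of the fibre is trivial, so Proposition \ref{basics2} applies and gives $e(\GL(2,\CC))=e(D)\,e(\GL(2,\CC)/D)$ and $e(\GL(2,\CC))=e(U)\,e(\GL(2,\CC)/U)$. Since $e(D)=(q-1)^2$ and $e(U)=q(q-1)$, dividing the value $e(\GL(2,\CC))=q(q+1)(q-1)^2$ of Lemma \ref{lem:HPgroups} yields $e(\GL(2,\CC)/D)=q(q+1)$ and $e(\GL(2,\CC)/U)=(q+1)(q-1)=q^2-1$, proving (i) and (iii).

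For (ii): I would first identify $\GL(2,\CC)/D$ geometrically. Sending a coset to the ordered pair of lines in $\CC^2$ determined by the two rows of $g$ gives an isomorphism $\GL(2,\CC)/D\cong(\PP^1\times\PP^1)\setminus\Delta$, where $\Delta$ is the diagonal: the two rows of an invertible matrix are independent, and scaling them by $D$ does not change the lines. Under this identification the row-switching involution of the statement becomes the swap of the two $\PP^1$-factors. This is an algebraic involution, so it acts on $H^*_c$ compatibly with the mixed Hodge structure and the decompositions $e(Z)^{\pm}$ are well defined, with $e(Z)^+=e(Z/\ZZ_2)$. It then remains to compute $e(Z)^+$ for $Z=(\PP^1\times\PP^1)\setminus\Delta$. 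The quotient of $\PP^1\times\PP^1$ by the factor swap is $\Sym^2\PP^1\cong\PP^2$ (binary quadratic forms up to scale), and under this map $\Delta$ goes isomorphically to the discriminant conic $C$ (the Veronese image), a smooth rational curve with $e(C)=q+1$. Hence $Z/\ZZ_2\cong\PP^2\setminus C$, and additivity (Proposition \ref{thm:basics}(i)) gives $e(Z)^+=e(\PP^2)-e(C)=(q^2+q+1)-(q+1)=q^2$. Finally $e(Z)^-=e(Z)-e(Z)^+=(q^2+q)-q^2=q$, completing (ii). As a cross-check, one may instead compute directly from the Gysin sequence of $\Delta\subset\PP^1\times\PP^1$ that $H^2_c(Z)\cong\CC$ is spanned by the class of $a-b$ (the difference of the two hyperplane classes) while $H^4_c(Z)\cong\CC$ is the top class; the factor swap acts by $-1$ and $+1$ respectively, again giving $e(Z)^-=q$ and $e(Z)^+=q^2$.

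The main obstacle is part (ii): pinning down the induced $\ZZ_2$-action precisely, namely verifying that switching rows descends to the factor swap on $(\PP^1\times\PP^1)\setminus\Delta$, that $\Delta$ maps to a smooth conic in $\Sym^2\PP^1\cong\PP^2$, and that the involution is compatible with the mixed Hodge structure so that the splitting $e(Z)=e(Z)^++e(Z)^-$ and the identity $e(Z)^+=e(Z/\ZZ_2)$ are legitimate. Parts (i) and (iii) are then essentially immediate arithmetic with Lemma \ref{lem:HPgroups} and Proposition \ref{basics2}.
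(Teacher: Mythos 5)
Your proposal is correct. Part (ii) is essentially identical to the paper's argument: the paper also identifies $\GL(2,\CC)/D$ with $(\PP^1\times\PP^1)\setminus\Delta$ (via the row map, noting that the diagonal subgroup scales the rows, so the map is constant on cosets; since $P_0=\left(\begin{smallmatrix}0&1\\1&0\end{smallmatrix}\right)$ normalises $D$, the involution descends and is precisely the factor swap), and then computes $e(Z)^+=e(\PP^2\setminus C)=q^2$ exactly as you do; your Gysin-sequence cross-check identifying $H^2_c(Z)$ with the span of $a-b$ (swap-antiinvariant) and $H^4_c(Z)$ with the top class (swap-invariant) is a useful extra verification not in the paper. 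Where you genuinely diverge is in (i) and (iii): you obtain both by dividing $e(\GL(2,\CC))=q(q+1)(q-1)^2$ by $e(D)=(q-1)^2$ and $e(U)=q(q-1)$, using the principal-bundle case of Remark \ref{rem:new} together with Proposition \ref{basics2} — the same trick the paper itself uses to get $e(\PGL(2,\CC))$ in Lemma \ref{lem:HPgroups}(ii), so this is fully sanctioned by the paper's toolkit. The paper instead proves (i) directly from the $(\PP^1\times\PP^1)\setminus\Delta$ model by additivity, and (iii) via the explicit Zariski-locally-trivial fibration $\CC^*\to\GL(2,\CC)/U\to\PP^1$, $\left(\begin{smallmatrix}a&b\\c&d\end{smallmatrix}\right)\mapsto[c:d]$. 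Your division argument is quicker and uniform, at the cost of invoking the fibration machinery where the paper's computations are elementary; note also that since your proof of (ii) already requires the explicit $(\PP^1\times\PP^1)\setminus\Delta$ model, the paper's proof of (i) comes for free from it, making your separate bundle argument for (i) slightly redundant, though perfectly sound.
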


\begin{proof}
(i)
The map $\left(\begin{array}{cc}a&b\\c&d\\\end{array}\right)\mapsto ([a:b],[c:d])$ defines an isomorphism $\GL(2,\CC)/D\cong(\PP^1\times\PP^1)\setminus\Delta$,
where $\Delta$ denotes the diagonal.
The E-polynomial is therefore $e(\PP^1)^2-e(\PP^1)=q(q+1)$.

(ii) By the above,
$$(\GL(2, \CC)/D)/\ZZ_{2}\cong ((\PP^1\times\PP^1)\setminus\Delta)/\ZZ_2=
\PP^{2}\setminus C,$$ where $C$ is a conic. So
$$e(\GL(2,\CC)/D)^+=e((\GL(2,\CC)/D)/\ZZ_2)=e(\PP^2)-e(C)=e(\PP^2)-e(\PP^1)=q^2.$$

(iii) We have a fibration
 $$
 \CC^*\longrightarrow\GL(2,\CC)/U\longrightarrow\PP^1,
 $$
given by $\left(\begin{array}{cc} a&b \\c&d \end{array}\right)\mapsto[c:d]$.
This fibration is locally trivial in the Zariski topology, so
 $$e(\GL(2,\CC)/U)=e(\CC^*)e(\PP^1)=(q-1)(q+1).$$
\end{proof}

The action of $\ZZ_2$ descends also to $\PGL(2,\CC)=\GL(2,\CC)/\CC^*$ and we have a diagram of fibrations

\begin{equation}\label{eqn:fibrationPGL}
\xymatrix{
 \CC^\ast \ar[d]\ar[r]^{=} & \CC^\ast \ar[d]\\
 \PGL(2,\CC) \ar[d]^{\widetilde{\pi}}\ar[r]& \PGL(2,\CC)/\ZZ_2\ar[d]^{\pi}\\
 \GL(2,\CC)/D \ar[r]^{\hspace{-20pt}2:1} &(\GL(2,\CC)/D)/\ZZ_2.}
 \end{equation}

\begin{prop} \label{prop:rem:extra}
For the action of $\ZZ_2$ on $\PGL(2,\CC)$,
$$
 e(\PGL(2,\CC))^+ = q^3-q, \qquad e(\PGL(2,\CC))^-=0 .
 $$
 Moreover, the monodromy of \eqref{eqn:fibrationPGL} is given by the morphism $\CC^*\to\CC^*$, $\mu\mapsto\mu^{-1}$.
\end{prop}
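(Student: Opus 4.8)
The plan is to apply Proposition~\ref{prop:fibration} and Remark~\ref{rmk:e(Z/Z_2)-situations} to the diagram \eqref{eqn:fibrationPGL}, in which $Z=\PGL(2,\CC)$, the base is $B=\GL(2,\CC)/D$ and the fibre of $\widetilde\pi$ is $F=D/\CC^\ast\cong\CC^\ast$ (here $\CC^\ast$ is the centre of $\GL(2,\CC)$). First I would check that the hypotheses hold. By Proposition~\ref{propGL}(i) we have $B\cong(\PP^1\times\PP^1)\setminus\Delta$; projecting to the first factor exhibits $B$ as a fibre bundle over $\PP^1$ with fibre $\PP^1\setminus\{\mathrm{pt}\}\cong\CC$, so $B$ is smooth, irreducible and simply connected. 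In particular the monodromy of $\pi_1(B)$ on $H^*_c(F)$ is trivial, and $\widetilde\pi$ is a locally trivial $\CC^\ast$-bundle, so the remaining hypotheses (smoothness of $B$, $\widetilde\pi$ and $\pi$) are clear and Proposition~\ref{prop:fibration} applies.

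The key step is to identify the action of $\ZZ_2$ on the fibre $F\cong\CC^\ast$, equivalently the monodromy of \eqref{eqn:fibrationPGL}. I would parametrise $F$ over a pair of distinct row-directions $(P_1,P_2)$ by the ratio $\mu\in\CC^\ast$ of the scalars multiplying the two rows of a representative matrix; concretely, over the base point $b_0$ with representative $\Id$, the fibre is $\{[\,\mathrm{diag}(\mu,1)\,]:\mu\in\CC^\ast\}$. The generator of $\ZZ_2$ acts by left multiplication by $w=\left(\begin{smallmatrix}0&1\\1&0\end{smallmatrix}\right)$, which interchanges the two rows, hence $P_1\leftrightarrow P_2$, sends $\mathrm{diag}(s,t)\mapsto\mathrm{diag}(t,s)$, and so inverts the ratio $\mu\mapsto\mu^{-1}$. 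Since $w$ does not preserve a single fibre but maps $F_{b_0}$ to $F_{w\cdot b_0}$, to obtain the induced self-map of $H^*_c(F)$ I would choose a loop in $B/\ZZ_2$ lifting to a path in $B$ from $b_0$ to $w\cdot b_0$ and compose parallel transport along it with the deck identification; this returns $\mu\mapsto\mu^{-1}$ (up to the map $\mu\mapsto-\mu$, which is homotopic to the identity on $\CC^\ast$ and hence acts trivially on $H^*_c(\CC^\ast)$). This establishes the ``moreover'' statement.

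Finally I would assemble the E-polynomials. From the inversion action and \eqref{eqcc} we get $e(F)^+=q$ and $e(F)^-=-1$, while Proposition~\ref{propGL}(ii) gives $e(B)^+=q^2$ and $e(B)^-=q$. Plugging these into Remark~\ref{rmk:e(Z/Z_2)-situations} yields
\[
 e(\PGL(2,\CC))^+ = e(F)^+e(B)^+ + e(F)^-e(B)^- = q\cdot q^2 - q = q^3-q,
\]
\[
 e(\PGL(2,\CC))^- = e(F)^+e(B)^- + e(F)^-e(B)^+ = q^2 - q^2 = 0,
\]
as claimed. As a consistency check, their sum is $q^3-q=e(\PGL(2,\CC))$, agreeing with Lemma~\ref{lem:HPgroups}(ii).

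The main obstacle is the middle step: correctly pinning down the $\ZZ_2$-monodromy on the fibre as the inversion $\mu\mapsto\mu^{-1}$. The conceptual reason is that $w$ realises the nontrivial Weyl element acting on the torus $D/\CC^\ast\cong\CC^\ast$, but care is needed because this element permutes fibres rather than fixing one, so the honest computation of the induced map on $H^*_c(F)$ requires the parallel-transport and deck-transformation argument above.
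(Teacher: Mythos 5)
Your proposal is correct, but it takes a genuinely different route from the paper for the first half of the statement. The paper does not use the fibration \eqref{eqn:fibrationPGL} to compute $e(\PGL(2,\CC))^\pm$ at all: it observes that the $\ZZ_2$-action, being left multiplication by $P_0=\left(\begin{smallmatrix}0&1\\1&0\end{smallmatrix}\right)$, extends to an action of the \emph{connected} group $\GL(2,\CC)$, so the involution is homotopic to the identity and acts trivially on cohomology; hence $e(\PGL(2,\CC))^-=0$ and $e(\PGL(2,\CC))^+=e(\PGL(2,\CC))=q^3-q$ by Lemma \ref{lem:HPgroups}(ii), with no monodromy input needed. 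You instead deduce $e^\pm$ from Proposition \ref{prop:fibration} and Remark \ref{rmk:e(Z/Z_2)-situations}, which forces you to establish the monodromy \emph{first} and makes the numerical statement depend on the geometric one — the reverse of the paper's logical order — but this is perfectly valid: you correctly verify the hypotheses (in particular the simple connectivity of $B\cong(\PP^1\times\PP^1)\setminus\Delta$, which guarantees trivial $\pi_1(B)$-monodromy), you use the right inputs $e(F)^+=q$, $e(F)^-=-1$ from \eqref{eqcc} and $e(B)^+=q^2$, $e(B)^-=q$ from Proposition \ref{propGL}(ii), and the arithmetic checks out. The paper's connectedness trick is shorter, yields the stronger fact that $\ZZ_2$ acts trivially on all of $H^*_c(\PGL(2,\CC))$, and generalises to any finite-group action extending to a connected group (it is reused in this form in subsection \ref{subsec:recovering}); your route buys an internal consistency check against Lemma \ref{lem:HPgroups}(ii), which you note. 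For the monodromy itself your argument is essentially the paper's: the paper realises your ``parallel transport plus deck identification'' concretely, using that $\GL(2,\CC)$ acts on the whole left-hand fibration by right multiplication, so that right-multiplying along a path from $\Id$ to $P_0$ trivialises the transport and gives $\mu\mapsto\left[\left(\begin{smallmatrix}0&1\\ \mu&0\end{smallmatrix}\right)P_0\right]=\left[\left(\begin{smallmatrix}1&0\\0&\mu\end{smallmatrix}\right)\right]\sim\left[\left(\begin{smallmatrix}\mu^{-1}&0\\0&1\end{smallmatrix}\right)\right]$. One small imprecision in your write-up: left multiplication by $P_0$ sends $\mathrm{diag}(s,t)$ to the antidiagonal matrix $\left(\begin{smallmatrix}0&t\\ s&0\end{smallmatrix}\right)$, not to $\mathrm{diag}(t,s)$; the diagonal form only reappears after composing with the transport back (the right multiplication by $P_0$ above), which is exactly the step you defer to parallel transport — so your conclusion stands, but the paper's explicit trivialisation is the cleaner way to carry it out.
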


\begin{proof}
The action of $\ZZ_2$ on $\PGL(2,\CC)$ extends to an action of $\GL(2,\CC)$ (by left multiplication). Since $\GL(2,\CC)$ is connected, it follows that $\ZZ_2$ acts trivially on cohomology, giving the first statement. For the monodromy, note that $\GL(2,\CC)$ acts on the left hand fibration of \eqref{eqn:fibrationPGL} by right multiplication; a path in $\GL(2,\CC)$ connecting the identity to $\left(\begin{array}{cc} 0  & 1 \\ 1 & 0 \end{array}\right)$ descends to a loop representing the generator of $\ZZ_2$. Since the fibre of \eqref{eqn:fibrationPGL} over the identity is embedded by $\mu \mapsto \left(\begin{array}{cc} \mu  & 0 \\ 0 & 1 \end{array}\right)$, the embedding after the action of $\ZZ_2$ is given (up to homotopy) by
 $$
\mu \mapsto \left(\begin{array}{cc} 0  & 1 \\ \mu & 0 \end{array}\right)\left(\begin{array}{cc} 0  & 1 \\ 1 & 0 \end{array}\right)=\left(\begin{array}{cc} 1 & 0 \\ 0 & \mu \end{array}\right) \sim
\left(\begin{array}{cc} \mu^{-1}  & 0 \\ 0 & 1 \end{array}\right).
 $$
This gives the second assertion.
\end{proof}

\subsection{E-polynomials for a stratification of $\SL(2,\CC)$} We use the $\GL(2,\CC)$-action on $\SL(2,\CC)$
by conjugation to decompose the space $W=\SL(2,\CC)$ into strata whose E-polynomials we compute. The strata we use
are the different conjugacy classes: 
\begin{itemize}
\item $W_{0}:=$ conjugacy class of $\left(
             \begin{array}{cc}
               1 & 0 \\
               0 & 1
             \end{array}
           \right)$.
\item $W_{1}:=$ conjugacy class of  $\left(
             \begin{array}{cc}
               -1 & 0 \\
               0 & -1
             \end{array}
           \right)$.
\item $W_{2}:=$ conjugacy class of $\left(
              \begin{array}{cc}
                1 & 1 \\
                0 & 1
              \end{array}
            \right)$.
\item $W_{3}:=$ conjugacy class of $\left(
              \begin{array}{cc}
                -1 & 1 \\
                0 & -1
              \end{array}
            \right)$.
\item $W_{4}:=\{A\in\SL(2,\CC)\, | \, \Tr(A)\ne\pm2\}$. \end{itemize}
The stratum $W_4$ can also be described as the union of  the conjugacy classes of $\left(
              \begin{array}{cc}
                \lambda & 0 \\
                0 & \lambda^{-1}
              \end{array}
            \right)$
for $\lambda \in \CC\backslash\{0,\pm 1\}$.

\subsubsection{$W_0$ and $W_1$.}  These strata are single points, so
$$e(W_0)=e(W_1)=1.$$

\subsubsection{$W_2$ and $W_3$.} The stabiliser of $\left(
    \begin{array}{cc}
      \lambda & 1 \\
      0 & \lambda
    \end{array}
  \right)$, where $\lambda=\pm 1$, is the subgroup $U$ of Proposition \ref{propGL}. So
 $$
 e(W_{2})=e(W_{3})=q^{2}-1
 $$
by Proposition \ref{propGL}(iii).

\subsubsection{$W_4$.} \label{3.2.3}
The stabiliser of
$\left(
    \begin{array}{cc}
      \lambda & 0 \\
      0 & \lambda^{-1}
    \end{array}
  \right)$, where $\lambda\ne\pm 1$, is the diagonal group $D$. We therefore have a diagram
 $$
 \xymatrix{
 \GL(2,\CC)/D\ar[d]\ar[r]^= & \GL(2,\CC)/D\ar[d]\\
 \widetilde{W}_4\ar[d]\ar[r]&W_{4}\ar[d]\\
 \CC\setminus\{0,\pm 1\}\ar[r]^{2:1}& \CC\setminus\{\pm 2\}.
 }
 $$
where $\widetilde{W}_4:=\{(\lambda,P)|\lambda\in\CC\setminus\{0,\pm1\},P\in\GL(2,\CC)/D\}$.
The middle map is given by $(\lambda,P)\mapsto P\left(\begin{array}{cc}\lambda&0\\0&\lambda^{-1}\\ \end{array}\right)P^{-1}$,
the bottom map is given by $\lambda\mapsto\lambda+\lambda^{-1}$ and the map $W_4\to\CC\setminus\{\pm2\}$ is the trace map, $\Tr$.
Writing $F=\GL(2,\CC)/D$ and $B=\CC\setminus\{0,\pm1\}$, we have $e(B)^+=
q-2$, $e(B)^-=-1$ and, from Proposition \ref{propGL}, $e(F)^+=q^2$, $e(F)^-=q$. So, by Proposition \ref{prop:fibration},
 $$
 e(W_{4})=e(F)^{+} e(B)^{+} +e(F)^{-} e(B)^{-} = q^{2}(q-2)-q=q^{3}-2q^{2}-q.
 $$
One may note also that $e(\widetilde{W}_4)=q(q+1)(q-3)=q^3 - 2 q^2-3 q$.

\begin{rmk}
Note that
 $$
 e(W_{0})+e(W_{1})+e(W_{2})+e(W_{3})+e(W_{4})=q^3-q=e(\SL(2,\CC))\, ,
 $$
as expected.
\end{rmk}

\section{E-polynomials of  $\SL(2,\CC)^2$ using orbit spaces}\label{sl2}

Let us consider the map
\begin{eqnarray*}
f:\SL(2,\CC)^{2}&\longrightarrow & \SL(2,\CC)\\
(A,B)&\mapsto& [A,B]=ABA^{-1}B^{-1}
\end{eqnarray*}

Note that $\PGL(2,\CC)$ acts on both spaces and $f$ is equivariant. We stratify $X=\SL(2,\CC)^{2}$ as follows
\begin{itemize}
\item $X_{0}:=f^{-1}(W_{0})$
\item $X_{1}:=f^{-1}(W_{1})$
\item $X_{2}:=f^{-1}(W_{2})$
\item $X_{3}:=f^{-1}(W_{3})$
\item $X_{4}:=f^{-1}(W_{4})$.
\end{itemize}

Hence $X=\bigsqcup_{i=0}^{4} X_{i}$, so again we will test that
 $$
e(X)=e(\SL(2,\CC))^2= \sum_{i}e(X_{i})=q^{2}(q+1)^{2}(q-1)^{2}=q^{6}-2q^{4}+q^{2}.
 $$
 It will be convenient to study the varieties $f^{-1}(\xi)$ for fixed $\xi$ and we define accordingly
 \begin{itemize}
\item $\overline{X}_{0}:=f^{-1}(\Id)=X_0$,
\item $\overline{X}_{1}:=f^{-1}(-\Id)=X_1$,
\item $\overline{X}_{2}:=f^{-1}\left(\left(\begin{array}{cc}1&1\\0&1\end{array}\right)\right)$,
\item $\overline{X}_{3}:=f^{-1}\left(\left(\begin{array}{cc}-1&1\\0&-1\end{array}\right)\right)$,
\item $\overline{X}_{4,\lambda}:=f^{-1}\left(\left(\begin{array}{cc}\lambda&0\\0&\lambda^{-1}
  \end{array}\right)\right)$, where $\lambda\ne0,\pm1$.
\end{itemize}
It will also be convenient to define
\begin{itemize}
\item $\overline{X}_{4}:=\left\{(A,B,\lambda)\, | \, AB=\left(\begin{array}{cc}
      \lambda & 0 \\
      0 & \lambda^{-1}\\
    \end{array}
  \right) BA, \text{ for some } \lambda\ne0, \pm1,\, A,\,B \in \SL(2,\CC)\right\}$.
\end{itemize}

\subsection{E-polynomial for $X_{0}$}
We decompose
 $$
 X_{0}=(\SL(2,\CC)\times\{\pm \Id\})\cup (\{\pm\Id\}\times \SL(2,\CC))\cup X_0'\, ,
 $$
where
 $$
 X'_{0}=\{(A,B)\in X \ | \ AB=BA;\; A,B\neq \pm\Id\}.
 $$
Hence
 $$
 e(X_{0})=4\, e(\SL(2,\CC))-4 +e(X'_{0}).
 $$
We subtract $4$ because $(\SL(2,\CC)\times\{\pm \Id\})\cap (\{\pm\Id\}\times \SL(2,\CC))$
consists of $4$ points, $\{\pm \Id\}\times \{\pm\Id\}$, which we have counted twice in
$(\SL(2,\CC)\times\{\pm \Id\})\cup (\{\pm\Id\}\times \SL(2,\CC))$.

Furthermore we split $X'_{0}$ into the following subsets $X'_{0}=X^{'a}_{0}\sqcup X^{'b}_{0}$
\begin{itemize}
\item  $X^{'a}_{0}:=\{(A,B)\in X_0' | AB=BA;\; A,B\neq \pm\Id, \tr A\neq \pm2\}$. This
set consists of pairs of matrices $(A,B)\sim\left(\left(
    \begin{array}{cc}
      \lambda & 0 \\
      0 & \lambda^{-1} \\
    \end{array}
     \right),\left(
    \begin{array}{cc}
      \mu & 0 \\
      0 & \mu^{-1}\\
    \end{array}
  \right)\right)$,
  where $\lambda, \mu\in \CC\backslash\{0,\pm 1\} $.
\item $
X^{'b}_{0}:=\{(A,B)\in X_0' | AB=BA;\; A,B\neq \pm\Id, \tr A=\pm2\}$. This set
consists of pairs of matrices $(A,B)\sim\left(\left(
    \begin{array}{cc}
      \lambda & b \\
      0 & \lambda^{-1} \\
    \end{array}
  \right),\left(
    \begin{array}{cc}
      \mu & y \\
      0 & \mu^{-1} \\
    \end{array}
  \right)\right)$,
  where $\lambda=\pm 1, \mu= \pm 1$ and $y, b\in \CC^{\ast}$.
\end{itemize}

We use the following diagram to compute $e(X^{'a}_{0})$:
 $$
\xymatrix{
F=\CC\backslash\{0,\pm 1\}\ar[r]^{=}\ar[d] &\CC\backslash\{0,\pm1\}\ar[d]\\
\widetilde{X}^{'a}_{0}\ar[r]\ar[d]^{\widetilde{\pi}} & X^{'a}_{0}\ar[d]^{\pi}\\
B=\widetilde{W}_4\ar[r]^{2:1} & W_4
}
 $$
where $W_4$, $\widetilde{W}_4$ are as in subsection \ref{3.2.3}, the map $\pi$ is given by
$(A,B)\mapsto A$ and $\widetilde{X}^{'a}_0 = (\CC \setminus \{0,\pm 1\}) \times\dfrac{}{} \widetilde{W}_4$.
The bottom map is $(\lambda, P) \mapsto P\left(\begin{array}{cc} \lambda & 0\\ 0& \lambda^{-1}
 \end{array} \right) P^{-1}$ and the middle map is
 $$(\mu,\lambda, P) \mapsto
 \left(P\left(\begin{array}{cc} \lambda & 0\\ 0& \lambda^{-1}
 \end{array} \right) P^{-1}, P\left(\begin{array}{cc} \mu & 0\\ 0& \mu^{-1}
 \end{array} \right) P^{-1}\right).$$
 The maps are $2:1$ and reflect the action of $\ZZ_{2}$ consisting of
 interchanging the eigenvalues and eigenvectors.
 The E-polynomials of $W_4$ and $\widetilde{W}_4$ have already been computed in subsection \ref{3.2.3},
so we have $e(B)^+=q^3-2q^2-q$, $e(B)^-=-2q$, while $e(F)^+=q-2$, $e(F)^-=-1$ since the action on $F$ is given by $\lambda\mapsto\lambda^{-1}$. So
 $$
e(X^{'a}_{0})=(q^{3}-2q^{2}-q)(q-2)+(-2q)(-1)=q^{4}-4q^{3}+3q^{2}+4q.
$$

Now we compute $e(X^{'b}_{0})$. Note that $X^{'b}_0$ has $4$ components,
according to the signs of $\tr A$ and $\tr B$. We analyse one of these components, say the one given by $\lambda=\mu=1$,
the others being analogous. Note that the stabiliser of $A$ (which is the same as
the stabiliser of $B$) is the subgroup $U$ of Proposition \ref{propGL}. It is easy to check that each orbit contains exactly one element of the form
 $$
 (A,B)=\left(\left(
    \begin{array}{cc}
      1 & 1 \\
      0 & 1
    \end{array}
  \right), \left(
    \begin{array}{cc}
       1 & y \\
      0 &  1
    \end{array}
  \right)\right),
 $$
 so
 $$
 e(X^{'b}_{0})=4\, e( \GL(2,\CC)/U)e(\CC^\ast)=4(q^{2}-1)(q-1)= 4q^{3}-4q^{2}-4q+4 \, .
 $$
Hence
$$
e(X'_{0})=e(X^{'a}_{0})+e(X^{'b}_{0})= q^{4}-q^2+4
$$
and
 \begin{eqnarray*}
 e(X_{0})&=& 4 \, e(\SL(2,\CC))-4 +e(X'_{0}) \\
  &=& 4q^{3}-4q-4+q^{4}-q^2+4=q^{4}+4q^{3}-q^2-4q\, .
 \end{eqnarray*}

\subsection{E-polynomial for $X_{1}$}

$$
X_{1}=\{(A,B) \in X \ | \ AB=-BA \}.
$$

For $(A,B)\in X_{1}$,
$$\tr B=\tr (ABA^{-1}) =\tr(-B)=-\tr B$$ by the properties of the trace.
Hence $\tr B=0$ and analogously $\tr A=0$. The characteristic equations of these matrices are then $x^{2}+1=0$, therefore $A=\left(
    \begin{array}{cc}
      i & 0 \\
      0 & -i
    \end{array}
  \right)$ in some basis. The equation $AB=-BA$ implies that, in the same basis, $B=\left(
    \begin{array}{cc}
      0 & y \\
      -y^{-1} & 0
    \end{array}
  \right)$ where $y\neq 0$. Conjugating by a diagonal matrix, we can arrange that $y=1$.
Finally note that $\stab (A,B)=\CC^\ast$. So
  $$
 e(X_1)=e( \PGL(2,\CC))=q(q+1)(q-1)=q^{3}-q.
 $$

\subsection{E-polynomials for $X_{2}$, $\overline{X}_2$} \label{4.3}

\begin{eqnarray*}
 X_{2} &=& \{(A,B)\in X\ | \ AB\sim\xi BA\}, \\
 \overline{X}_{2} &=& \{(A,B)\in X \ | \ AB=\xi BA\},
\end{eqnarray*}
 where $\xi=\left(
    \begin{array}{cc}
      1 & 1 \\
      0 & 1
    \end{array}
  \right)$.

Firstly, $AB=\xi BA$ implies that $\tr B=\tr (ABA^{-1})=\tr \xi B$. Together with $\det B=1$, this gives us
 $$
 B=\left(
    \begin{array}{cc}
      x & y \\
      0 & x^{-1}
    \end{array}
  \right).
  $$
 Similarly
 $$
 A=\left(
    \begin{array}{cc}
      a & b \\
      0 & a^{-1}
    \end{array}
  \right).
$$

Now the condition $AB=\xi BA$ may be rewritten as
 $$
\left(
    \begin{array}{cc}
      ax & ay+bx^{-1} \\
      0 & a^{-1}x^{-1}
    \end{array}
  \right)=
  \left(
    \begin{array}{cc}
      ax & bx+a^{-1}y+a^{-1}x^{-1} \\
      0 & a^{-1}x^{-1}
    \end{array}
  \right)
$$
or
\begin{equation}\label{eqn:abxiba}
y(a-a^{-1})=b(x-x^{-1})+a^{-1}x^{-1},
\end{equation}
which can be written as
 $$
 y(x(a^2-1)) - b (a (x^2-1)) =1 \, .
 $$
So we have a family of lines over the open set $\CC^* \x \CC^* \setminus \{(\pm 1,\pm 1)\}$ in the plane parameterised
by $(a,x)$.
The stabiliser of $\xi$ in $\PGL(2,\CC)$ is $\CC$, and it acts on each line by translations:
 $$
 (y,b)\mapsto (y + \lambda x^{-1}(1-x^2), b + \lambda a^{-1}(1-a^2)) .
 $$
So
 $$
 e(\overline{X}_2)= q\big( (q-1)^2-4 \big) = q^3-2q^2-3 q
 $$
 and
  \begin{eqnarray*}
  e(X_{2}) &=& e(\GL(2,\CC)/U) e(\overline{X}_2) \\
  &=& (q^2-1) (q^3-2q^2-3q) \\
  &=& q^{5}-2q^{4}-4q^{3}+2q^{2}+3q.
 \end{eqnarray*}

\subsection{E-polynomials for $X_{3}$, $\overline{X}_3$}  \label{4.4}

\begin{eqnarray*}
 X_{3} &=& \{(A,B)\in X \ | \ AB\sim\xi BA\}, \\
  \overline{X}_{3} &=& \{(A,B)\in X \ | \ AB=\xi BA\},
\end{eqnarray*}
where $\xi = \left(
    \begin{array}{cc}
      -1 & 1\\
      0 & -1
    \end{array}
  \right)$.

For $(A,B)\in \overline{X}_3$, write
 $$
 A=\left(
    \begin{array}{cc}
      a & b \\
      c & d\\
    \end{array}
  \right), \quad
 B=\left(
    \begin{array}{cc}
      x & y \\
      z & w\\
    \end{array}
  \right).
  $$
Then $\tr B=\tr \xi B$ implies
\begin{equation}
z=2(x+w),\label{eq:x3-tr-1}
\end{equation}
while $\tr A =\tr \xi^{-1}A$ implies
 \begin{equation}
 c=-2(a+d).\label{eq:x3-tr-2}
 \end{equation}
and the relation $AB=\xi BA$ is translated into
\begin{eqnarray}
  && cy+2dw+bz=0 \label{eq:x3}
\end{eqnarray}

 We consider the following cases
 \begin{itemize}
 \item $\overline{X}{}'_{3}, X_3'$, when $x+w=0$,
 \item $\overline{X}{}''_{3}, X_3''$, when $x+w\neq 0$.
 \end{itemize}

\subsubsection{}
$\overline{X}{}'_{3}:=\{(A,B)\in \overline{X}_{3}|\, x+w=0 \}$
and $X'_{3}:=\{(A,B)\in {X}_{3}|\, x+w=0 \}$.
In this case \eqref{eq:x3-tr-1} implies $xw=1$; together with the equation $w=-x$, this gives $x=\pm i$, $w=\mp i$. The first vector of
our basis is an eigenvector for $B$. We choose the second vector to be an eigenvector of $B$ for the other eigenvalue,
so that
$$
B=\left(
    \begin{array}{cc}
      \pm i & 0 \\
      0 & \mp i
    \end{array}
  \right) .
 $$
Note that $\stab(\xi,B)=\CC^\ast$. {}From equation (\ref{eq:x3}), we get that $d=0$. So $A$ is of the form
$$
A=\left(
    \begin{array}{cc}
      a & \frac{1}{2a} \\
      - 2a & 0
    \end{array}
  \right)
$$
with $a\in \CC^{\ast}$. These $(A,B)$ form a slice for the action of the stabiliser of $\xi$
in $\PGL(2,\CC)$ on $\overline{X}{}_3'$, and this stabiliser is isomorphic to $\CC$. So
 $$
 \overline{X}{}_3'\cong\CC\times\CC^*\times\{\pm i\},\ \ X_3'\cong\PGL(2,\CC)\times\CC^*\times\{\pm i\}
 $$
 and
 $$
 e(\overline{X}{}_3')=2q(q-1)=2 q^2 -2 q,\ \
 e(X'_{3})=2q(q^2-1)(q-1)=2 q^4 - 2 q^3- 2 q^2+ 2 q.
 $$

\subsubsection{}
$\overline{X}{}''_{3}:=\{(A,B)\in \overline{X}_{3}|\, x+w\neq 0 \}$
and $X''_{3}:=\{(A,B)\in {X}_{3}|\, x+w\neq 0 \}$.
Changing the second vector in the basis corresponds to conjugating by the matrix
$\left( \begin{array}{cc}
1 & \alpha\\
0& 1
\end{array}\right)$. But
$$\left( \begin{array}{cc}
1 & \alpha\\
0& 1
\end{array}\right) \left( \begin{array}{cc}
x &y\\
z& w
\end{array}\right)
\left( \begin{array}{cc}
1 & -\alpha\\
0& 1
\end{array}\right)=\left( \begin{array}{cc}
x+\alpha z &*\\
z& w -\alpha z
\end{array}\right).$$ So we can arrange to have $w=0$, and hence by \eqref{eq:x3-tr-1},
 $$
 B=\left( \begin{array}{cc}
 x &-\frac{1}{2x} \\
 2x& 0
 \end{array}\right),
 $$
where $x\neq 0$. Again these $(A,B)$ form a slice, which we shall denote by $S$.
The equation \eqref{eq:x3} in this situation is $cy+bz=0$, i.e., $b=\frac{1}{4x^2} c$.
 The determinant is $ad-bc=1$; using \eqref{eq:x3-tr-2}, we get
  \begin{equation} \label{eqn:adx}
  ad - \frac{(a+d)^2}{x^2}=1\, .
  \end{equation}

 Let us compute the E-polynomial of this variety $S$. We have the contributions:
 \begin{itemize}
 \item $S^a:=\{(A,B) \in S \ | \  a+d=0\}$. Then $a=-d=\pm i$, $x\neq 0$. So $e(S^{a})=2(q-1)$.
 \item The case $a+d\neq 0$ has several contributions. If $a+d\neq 0$, take $t=(a+d)/x$. Then $t\in \CC^\ast$,
  and the equation (\ref{eqn:adx}) is $ad=1+t^2$. This is a family of conics over $\CC^\ast$.
  The first contribution is
   $$
  S^b:= \{(A,B) \in S \ | \  a+d\neq 0, t=\pm i \}\, .
  $$
  This corresponds to degenerate conics. For $t=\pm i$, the conic is $ad=0$. As $a+d\neq 0$, we have that
  $(a,d)\neq (0,0)$. So we get $e(S^b)=4(q-1)$.
  \item $S^c:=\{(A,B) \in S\ | \  a+d\neq 0, t\neq 0,\pm i \}$.
  For $t\in \CC\setminus\{0,\pm i\}$, we get a smooth conic $ad=1+t^2$. Consider the family
   of projective conics obtained by completing these affine conics. This gives a conic bundle, i.e., a fibration
  $$
   \PP^1 \to S^{c,1} \to \CC\setminus\{0,\pm i\}.
  $$
  Now:
  \begin{enumerate}
  \item By Remark \ref{rem:new},  we get $e(S^{c,1})=(q+1)(q-3)$.
  \item $S^{c,2}$ corresponds to the points at infinity of the conics in the conic bundle.
  These are given by $ad=0$, where $t\in \CC\setminus\{0,\pm i\}$. So $e(S^{c,2})=2(q-3)$.
  \item $S^{c,3}$ corresponds to the points in the conic bundle with $a+d=0$. So these points are given by
  $ad=1+t^2$, $a+d=0$, which gives the affine hyperbola $-a^2=1+t^2$, from which we must remove
  $t=0,\pm i$. The contribution is $e(S^{c,3})=q-5$.
 \end{enumerate}
  Thus $e(S^c)=e(S^{c,1})- e(S^{c,2})- e(S^{c,3})=(q+1)(q-3)-2(q-3)-(q-5)=q^2-5q+8$.
\end{itemize}
Finally we get
  $$
  e(S)= e(S^a)+ e(S^b)+ e(S^c)= 2(q-1)+4(q-1)+(q^2-5q+8)= q^2+q+2
 $$
and (using again the fact that the stabiliser of $\xi$ in $\PGL(2,\CC)$ is isomorphic to $\CC$)
 $$
 e(\overline{X}{}''_{3})=q(q^2+q+2),\ \ e(X_3'')=q(q^2-1)(q^2+q+2)= q^5 + q^4 + q^3- q^2  -2 q.
 $$
Thus
$$e(\overline{X}_3)=e(\overline{X}{}_3')+e(\overline{X}{}_3'')=q(q^2+3q)=q^3 +3 q^2$$
and
 $$
 e(X_{3})=e(X'_{3})+e(X''_{3})= q(q^2-1)(q^2+3q)=q^5  + 3 q^4 - q^3 -3 q^2.
 $$

\subsection{E-polynomials for $X_{4}$, $\overline{X}_{4,\lambda}$ and $\overline{X}_4$}\label{4.5}

Finally consider
$$X_{4}=\left\{(A,B)\in X \ | \ AB\sim\left(\begin{array}{cc}
      \lambda & 0 \\
      0 & \lambda^{-1}
    \end{array}
  \right) BA, \lambda\ne 0,\pm1\right\},$$
  $$\overline{X}_{4,\lambda}=\left\{(A,B)\in X\ | \ AB=\left(\begin{array}{cc}
      \lambda & 0 \\
      0 & \lambda^{-1}
    \end{array}
  \right) BA\right\}, \ \ \lambda\in \CC\setminus \{0,\pm1\} $$
  and
  $$\overline{X}_{4}=\left\{(A,B,\lambda)\ | \ AB=\left(\begin{array}{cc}
      \lambda & 0 \\
      0 & \lambda^{-1}
    \end{array}
  \right) BA,\lambda\ne0,\pm1, \,A,\,B \in \SL(2,\CC)\right\}.$$
  Write $\xi=\left(\begin{array}{cc}\lambda&0\\0&\lambda^{-1}\end{array}\right)$. For
$(A,B)\in \overline{X}_{4,\lambda}$, we have $\tr B=\tr \xi B$ and $\tr A=\tr \xi^{-1}A$, therefore
 $$
 A=\left(
    \begin{array}{cc}
      a & b \\
      c & \lambda^{-1}a
    \end{array}
  \right) \quad \text{and}  \quad B=\left(
    \begin{array}{cc}
      x & y \\
      z & \lambda x
    \end{array}
  \right).
$$

Now $AB=\xi BA$ means
$$
\left(
    \begin{array}{cc}
      ax+bz & ay+\lambda bx \\
      cx+\lambda^{-1}az & cy+ax
    \end{array}
  \right)=
  \left(
    \begin{array}{cc}
      \lambda(ax+cy) & \lambda bx+ay \\
      \lambda^{-1}az+cx & \lambda^{-1}(ax+bz)
    \end{array}
  \right).
$$
Combining this with the equations obtained from the determinants we have the following equations for $X_{4}$:
 \begin{eqnarray}
 \lambda^{-1}a^{2}-bc&=&1\label{eq:x4-1}\\
 \lambda x^{2}-yz&=&1\label{eq:x4-2}\\
 ax+bz&=&\lambda(ax+cy).\label{eq:x4-3}
 \end{eqnarray}
It cannot happen that $b=0$, $y=0$ simultaneously, since in this case $a\ne0$, $x\neq 0$, but then
$\lambda=1$ by \eqref{eq:x4-3}, which is a contradiction.

We shall split $X_{4}$ into strata $X_4^i$ (with corresponding strata for $\overline{X}_{4,\lambda}$ and $\overline{X}_4$) and construct a slice $S^i$ for each stratum $\overline{X}{}_{4}^i$. We will then have (noting that the stabiliser of $\xi$ in $\PGL(2,\CC)$ is isomorphic to $\CC^*$ and writing $S^i_\lambda$ for $S^i\cap\overline{X}{}^i_{4,\lambda}$)
 $$
 \overline{X}{}_{4,\lambda}^i\cong\CC^*\times S^i_\lambda,\ \ \overline{X}{}_4^i\cong\CC^*\times S^i,
 $$
with corresponding formulae for the E-polynomials. There is an action of $\ZZ_2$ on $\overline{X}_4$ given by $(A,B,\lambda)\mapsto(P_0AP_0^{-1},P_0BP_0^{-1},\lambda^{-1})$, where $P_0=\left(\begin{array}{cc}0&1\\1&0\end{array}\right)$. Provided $S^i$ is invariant under this action, we have
$$X_4^i\cong(\PGL(2,\CC)\times S^i)/\ZZ_2.$$
Since $e(\PGL(2,\CC))^-=0$ by Proposition \ref{prop:rem:extra}, it follows that
 \begin{equation}\label{eqn:x4}
 e(X_4^i)=e(\PGL(2,\CC))e(S^i/\ZZ_2)=(q^3-q)e(S_i)^+\, .
 \end{equation}

In describing the strata, we shall always assume that $AB=\xi BA$ and equations \eqref{eq:x4-1}, \eqref{eq:x4-2} and \eqref{eq:x4-3} hold.
\subsubsection{$X_4^0:=\{(A,B) \in X_4 \ | \  b=c=0\}$}
Equations (\ref{eq:x4-1}), (\ref{eq:x4-2}), (\ref{eq:x4-3}) specialize to $a^{2}=\lambda$,
$x=0$ and $yz=-1$. Rescaling the two basis vectors, we can assume that
$$
A=
  \left(
    \begin{array}{cc}
      a & 0\\
      0 & a^{-1}\\
    \end{array}
  \right),\ \ B= \left(
    \begin{array}{cc}
     0 & i\\
    i & 0\\
    \end{array}
  \right).
 $$
Note that $a\in \CC\setminus\{0,\pm 1,\pm i\}$. This gives our slice $S^0$, and $S^0_\lambda$ consists of the two points given by $a^2=\lambda$. So
  \begin{eqnarray*}
  e(\overline{X}{}_{4,\lambda}^0) &= &2(q-1)=2q-2,\\ e(\overline{X}{}^0_4) &=& (q-1)(q-5)=q^2 - 6 q +5
  \end{eqnarray*}
and, by \eqref{eqn:x4},
 $$
e(X_{4}^{0})=(q^3-q)e((\CC\setminus\{0,\pm 1,\pm i\})/\ZZ_2)=
(q^3-q)(q-3)= q^4-3q^3-q^2+3q \, .
 $$

\subsubsection{$X_4^1:=\{(A,B) \in X_4 \ | \  y=z=0\}$}
Analogously
\begin{eqnarray*}
 e(\overline{X}{}_{4,\lambda}^1) &=& 2(q-1),\\
  e(\overline{X}{}^1_4)&=&(q-1)(q-5),\\
  e(X_{4}^{1})&=& q^4-3q^3-q^2+3q \, .
 \end{eqnarray*}

\subsubsection{$X_4^2:=\{(A,B) \in X_4 \ | \  b=0, c\ne0 \} \cup \{(A,B) \in X_4 \ | \ c=0, b\ne0\}$}
In the case $b=0$, rescaling fixes $c=1$.
Equations (\ref{eq:x4-1}), (\ref{eq:x4-2}), (\ref{eq:x4-3}) in this situation are
\begin{eqnarray}
 a^{2} &=&\lambda\label{eq:x4'-1}\\
a^{2}x^{2}-yz &=&1\label{eq:x4'-2}\\
(1-\lambda)ax &=&\lambda y\label{eq:x4'-3}.
\end{eqnarray}
Equation (\ref{eq:x4'-1}) implies that $a\neq 0,\pm 1, \pm i$.
Equation (\ref{eq:x4'-3}) allows us to obtain $y$ uniquely. Note that
(\ref{eq:x4'-2}) and (\ref{eq:x4'-3}) imply that $x\ne0$ and $y\neq 0$.
Equation (\ref{eq:x4'-2}) uniquely determines $z$. So
we are reduced to considering $x\in \CC^\ast$ and
$a \in \CC\setminus\{0,\pm 1, \pm i\}$, which defines our slice $S^2$.
Moreover, for $S^2_\lambda$, we have again $a^2=\lambda$, so we get two copies of $\CC^*$.
Since we have a second component given by $c=0$, $b\neq 0$, we must double everything and we get
 \begin{eqnarray*}
 e(\overline{X}{}_{4,\lambda}^2)&=&4(q-1)^2=4 q^2 - 8 q +4,\\
 e(\overline{X}{}^2_4)&=& 2(q-1)^2(q-5)=2 q^3 - 14 q^2+ 22 q-10.
 \end{eqnarray*}

The $\ZZ_2$-action interchanges the two components, so we simply consider one of them and
get $X_4^2\cong\PGL(2,\CC)\times\CC^*\times (\CC\setminus\{0,\pm 1,\pm i\})$. Hence
 $$
 e(X^{2}_{4})=(q^3-q) (q-1)(q-5)=q^{5}-6q^{4}+4q^{3}+6q^{2}-5q.
 $$

\subsubsection{$X_4^3:=\{(A,B) \in X_4 \ | \  y=0,z\ne0 \}\cup \{(A,B) \in X_4 \ | \  z=0,y\ne0\}$}
This runs similarly to $X^{2}_{4}$, so
\begin{eqnarray*}
 e(\overline{X}{}_{4,\lambda}^3) &=& 4(q-1)^2=4 q^2 - 8 q +4,\\
 e(\overline{X}{}^3_4)&=& 2(q-1)^2(q-5)=2 q^3 - 14 q^2 + 22 q -10,\\
 e(X^{3}_{4})&=&q^{5}-6q^{4}+4q^{3}+6q^{2}-5q.
 \end{eqnarray*}

\subsubsection{$X_4^4:=\{(A,B) \in X_4 \ | \  b=z=0\} \cup \{(A,B) \in X_4 \ | \  c=y=0\}$}
Note that it is impossible that $b=0$ and $y=0$ (or $c=0$ and $z=0$) simultaneously. Therefore,
this gives exactly the intersection of the previous two strata
  $$
  X_4^4 = X_4^2 \cap X_4^3\, .
  $$
For the first component, we scale so that $c=1$.
The equations (\ref{eq:x4-1}), (\ref{eq:x4-2}), (\ref{eq:x4-3}) in this situation
give $a^2=\lambda$, $x^2=\lambda^{-1}$ and $(1-\lambda) ax =  \lambda y$.
Therefore $a\in \CC\backslash\{0,\pm 1,\pm i\}$, $x=\pm \frac{1}{a}$ and
$y=\pm \frac{1-a^{2}}{a^{2}}$. The slice $S^4$ is parameterised by $a$ and $x$,
while $S^4_\lambda$ consists of $4$ points. Doubling up to take into consideration the
two components, we get
\begin{eqnarray*}
 e(\overline{X}{}_{4,\lambda}^4) &=& 8(q-1)=8q-8,\\
 e(\overline{X}{}^4_4)&=&4(q-1)(q-5)= 4 q^2- 24 q +20 .
\end{eqnarray*}
Moreover
 $$
 e(X^{4}_{4})  =2(q^3-q)(q-5)= 2 q^4- 10 q^3- 2 q^2 +10 q\, .
 $$
Note that
 $$
 e(X^2_4 \cup X^3_4)= e(X^2_4)+ e(X^3_4)- e(X^4_4) \, .
 $$

\subsubsection{The case where $b,c,y,z$ are all $\neq 0$}
There will be several contributions in this case.
We begin by scaling so that $b=1$.
The equations (\ref{eq:x4-1}), (\ref{eq:x4-2}), (\ref{eq:x4-3}) are now
\begin{eqnarray}
c&=&\lambda^{-1}a^{2}-1\label{eq:x4''-1}\\
1&=&\lambda x^{2}-yz\label{eq:x4''-2}\\
z&=&(\lambda-1)ax +\lambda cy\label{eq:x4''-3}
\end{eqnarray}
and the condition $c\neq 0$ translates into $a^2\neq \lambda$.

Using (\ref{eq:x4''-1}) in (\ref{eq:x4''-3}), we get
$$
z=(\lambda-1)ax+ a^{2}y-\lambda y
$$
and then (\ref{eq:x4''-2}) gives
\begin{equation}\label{eq:x4''-conics}
 \lambda x^2 +a  (1-\lambda) x y + (\lambda -a^2) y^2=1.
 \end{equation}
This is a family of conics parameterised by $(x,y)$ over the plane $(a,\lambda)$, $a^2\neq \lambda$.
We continue stratifying the space $X_{4}$
with respect to the possibilities of these conics being degenerate or non-degenerate.

The discriminant is
\begin{equation}\label{eq:X4''-dis}
D:=4\lambda (a^{2}-\lambda)+(\lambda -1)^{2}a^{2}=((\lambda+1)a-2\lambda)((\lambda+1)a+2\lambda).
\end{equation}
We consider below
\begin{itemize}
\item $X^{5}_{4}$ corresponding to $D=0$ (degenerate conics) and
\item $X^{6}_{4}$, $X^{7}_{4}$ and $X^{8}_{4}$ for $D\neq 0$ (non-degenerate conics).
\end{itemize}

We shall also need to make explicit the $\ZZ_2$-action. This acts on coordinates as follows:
 \begin{eqnarray}\label{eqn:Z2-action-here}
  a &\mapsto & \lambda^{-1} a \nonumber\\ c &\mapsto & b=1 \ \mapsto \ c \nonumber\\
  b= 1 &\mapsto & c \ \mapsto \ 1 \nonumber\\
  x &\mapsto & \lambda x \\ y &\mapsto & z \ \mapsto \ \frac{z}{c}= \frac{(\lambda-1)ax}{c}
 +\lambda y =\lambda y + \frac{\lambda (\lambda-1) a}{a^2-\lambda} x \nonumber\\
 z &\mapsto & y \ \mapsto \ yc\, . \nonumber
 \end{eqnarray}
Recall that the $\ZZ_2$-action is given by conjugation with $\left(\begin{array}{cc} 0 & 1 \\ 1 & 0 \end{array}\right)$ together with the slice fixing condition which makes $b=1$.

\subsubsection{$X^{5}_{4}:=\{(A,B) \in X_4 \ | \  c,y,z \neq 0,\, b=1,\, D=0\}$} \label{4.5.7}
The discriminant of the conics in (\ref{eq:x4''-conics}) is zero over two lines given by
\begin{equation*}
a=\pm\frac{2\lambda}{\lambda+1}.
\end{equation*}
Note that $c=0 \iff a^2=\lambda
\iff (\lambda-1)^2=0$, which does not hold. So $c\neq 0$ automatically.

Suppose
 \begin{equation} \label{eqn:REF}
 a=\frac{2\lambda}{\lambda+1};
 \end{equation}
the other situation $a=-\frac{2\lambda}{\lambda+1}$ is similar.
The equation of the family of conics in (\ref{eq:x4''-conics}) is then
 $$
 \lambda x^2 +\frac{2\lambda}{\lambda+1} x y (1-\lambda) + \left(\lambda -\left(\frac{2\lambda}{\lambda+1}\right)^{2} \right)y^2=1,
 $$
that is
 \begin{equation*}
 \lambda x^{2}-\frac{2\lambda(\lambda-1)}{\lambda+1}xy+\lambda\frac{(\lambda-1)^{2}}{(\lambda+1)^{2}}y^{2}=1.
 \end{equation*}
Hence
 \begin{equation}\label{eq:x4''-conic0}
 \left(x- \frac{\lambda-1}{\lambda+1}y\right)^2=\frac{1}{\lambda}.
 \end{equation}
These are two parallel lines. Recall that we have to impose the condition $yz\neq 0$,
i.e., $\lambda x^2\neq 1$. This
means that $x\neq \pm \frac{1}{\sqrt{\lambda}}$.

Set $\mu:=x-\frac{\lambda-1}{\lambda+1} y$, so that $\mu^{2}=\lambda^{-1}$ by \eqref{eq:x4''-conic0} and
$\mu\in \CC\backslash\{ 0,\pm 1,\pm i\}$.
The condition $\lambda x^2\ne1$ becomes $x\ne\pm\mu$ and the map $(x,\mu) \mapsto (x,y,\lambda)$ is an isomorphism.
This gives our slice $S^5$ with $e(S^5)=(q-2)(q-5)$, while $e(S^5_\lambda)=2(q-2)$. So (recalling that we have two
components of this type),
\begin{eqnarray*}
e(\overline{X}{}_{4,\lambda}^5)&=&4(q-1)(q-2)=4 q^2 - 12 q +8,\\
e(\overline{X}{}^5_4)&=&2(q-1)(q-2)(q-5)= 2 q^3 - 16 q^2 + 34 q -20.
\end{eqnarray*}

The $\ZZ_2$-action leaves each of the two components invariant. We can therefore consider the action
of $\ZZ_2$ on the slice parameterised by $(x,\mu)$. The action maps $x\mapsto \mu^{-2}x$ and
 \begin{eqnarray*}
  \mu=x-\frac{\lambda-1}{\lambda+1}y & \mapsto & \lambda x -\frac{\lambda^{-1}-1}{\lambda^{-1}+1}
  \left( \lambda y + \frac{\lambda (\lambda-1)a}{a^2-\lambda} x\right) \\
   & & = \lambda x + \lambda \left( \frac{\lambda-1}{\lambda+1}y - 2 x\right) \\
    & & = -\lambda \left(x-\frac{\lambda-1}{\lambda+1}y \right) \\
    & & = -\lambda \mu^2=-\mu^{-1}\, ,
 \end{eqnarray*}
using (\ref{eqn:REF}).

We make a change of variable $X=x/\mu$ so that $X\neq \pm 1$ and $\ZZ_2$ acts as $X\mapsto -X$.
Using the fibration
 $$
 F=\CC\backslash\{\pm 1\} \longrightarrow E \longrightarrow \CC\backslash\{ 0,\pm 1,\pm i\}=B,
 $$
where $E=F\times B=\{(X,\mu)|X\ne\pm1,\mu\ne0,\pm1,\pm i\}$, we have
$e(F)^+=q-1$, $e(F)^-=-1$, $e(B)^+=q-3$, $e(B)^-=-2$.
So, taking into account that we have two components corresponding to the two possibilities for $a$, and
using \eqref{eqn:x4},
 \begin{eqnarray*}
 e(X^{5}_{4})&=&2 (q^3-q)\big((q-1)(q-3)+2 \big)=2(q^3-q)(q^2-4q+5)\\&=& 2q^5- 8 q^4 + 8 q^3 + 8 q^2 -10 q.
\end{eqnarray*}

\subsubsection{$X_4^6$.} Now consider the case $D\ne0$.
{}From equation (\ref{eq:X4''-dis}) this is equivalent to
 \begin{equation}\label{eq:X4-nocurve}
 a\neq \pm \frac{2\lambda}{\lambda+1}.
 \end{equation}
Recall the equation (\ref{eq:x4''-conics}) defining our family of conics over an open subset
of the plane parameterised by $(a,\lambda)$, $a^2\neq \lambda$. We complete each conic to a projective conic in
$\PP^{2}$ with coordinates $[u:x:y]$,
 $$
 \lambda x^2 +a (1-\lambda) x y + (\lambda -a^2) y^2=u^2.
 $$
We define $X_4^6, \overline{X}{}_4^6$ and $\overline{X}{}_{4,\lambda}^6$ as the corresponding $\PP^1$-fibrations.
In particular, we have a fibration
 \begin{equation}\label{eqn:S6}
 \PP^1 \to S^6 \to \{(a,\lambda) \ | \ \lambda\neq 0,\pm 1; \;
 a^2\neq \lambda; \; a\neq \pm 2\lambda/(\lambda+1) \}\, .
 \end{equation}
The plane $(a,\lambda)$ with $\lambda\neq 0,\pm 1$ and satisfying (\ref{eq:X4-nocurve}) has
E-polynomial $(q-2)(q-3)$. We remove
the subset $a^2=\lambda$, with $a\neq 0,\pm 1,\pm i$, hence getting
$$(q-2)(q-3)-(q-5)=q^2-6q+11.$$
The resulting variety is our slice $S^6$ and $e(S^6)=(q+1)(q^2-6q+11)$.
Moreover $S_\lambda^6$ is a conic bundle over $\CC\setminus\{4 \text{ points}\}$,
so $e(S^6_\lambda)=(q+1)(q-4)$. Thus
\begin{eqnarray*}
 e(\overline{X}{}_{4,\lambda}^6) &=& (q-1)(q+1)(q-4)=q^3 - 4 q^2 - q + 4,\\
 e(\overline{X}{}^6_4)&=&(q-1)(q+1)(q^2-6q+11)=q^4 - 6 q^3 + 10 q^2+ 6 q -11 .
 \end{eqnarray*}

The action of $\ZZ_2$ takes $(a,\lambda) \mapsto (\lambda^{-1}a,\lambda^{-1})$, and takes
each fibre of the conic bundle to another fibre, so defining a conic bundle over the plane
quotiented by $\ZZ_2$. To compute the latter,
set $\mu^{2}:=\lambda$ and $A:=\mu^{-1}a$, so the action by $\ZZ_{2}$ sends
$\mu\mapsto \mu^{-1}$ and $A$ remains invariant.
The equation in (\ref{eq:X4-nocurve}) is then
 $$
 A\neq \pm \frac{2}{\mu+\mu^{-1}}\, .
 $$
Also $a^2\neq \lambda$ means $A\neq \pm 1$.
We now change variables, writing $s=\mu+\mu^{-1}$, to take
account of the $\ZZ_2$-action. We still have to quotient by $(A,s)\mapsto (-A,-s)$, due
to the sign indeterminacy of $\mu$.
Let $B=As$ and $S=s^{2}$, so that the variables $(B,S)$
define the required quotient, with the conditions
 $$
 A\neq \pm \frac{2}{s},\pm 1 \Longleftrightarrow B\neq \pm 2, B^2 \neq S
 $$
and
 $$
 \lambda\neq 0,\pm 1 \iff \mu\neq 0,\pm 1, \pm i \iff s\neq 0,\pm 2 \iff S\neq 0, 4.
 $$

Thus $S^6/\ZZ_2$ consists of a family of conics over $(\CC\backslash\{\pm 2\})\times (\CC\setminus\{0,4\})$ minus
the set $S=B^2$, which is isomorphic to $\CC\setminus\{0,\pm 2\}$.
The E-polynomial for the base is $(q-2)^2-(q-3)=q^2-5q+7$. Finally the E-polynomial is
 $$
 e(X^{6}_{4})=(q^3-q)(q+1)(q^{2}-5q+7) = q^6-4q^5+q^4+11q^3-2q^2-7q \,  .
 $$

\subsubsection{$X^{7}_{4}$}
In $X^{6}_{4}$, we have added points when considering the projective conics which should not be included in the computation of $e(X_4)$. We now remove these points. Consider the sets $X_4^7$,
$\overline{X}{}_4^7$, $\overline{X}{}_{4,\lambda}^7$, $S^7$
corresponding to the points in each of the conics over the plane (\ref{eqn:S6}) satisfying the condition
$yz=0\Leftrightarrow\lambda x^2=1$. This means the slice $S^7$ is parameterised by $(a,\lambda,x,y)$ subject to
$\lambda\neq 0,\pm 1$, $a\neq \pm \frac{2\lambda}{1+\lambda}$, $a^2\neq \lambda$,
$\lambda x^2=1$ and
$a (1-\lambda) x y + (\lambda -a^2) y^2=0$. Therefore
$\lambda=x^{-2}$, $x\neq 0,\pm 1,\pm i$, and $y=0$ or $y=\frac{a(1-\lambda)}{a^2 -\lambda} x$.

We have two contributions:
\begin{itemize}
 \item $a=0$, $x\neq 0,\pm 1,\pm i$, $y=0$.
 \item $a\neq 0$. Then the two values of $y$ are different.
 Also
 $$
 a \neq \pm \frac{2x^{-1}}{x+x^{-1}}\, , a \neq \pm x^{-1}.
 $$
 \end{itemize}
For fixed $\lambda$, the case $a=0$ yields $2$ points, while $a\ne0$ gives $2$ values for $x$,
each of which gives two
values of $y$ and there are $5$ excluded values for $a$. So $e(S^7_\lambda)=2+4(q-5)=2(2q-9)$.

For variable $\lambda$, when $a=0$, we can take $x$ as a parameter, giving a contribution of $q-5$.
When $a\ne0$, we can take
$a$ and $x$ as parameters and we have $2$ values of $y$ for each $(a,x)$. So $e(S^7)=2(q-5)^2+q-5=(q-5)(2q-9)$.

This means that
\begin{eqnarray*}
  e(\overline{X}{}_{4,\lambda}^7)&=&2(q-1)(2q-9) = 4q^2 - 22q + 18, \\
 e(\overline{X}{}^7_4)&=&(q-1)(q-5)(2q-9)= 2 q^3 - 21 q^2 + 64 q-45.
\end{eqnarray*}

The action of $\ZZ_2$ on $S^7$ sends $x\mapsto \lambda x= x^{-1}$, $a\mapsto x^{2}a$ (so $A=ax$ is invariant)
and $y\mapsto  \frac{z}{c}= \lambda y + \frac{\lambda (\lambda-1)a}{a^2-\lambda} x$.
So
  $$
 y=0\mapsto \frac{\lambda (\lambda-1)a}{a^2-\lambda}x =
 \frac{\lambda^{-1} a (1-\lambda^{-1})}{\lambda^{-2}a^2-\lambda^{-1}} x^{-1}\, .
 $$
For $a=0$, we get simply $x\mapsto x^{-1}$, giving a contribution of $q-3$ to the quotient. For $a\ne0$,
the two lines corresponding to the two values $y\in \{0, \frac{a(1-\lambda)}{a^2 -\lambda} x\}$ are interchanged.
Thus we just have to compute the
E-polynomial of the variety with $y=0$. This is parameterised by $(x,A)$, subject to the conditions
$A\neq \pm \frac{2}{x+x^{-1}}$, $A\neq 0,\pm 1$. Then
$x\neq 0,\pm 1,\pm i$, $A\neq 0, \pm 1, \pm \frac2{x+x^{-1}}$.
As before, the contribution is $(q-5)^2$. So
\begin{eqnarray*}
 e(X^{7}_{4})&=&(q^3-q)\big( (q-5)^2+(q-3) \big)=(q^3 -q)(q^2-9q +22)\\
&=& q^5 -9 q^4 + 21 q^3+9q^2 -22 q\, .
\end{eqnarray*}

\subsubsection{$X^{8}_{4}$} \label{4.5.10}
It now just remains to take out from $X^{6}_{4}$ the points at infinity in each conic.
We consider the sets $X_4^8$, $\overline{X}{}_4^8$, $\overline{X}{}_{4,\lambda}^8$, $S^8$
corresponding to the points at infinity in each of the conics in the projective conic bundle.
For each value $(a,\lambda)$, these points are given by
 $$
 (\lambda-a^{2})y^2+a(1-\lambda)xy+\lambda x^2=0,
 $$
in projective coordinates $[x:y]$. Put $Y=y/x$, so the equation for $S^8$ is
 $$
 (\lambda-a^{2})Y^2+a(1-\lambda)Y+\lambda=0,
 $$
where the variables are $(a, \lambda, Y)$, $\lambda \neq 0,\pm 1$, $a\neq \pm \frac{2\lambda}{1+\lambda}$,
$a^2\neq \lambda$.

Note that we know that $a^{2}\neq\lambda$, so we may rewrite the eqation as
 $$
 Y^2+\frac{a(1-\lambda)}{\lambda-a^{2}}Y+\frac{\lambda}{\lambda-a^{2}}=0 \, .
 $$
Equivalently,
 \begin{equation}\label{eq:x4''-conic2}
 \left( Y+\frac12 \frac{a(1-\lambda)}{\lambda-a^{2}} \right)^{2}=\frac{D}{4(\lambda-a^2)^2} \, ,
 \end{equation}
where $D=(a(1+\lambda) + 2\lambda)(a(1+\lambda) - 2\lambda)$.
Write $e=\frac12 \frac{a(1-\lambda)}{\lambda-a^{2}}$. By the action of $\ZZ_{2}$,
$e\mapsto -e$. Now
 $$
 Y=\frac{y}{x}\mapsto \frac{z/c}{\lambda x}=  \frac{y}{x}+\frac{(\lambda-1)a}{a^2-\lambda} =Y+2e\, .
 $$
Introduce a new variable $\alpha =Y+e$. Then $\alpha$ remains invariant by the action of $\ZZ_{2}$.
The equation is now
  $$
  \alpha^{2}=\frac{D}{4(\lambda-a^2)^2}.
  $$

Consider the change of variables $(a,\alpha) \mapsto (A,\Omega)$ given by
\begin{eqnarray*}
A&:=&\frac{(1+\lambda)a}{\lambda}\\
\Omega&:=&\frac{2\alpha(\lambda-a^2)}{\lambda},
\end{eqnarray*}
possible because $a^2\neq \lambda$.
These are invariant under the action of $\ZZ_{2}$.
The equation (\ref{eq:x4''-conic2}) is now
 \begin{equation}\label{eqn:Omega+A}
 \Omega^{2}=A^{2}-4\, ,
 \end{equation}
and the conditions are $\lambda\neq 0,\pm 1$, $A\neq \pm 2$ and $A^2\ne\frac{(1+\lambda)^2}{\lambda}$.
This is our final equation for $S^8$. For fixed $\lambda$, we have a conic, from which we must remove the
two points at infinity and $6$ further points corresponding to the excluded values of $A$; so $e(S^8_\lambda)=q-7$, and
 $$
 e(\overline{X}{}^8_{4,\lambda})=(q-1)(q-7)=q^2 - 8 q +7.
 $$
To compute $e(S^8)$, we take the contribution of (\ref{eqn:Omega+A}) with $\lambda\neq 0,\pm1$, $A\neq \pm 2$,
which gives $(q-3)^2$, and subtract the contribution of $A^2=\frac{(1+\lambda)^2}{\lambda}$. Note
that then $\Omega^2=\frac{(1-\lambda)^2}{\lambda}$, so $A=\pm \frac{1+\lambda}{1-\lambda} \Omega$.
Writing $B=\frac{1+\lambda}{A}$, we have the equation $\lambda=B^2$, with $B\neq 0,\pm 1,\pm i$. Thus,
$e(S^8)=(q-3)^2-2(q-5)=q^2-8q+19$ and
 $$
  e(\overline{X}{}^8_4)=(q-1)(q^2-8q+19)=q^3  - 9 q^2 + 27 q-19.
  $$

Finally consider the action of $\ZZ_2$. As already observed, we find that this action leaves $A$, $\Omega$ invariant
and sends $\lambda\mapsto\lambda^{-1}$. We can therefore take $s=\lambda+\lambda^{-1}$ as a parameter on the
quotient. Ignoring the final condition $A^2\ne\frac{(1+\lambda)^2}{\lambda}$ for the moment, we have a conic with
$4$ points deleted and $s$ has $2$ excluded values, so we obtain a contribution of $(q-3)(q-2)$. {}From this we
have to delete the subvariety defined by $A^2=s+2$. This subvariety is a conic
$\Omega^{2}=A^{2}-4$ with the $2$ points at infinity and the $4$ points $(A,\Omega)=(0,\pm2i), (\pm2,0)$ removed;
this contributes $q-5$. Thus
 \begin{eqnarray*}
 e(X^{8}_{4})&=&(q^3-q) \big( (q-3)(q-2)-(q-5) \big)\\&=&(q^3-q)(q^2-6q+11) \\&=&q^5- 6 q^4+ 10 q^3+ 6 q^2-11 q.
 \end{eqnarray*}

\bigskip

We obtain finally
 \begin{eqnarray}
 \notag e(X_{4}) &=& e(X^{0}_{4})+e(X^{1}_{4})+e(X^{2}_{4})+e(X^{3}_{4})-e(X^{4}_{4})+
 e(X^{5}_{4})+e(X^{6}_{4})-e(X^{7}_{4})-e(X^{8}_{4}) \\ &=&(q^3-q)(q^3-2q^2-3q-2)
 = q^6 - 2 q^5 - 4 q^4 + 3 q^2 +2 q\, ,
 \label{eqn:e(X_4)}
 \end{eqnarray}
 and similarly
 \begin{equation}\label{eqn:x4lambda}
 e(\overline{X}_{4,\lambda})=(q-1)(q^2+4q+1)=q^3 + 3 q^2 - 3 q -1
 \end{equation}
 and
 \begin{equation}\label{eqn:e(x4-bar)}
 e(\overline{X}_4)= q^4-3q^3-6q^2+5q+3\,.
 \end{equation}

Note also that
 $$
 e(X)=e(X_0)+e(X_1)+e(X_2)+e(X_3)+e(X_4)=q^6- 2q^4 +q^2 =(q^3-q)^2,
 $$
which agrees with the known formula for $e(\SL(2,\CC)^2)$.

\section{$\SL(2,\CC)$-character varieties for $g=1$}\label{charvarg=1}

We now use the previous work to calculate the E-polynomials of the character varieties
 \begin{eqnarray*}
 \cM_\xi &=& \{ (A,B)\in SL(2,\CC) \, | \, [A,B] =\xi\}// \mbox{Stab}(\xi) \\
 &=& \{(A,B)\in SL(2,\CC) \, | \, [A,B] \in \cC\} // \PGL(2,\CC)\, , \\
 \end{eqnarray*}
where $\cC$ is the conjugacy class of $\xi\in \SL(2,\CC)$.

Corresponding to the five strata $X_i$ of section \ref{sl2}, we have five character varieties,
which we shall denote more logically by $\cM_{\Id}$, $\cM_{-\Id}$, $\cM_{J_+}$, $\cM_{J_-}$ and $\cM_\lambda$.

\subsection{Hodge polynomials}
\begin{itemize}
\item $\cM_{\Id}=X_0//\PGL(2,\CC)$.
This character variety contains reducibles (in fact, all elements are reducible), so we have to make a GIT quotient, identifying S-equivalence classes.
The non-diagonalisable orbits have limits in the diagonalisable ones, so every S-equivalence class contains an element of the form
 $$
 (A,B)=\left(\left(\begin{array}{cc}\lambda&0\\0&\lambda^{-1}\end{array}\right),
 \left(\begin{array}{cc}\mu&0\\0&\mu^{-1}\end{array}\right)\right),
 $$
unique up to
$(\lambda, \mu)\mapsto (\lambda^{-1},\mu^{-1})$. We have the fibration $F\to E=F\x B \to B$, where
$\lambda$ parametrizes $F=\CC\setminus\{0\}$ and $\mu$ parametrizes $B=\CC\setminus\{0\}$. So
$e(F)^+=q$, $e(F)^-=-1$, $e(B)^+=q$, $e(B)^-=-1$, and Proposition \ref{prop:fibration} gives
  $$
e(\cM_{\Id})= q^2+1 \, .
  $$

\item {$\cM_{-\Id}=X_1//\PGL(2,\CC)$}.
We have already seen that the character variety is just one point, so
 $$
 e(\cM_{-\Id})=1\, .
 $$

\item {$\cM_{J_+}=X_2//\PGL(2,\CC)$}. Here we put $J_+=\left(\begin{array}{cc} 1 &1 \\ 0 & 1 \end{array}\right)$.
The computations in subsection \ref{4.3} show that
 \begin{eqnarray*}
 \cM_{J_+} &\cong& \CC^* \x \CC^* \setminus \{(\pm 1,\pm 1)\}, \\
 e(\cM_{J_+}) &=& (q-1)^2-4= q^2-2q-3 \, .
 \end{eqnarray*}

\begin{rmk}\label{rmk:cMJ+-red}
Given the shape of the matrices $A,B$ in subsection \ref{4.3}, we see that all representations in
$X_2$ are reducible. Therefore there is a map $\cM_{J_+}\to \cM_{\Id}$ which consists of quotienting
by the involution
$(a,x)\mapsto (a^{-1},x^{-1})$.
\end{rmk}

\item {$\cM_{J_-}=X_3//\PGL(2,\CC)$}. Here we put $J_-=\left(\begin{array}{cc} -1 &1 \\ 0 & -1 \end{array}\right)$.
We can stratify as in subsection \ref{4.4} to show that
 $$
 e(\cM_{J_-})=2(q-1) + (q^2+q+2)= q^2 +3q \, .
 $$

\begin{rmk}\label{rmk:cMJ--red}
 Note that there are no reducibles in $X_3$. If $A,B$ form a reducible representation, then
 there is a common eigenvector $v\in\CC^2$. Then $[A,B](v)=v$, so the only eigenvalue of  $[A,B]$ is $1$.
 Therefore either $(A,B)\in X_0$, or $(A,B)\in X_2$.
\end{rmk}

\item {$\cM_\lambda=\cM_\xi = \overline{X}{}_{4,\lambda} // \CC^*$}.
Here we have set $\xi=\left(\begin{array}{cc} \lambda & 0 \\ 0 & \lambda^{-1} \end{array}\right)$,
$\lambda \neq 0,\pm 1$.
The slices $S^i_\lambda$ constructed in subsection \ref{4.5} give a stratification of $\cM_\lambda$. It follows at once that
 $$
 e(\cM_\lambda)=e(\overline{X}_{4,\lambda})/(q-1)=q^2+4q+1.
 $$
\end{itemize}

\begin{rmk} \label{rem:suggestion-Hausel}
 Note the equality $e(\cM_{J_-})+(q+1) e(\cM_{-\Id}) = e(\cM_{\lambda})$ holds.
 This has been suggested to us by T.\ Hausel, and it is predicted to hold for
 arbitrary genus.
\end{rmk}

\subsection{Poincar\'e polynomials and Hodge numbers}
The information obtained above for the E-polynomials is not sufficient to compute
Poincar\'e polynomials or the $h^{k,p,q}_c$ of the character varieties, since there can be cancellations in the
computation of the E-polynomial. However, using the explicit description and, in one case, the known
Poincar\'e polynomials, we can compute the Hodge numbers (except in the case of $\cM_{J_-}$).

\begin{itemize}
 \item For $\cM_{\Id}$, we have an explicit description.  We know that $h^{k,p,q}_c(\CC^*)=1$ for $(k,p,q)=(2,1,1)$ and
$(k,p,q)=(1,0,0)$ and is otherwise non-zero. So, for $\CC^*\times\CC^*$,  we have non-zero Hodge numbers as follows:
 $h^{4,2,2}_c=1, h^{3,1,1}_c=2, h^{2,0,0}_c=1$.
It is easily checked that, for the action of $\ZZ_2$ on the cohomology of $\CC^*\times \CC^*$, we have
 $(H^*_c)^+=H^2_c\oplus H^4_c\ \ \mbox{and }  (H^*_c)^-=H^3_c$.
It follows that the non-zero Hodge numbers of $\cM_{\Id}$ are
 $$
 h^{4,2,2}_c(\cM_{\Id})=h^{2,0,0}_c(\cM_{\Id})=1.
 $$
 It follows also that the Poincar\'e polynomial $P_t^c$ for compact cohomology is given by $P_t^c(\cM_{Id})=t^4+t^2$.
A similar computation using ordinary cohomology gives $P_t(\cM_{\Id})=1+t^2$ (or we can use
 Poincar\'e duality because $\cM_{\Id}$ is smooth). We can summarise these results in the statement
 $$
 H_c(\cM_{\Id})(q,t)=q^2t^4+t^2.
 $$

\item For $\cM_{-\Id}$, there is nothing to prove.

\item  For $\cM_{J_+}$, we again have an explicit description, from which it follows that the non-zero Hodge numbers are
 $$
 h^{4,2,2}_c(\cM_{J_+})=1,\ \ h^{3,1,1}_c(\cM_{J_+})=2,\ \
 h^{2,0,0}_c(\cM_{J_+})=1,\ \ h^{1,0,0}_c(\cM_{J_+})=4\, .
 $$
The Poincar\'e polynomials are
 $$
 P_t^c(\cM_{J_+})=t^4+2t^3+t^2+4t,\ \ P_t(\cM_{J_+})=4t^3+t^2+2t+1 \, .
 $$
Note that this space is smooth, so Poincar\'e duality
applies to relate the two polynomials. Any one of our formulae implies that
the Euler characteristic is $-4$. We can again summarise our results as
 $$
 H_c(\cM_{J_+})(q,t)=q^2t^4+2qt^3+t^2+4t\, .
 $$

\item
To deal with the case $\cM_{J_-}$, let us use the computations in subsection \ref{4.4} to describe explicitly
$\cM_{J_-}$. First, we have maps $\pi_1:\cM_{J_-} \to \CC$, $\pi_1(A,B)=\Tr A$, and $\pi_2:\cM_{J_-} \to \CC$, $\pi_2(A,B)=\Tr B$. This produces a map
$\pi=\pi_1\x \pi_2:\cM_{J_-} \to \CC\x\CC\setminus \{(0,0)\}$.
Note that $\overline{X}{}_3''$ is the part corresponding to
$\pi_2^{-1}(\CC\setminus \{0\})$. So this latter set is described by the equation
(\ref{eqn:adx}), which is a smooth surface.
By symmetry, $\pi_1^{-1}(\CC\setminus \{0\})$ is also smooth, so $\cM_{J_-}$ is smooth, being covered
by $\pi_1^{-1}(\CC\setminus \{0\})$ and $\pi_2^{-1}(\CC\setminus \{0\})$. It follows that $h^{k,p,q}_c=0$ whenever $p+q>k$.

The map $\pi_2:\cM_{J_-} \to \CC$ is a fibration over
$\CC$ with some singular fibres. The generic fibre is a hyperbola, isomorphic to
$\CC^*$; this conic (\ref{eqn:adx}) degenerates into two (parallel) lines
when $x=\pm 2$; the fibre over $0$ consists of two copies of $\CC^*$, since it corresponds to
$\overline{X}{}_3'$.
Thus the manifold $\cM_{J_-}$ is constructed as follows:
take the surface $\CC\x \PP^1$, blow it up at three points in the fibres over
$0,2,-2$, obtaining thus three nodal fibres, with three nodes, which we call $p_{-2},p_0,p_2$.
Now we remove a bi-section of this non-minimal ruled surface, which passes through $p_{-2},p_2$,
but not through $p_0$. This
bisection cannot be reducible (otherwise, we would not get the correct E-polynomial),
so it has to be a double cover
of $\CC$ ramified exactly at $p_{-2},p_2$. Hence it is isomorphic to $\CC^*$.
Finally, we also remove the point $p_0$. The result is $\cM_{J_-}$.

{}From this it follows that $b_0=1$, $b_1=1$, $b_2=5$ and $b_3=1$. The Euler characteristic is $4$.
The Poincar\'e polynomials are
 $$
 P_t^c(\cM_{J_-})=t^4+ t^3+ 5 t^2+t,\ \ P_t(\cM_{J_-})=t^3+5t^2+t+1 \, .
 $$

 Although the combination of $e(\cM_{J_-})$ and $P_t^c(\cM_{J_-})$ gives considerable information about the values
 of the Hodge numbers, this is not sufficient to determine them completely. In fact $h^{4,2,2}_c(\cM_{J_-})=1$
 and $h^{1,0,0}_c(\cM_{J_-})=1$, and the other non-zero Hodge numbers are either
 $$
 h^{3,1,1}_c(\cM_{J_-})=1,\ \ h^{2,1,1}_c(\cM_{J_-})=4,\ \ h^{2,0,0}_c(\cM_{J_-})=1
 $$
or
 $$
 h^{3,0,0}_c(\cM_{J_-})=1,\ \ h^{2,1,1}_c(\cM_{J_-})=3,\ \ h^{2,0,0}_c(\cM_{J_-})=2.$$

\item
Finally, let us consider
$\cM_\lambda$. Since this is smooth, $h^{k,p,q}_c=0$ whenever $p+q>k$. Moreover $h^4_c=1$ since
$\cM_\lambda$ is connected and $h^0_c=0$ since $\cM_\lambda$ is not compact.
The moduli space $\cM_\lambda$ is homeomorphic to the moduli
space of parabolic $\SL(2,\CC)$-Higgs bundles $\cH_\lambda$ on an elliptic curve $C$.
This is the space $\cN_\alpha^0$ considered in \cite{by} (with $g=n=1$),
for which the Poincar\'e polynomials are
 $$
 P_t(\cN^0_\alpha)=\frac{(1+t^3)^2+t^3(1+t)(t-2)}{(1-t^2)^2}+3t^2=5t^2+1, \ \ P_t^c(\cN^0_\alpha)=t^4+5t^2.
 $$
Comparing this with the formula for $e(\cM_\lambda)$ and noting that all Hodge numbers of this space must be of type $(p,p)$ (see Proposition \ref{prop:pure-type}), we find that the non-zero Hodge numbers are
 $$
 h^{4,2,2}_c(\cM_\lambda)=1,\ \ h^{2,1,1}_c(\cM_\lambda)=4,\ \ h^{2,0,0}_c(\cM_\lambda)=1.
 $$
 Hence
 $$
 H_c(\cM_\lambda)=q^2t^4+qt^2+t^2.
 $$
\end{itemize}

\begin{rmk}
The space $\cH_{\Id}$ is the moduli space of $\SL(2,\CC)$-Higgs bundles on $C$, which is known to be
isomorphic to $C\times\CC/\ZZ_2$, where
the action of $\ZZ_2$ is given by $(\lambda,a)\mapsto(\lambda^{-1},-a)$.
The non-zero Hodge numbers of $C\times\CC$ are
$h^{4,2,2}_c=h^{3,2,1}_c=h^{3,1,2}_c=h^{2,1,1}_c=1$. Now $\ZZ_2$
acts on $H^4$ and $H^2$ by $+1$ and on $H^3$ by $-1$. So
the non-zero Hodge numbers of the moduli space are  $h^{4,2,2}_c=h^{2,1,1}_c=1$
and the E-polynomial is
  $$
  e(\cH_{\Id})=q^2+q
  $$
(different from that of $\cM_{\Id}$). This result could also be obtained by noting that the moduli space maps to
$C/\ZZ_2\cong\PP^1$ and all the fibres are isomorphic to $\CC$ (although it is not a locally trivial fibration).
\end{rmk}

\subsection{Failure of Mirror Symmetry for $\cM_{J_\pm}$}
Mirror Symmetry predicts an equality of the E-polynomials of the $G$-character varieties
and $G^L$-character varieties, where $G^L$ is the Langlands dual of $G$, for \textit{semisimple
holonomy}.

Mirror Symmetry was proved for the moduli spaces of $G$-Higgs bundles of ranks $2$ and $3$
by Hausel and Thaddeus \cite{hausel-thaddeus:2003}, showing that the mirror partners have equal Hodge numbers.
It was conjectured and proved by Hausel and Rodriguez-Villegas \cite{hausel-rvillegas:2007} that the same
happens for the twisted $G$-character variety.

For $G=\SL(2,\CC)$, we have $G^L=\PGL(2,\CC)$.
We want to show now that the analogous statement to
the conjecture of \cite{hausel-rvillegas:2007} does not hold
for $G$-character varieties with non-semisimple holonomy $\xi=J_\pm$.
We check it for curves of genus $g=1$.
For this, let us compute the E-polynomial of
 $$
 \cM_\xi^L:= \cM_\xi (\PGL(2,\CC))\, ,
 $$
for $\xi=\Id, -\Id, J_+, J_-$ and $\left( \begin{array}{cc} \lambda & 0 \\ 0 & \lambda^{-1}\end{array}\right)$,
with $g=1$.

Note that $\cM_\xi^L$ appears as the quotient of $\cM_\xi$ with the action of $Z=\ZZ_2\x \ZZ_2$
given by $A\mapsto -A$, $B\mapsto -B$, for $(A,B)\in \cM_\xi$.

\begin{itemize}
\item We have $\cM_{\Id}=(\CC^* \x \CC^*)/\ZZ_2$, where $\ZZ_2$ acts as
$(\lambda,\mu) \mapsto (\lambda^{-1},\mu^{-1})$. We quotient now by $\lambda \mapsto -\lambda$ and
$\mu \mapsto -\mu$. As $\CC^*/ \pm 1 \cong \CC^*$, we get that
$\cM_{\Id}^L=\cM_{\Id}/Z  \cong (\CC^* \x \CC^*)/\ZZ_2$. Therefore
$e(\cM_{\Id}^L)=q^2+1$, as expected.

\item $\cM_{-\Id}$ is one point, so $\cM_{-\Id}^L$ is one point also, and $e(\cM_{-\Id}^L)=1$.

\item The discussion in subsection \ref{4.3} produced a description
$\cM_{J_+} = \CC^* \x \CC^* \setminus \{ (\pm 1, \pm 1)\}$.
The action of $Z$ on $(a,x)\in \cM_{J_+}$ is given by
 \begin{equation}\label{eqn:actioon}
 a\mapsto -a\, , \quad x \mapsto -x\, .
 \end{equation}
So $\cM_{J_+}^L = (\CC^* \x \CC^* \setminus \{ (\pm 1, \pm 1)\})/Z \cong \CC^* \x \CC^* \setminus \{ (1, 1)\}$.
Therefore $e(\cM_{J_+}^L)= q^2-2q \neq e(\cM_{J_+})$.

\item We have to work a little bit for $\cM_{J_-}^L$. We decompose $\cM_{J_-}$ in several pieces,
following the discussion in subsection \ref{4.4}. The first piece, corresponding to $\overline{X}{}'_3$,
yields a space $\CC^*\x \{\pm i\}$, and $Z$ acts on $(a,x)\in \CC^*\x \{\pm i\}$ by (\ref{eqn:actioon}).
This produces $(\CC^*\x \{\pm i\})/Z \cong \CC^*$ with E-polynomial $q-1$.

The second piece, corresponding to $S^a$, is described by $a=-d= \pm i$, $x\in \CC^*$, with the action
(\ref{eqn:actioon}). This contributes $q-1$ to the E-polynomial.

Finally, the remainder is given by equation (\ref{eqn:adx}), where
$(a,d,x)$ satisfies $a+d\neq 0$, $x\neq 0$, and subject
to the quotient by (\ref{eqn:actioon}). We do the change of variables $t=(a+d)/x$, where $t\sim -t$. Now
we introduce the variable $\alpha=t^2$, and have the equation $ad-\alpha=1$, modulo $(a,d) \sim (-a,-d)$.
Here $\alpha=ad-1$ is uniquely determined, so we can forget about it, only recalling that $ad-1 \neq 0$. The
resulting space is $\{(a,d) \, | \, a+d \neq 0, ad\neq 1\} / \ZZ_2$, whose E-polynomial is easily computed to
be $q^2-2q+2$.
Hence
 $$
 e(\cM_{J_-}^L)=q^2-2q+2+ 2(q-1)= q^2 \neq e(\cM_{J_-})\, .
 $$

\item The E-polynomial of $\cM_\lambda^L$ has not been computed yet.
The conjecture in \cite{hausel-rvillegas:2007} predicts that $e(\cM_\lambda^L)=q^2+4q+1$. We shall
undertake a full study of the case of $\PGL(2,\CC)$-character varieties in future work, including
this case and many others.
\end{itemize}

\section{The Hodge monodromy representation of $X_4$} \label{sec:representation}

Recall that
 $$
  \overline X_4=\left\{ (A,B,\lambda) \ | \ [A,B]= \left(\begin{array}{cc} \lambda & 0 \\ 0 & \lambda^{-1}
 \end{array}\right), \ \lambda\neq 0,\pm 1\right\}.
 $$
In this section we want to study in detail the fibration
$$
 \pi: \overline X_4 \too \CC\setminus \{0,\pm 1\}\, ,
$$
where $(A,B,\lambda)\mapsto \lambda$. The fibres of $\pi$ are the spaces $\overline{X}_{4,\lambda}$,
all of them diffeomorphic and of balanced type. In particular, all fibres have
the same E-polynomial and by Proposition \ref{prop:pure-type} $\overline X_4$ is also of balanced type.

Consider loops $\gamma_0$, $\gamma_1$, $\gamma_{-1}$
surrounding the points $0,1,-1$ respectively. These are free generators of $\Gamma=H_1(\CC\setminus \{0,\pm 1\})$.
We are interested in computing the Hodge monodromy representation of $\pi$,
  $$
  R(\overline X_4) \in R(\Gamma)[q]\, .
  $$

%
%
%

\begin{thm} \label{thm:R(overline)}
  The actions of $\gamma_{\pm 1}$ are trivial, and the
  action of $\gamma_0$ is an involution. Consider the associated
  quotient $\Gamma\twoheadrightarrow \ZZ_2$. Then
   $$
    R(\overline{X}_4) = (q^3-1) \,  T + (3q^2-3q) \, N \in R(\ZZ_2)[q]\, ,
   $$
   where $T$ is the trivial representation, and $N$ is the non-trivial one.
\end{thm}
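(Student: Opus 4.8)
The plan is to reduce the whole statement to a short algebraic computation by exploiting Proposition \ref{prop:e-total-space}, once the \emph{structural} part of the assertion — that $\rho$ factors through $\Gamma\twoheadrightarrow\ZZ_2$ with $\gamma_0$ the generator and $\gamma_{\pm1}$ trivial — is in place. First I would record that, by the general discussion leading to \eqref{eqn:exxtra}, the monodromy of $\pi$ is abelian, so that it is a representation $\rho\colon\Gamma\to\GL\big(\bigoplus_{k,p}H_c^{k,p,p}(\overline{X}_{4,\lambda})\big)$ of $\Gamma=\ZZ\langle\gamma_0,\gamma_1,\gamma_{-1}\rangle$, and that $F=\overline{X}_{4,\lambda}$ is smooth of balanced type as noted at the start of the section. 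Everything then hinges on identifying this $\ZZ_2$.

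For the structural claim I would pull the fibration back along the double cover $\psi\colon B'=\CC\setminus\{0,\pm1,\pm i\}\to\CC\setminus\{0,\pm1\}$, $\mu\mapsto\mu^2=\lambda$, and show that the pulled-back fibration $\overline{X}'_4\to B'$ has \emph{trivial} monodromy. The point is that every slice $S^i$ of subsection \ref{4.5} was parametrised only up to the choice of a square root of $\lambda$ (the relations $a^2=\lambda$, $x^2=\lambda^{-1}$, $\mu^2=\lambda$, and so on), while the residual $\PP^1$- and conic-bundle factors carry no monodromy by Remark \ref{rem:new}; once $\sqrt{\lambda}=\mu$ is single-valued on $B'$ each stratum therefore trivialises. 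Since $0$ is the only branch point of $\psi$ in the base, the loop $\gamma_0$ becomes the nontrivial deck transformation while $\gamma_{\pm1}$ lift to loops, so $\rho$ factors through $\mathrm{Deck}(B'/B)=\ZZ_2$ with $\gamma_0$ mapping to the generator and $\gamma_{\pm1}$ to the identity; in particular $\gamma_0^2$ acts trivially. This stratum-by-stratum trivialisation over $B'$ is the main obstacle, the delicate point being to verify that no source of monodromy other than these square roots is present (e.g.\ that the two-component slices such as $a=\pm2\lambda/(\lambda+1)$ are not interchanged by any loop).

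Granting this, I would write $R(\overline{X}_4)=a\,T+b\,N$ with $a,b\in\ZZ[q]$, where $a=e(\overline{X}_{4,\lambda})^{inv}$ is the $T$-isotypic (i.e.\ $\gamma_0$-invariant) part. Evaluating at the trivial representation, i.e.\ taking dimensions, returns the E-polynomial of the fibre, so by \eqref{eqn:x4lambda},
$$a+b=e(\overline{X}_{4,\lambda})=q^3+3q^2-3q-1.$$
On the other hand, the monodromy is abelian with finite image and $B'$ is a rational curve of balanced type, so Proposition \ref{prop:e-total-space} applies with $\ell=3$ and $e(F)^{inv}=a$, yielding
$$e(\overline{X}_4)=(q-1)\,a-2\,(q^3+3q^2-3q-1).$$
Comparing with \eqref{eqn:e(x4-bar)} gives $(q-1)a=q^4-q^3-q+1=(q-1)(q^3-1)$, whence $a=q^3-1$ and then $b=3q^2-3q$. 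Because $b\neq0$, the action of $\gamma_0$ is genuinely nontrivial, hence an involution, which completes both halves of the statement and gives $R(\overline{X}_4)=(q^3-1)\,T+(3q^2-3q)\,N$. I would add that one could instead compute $R(\overline{X}_4^i)$ directly on each stratum and sum with the inclusion–exclusion signs of \eqref{eqn:e(X_4)}; this bypasses Proposition \ref{prop:e-total-space} but forces one to track the induced $\ZZ_2$-action on the cohomology of every slice, so I regard the route above as cleaner, with the triviality over $B'$ as the only substantial point to check.
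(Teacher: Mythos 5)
Your proposal is correct, but it genuinely inverts the paper's logic. The paper proves Theorem \ref{thm:R(overline)} stratum by stratum: for each of the nine slices $S^i$ of subsection \ref{4.5} it identifies the geometric action of $\gamma_0$ explicitly (swapping the two roots of $a^2=\lambda$ in $S^0,\dots,S^4$, the two parallel lines in $S^5$, two punctures of the base in $S^6$, the two nodal components in $S^7$, two pairs of deleted points in $S^8$), checks that $\gamma_{\pm1}$ act trivially, and sums $R(\overline{X}{}^i_4)=(q-1)R(S^i)$ with the inclusion--exclusion signs of \eqref{eqn:e(X_4)}, obtaining $(q-1)\big((q^2+q+1)T+3qN\big)$; Proposition \ref{prop:e-total-space} appears only \emph{afterwards}, as a consistency check recovering \eqref{eqn:e(x4-bar)}. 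You instead feed \eqref{eqn:x4lambda} and \eqref{eqn:e(x4-bar)} into Proposition \ref{prop:e-total-space} and solve the two linear equations $a+b=e(F)$ and $(q-1)a-2(a+b)=e(\overline{X}_4)$; this is not circular, because in subsection \ref{4.5} both polynomials were computed by direct parameterisation of the strata with no monodromy input, and your arithmetic $(q-1)a=q^4-q^3-q+1=(q-1)(q^3-1)$ is right. What your route buys is the elimination of all isotypic bookkeeping, at the price of the structural claim, which you correctly isolate as the real content: your double-cover argument works because the only components that could conceivably be permuted by a loop ($b=0$ versus $c=0$ in $S^2,S^3,S^4$; $a=\pm 2\lambda/(\lambda+1)$ in $S^5$; $y=0$ versus $y=a(1-\lambda)x/(a^2-\lambda)$ in $S^7$) are cut out by global algebraic conditions over $B$ and so cannot be interchanged, leaving the square roots of $\lambda$ as the sole source of monodromy; pulling back along $\mu\mapsto\mu^2$ then trivialises every stratum, $\gamma_{\pm1}$ and $\gamma_0^2$ lift to loops in $B'$, so $\rho$ factors through $\ZZ_2$ with finite image --- which you need not only for Proposition \ref{prop:e-total-space} but also, via semisimplicity of finite-group representations, to identify the $T$-coefficient with $e(F)^{inv}$ --- and $b\neq 0$ forces $\gamma_0$ to be a genuine involution. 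Two caveats are worth noting: by consuming both \eqref{eqn:x4lambda} and \eqref{eqn:e(x4-bar)} as inputs you forgo the independent cross-check the paper retains; and the paper's finer stratum-level data $R(\overline{X}{}^i_4)$ is not a detour, since it is reused essentially in the proof of Theorem \ref{thm:R(overline/Z2)} for $\overline{X}_4/\ZZ_2$, where the coarse output of your two-equation method would not suffice.
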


\begin{proof}
According to subsection \ref{4.5}, we have a stratification $\overline X_4=\bigsqcup\overline{X}{}^i_{4}$,
which is compatible with the fibration. Therefore
  $$
  R(\overline{X}_4) =\sum R(\overline{X}{}^i_4) \, .
  $$
Note that we have isomorphisms $\overline{X}{}_{4,\lambda}^i \cong S^i_\lambda \times \CC^\ast$,
so $e(\overline{X}{}_{4,\lambda}^i)= e(S^i_\lambda)(q-1)$. Therefore
$R(\overline{X}{}_4^i)=(q-1) R(S^i)$. Let us compute each $R(S^i)$ individually. We
follow the notations of subsection \ref{4.5}.

 \begin{itemize}
  \item $S_\lambda^0$. The equations are $a^2=\lambda$, where $\lambda$ is fixed. So $S_\lambda^0$
  consists of two points. The actions
  of $\gamma_1$ and $\gamma_{-1}$ are trivial. The action of $\gamma_0$ is an involution on the fibre, as it
  swaps the two points.
  Therefore $R(S^0)=T+N$.

  \item $S_\lambda^{1}$. Exactly as in the previous case, $R(S^1)=T+N$.

  \item $S_{\lambda}^2$. Recall that there are two different (isomorphic) strata, given by
  either $b\neq 0, c=0$ or $b=0, c\neq 0$. The
  equations of the first stratum are $a^2=\lambda$, $y= (1-\lambda) ax/\lambda$, and $z= (\lambda x^2-1)/y$.
  Hence $a$ admits two values, and $x\neq 0$. The elements $\gamma_{\pm1}$ act trivially, and $\gamma_0$
  swaps the two values of $a$. Doubling the contribution, we get $R(S^2)=2(q-1)T+2(q-1)N$.

  \item $S_{\lambda}^3$. Exactly as in the previous case,
  $R(S^3)=2(q-1)T+2(q-1)N$.

  \item $S_{\lambda}^4$.
  Again we have two different (isomorphic) strata. The equations determining one of them are
  $a^2=\lambda$, $x=\pm \frac1a$, $y=\pm \frac{1-a^2}{a^2}$, so it consists of $4$ points.
  The elements $\gamma_{\pm1}$ act trivially, and $\gamma_0$
  swaps the two values of $a$. Doubling the contribution, we get $R(S^4)=4T+4N$.

  \item $S_{\lambda}^5$.
  Now $a=\pm \frac{2\lambda}{\lambda+1}$ has two values, giving two (isomorphic) strata. For each value,
  we have the equation:
  $$
  \left(x - \frac{\lambda-1}{\lambda+1} y \right)^2 =\frac1\lambda\, ,
  $$
  with the conditions $x\neq \pm \frac{1}{\sqrt{\lambda}}$. Hence we get two parallel lines with two points
  removed in each of them. The actions
 of $\gamma_1$ and $\gamma_{-1}$ are trivial, and the action of $\gamma_0$ interchanges the two lines.
 Doubling the contribution, we get
 $$R(S^5)=2(q-2)T+2(q-2)N.$$

 \item $S_{\lambda}^6$. For a fixed $\lambda$, $S_\lambda^6$ is a surface which consists of a family of projective conics
  $$
  \lambda^2 x + a(1-\lambda)xy + (\lambda -a^2)y^2 =u^2\, ,
  $$
  parameterised by $a\in B=\CC\setminus\{ \pm \sqrt{\lambda}, \pm \frac{2\lambda}{\lambda+1}\}$.
  The actions
  of $\gamma_1$ and $\gamma_{-1}$ in $S_\lambda^6$ are again trivial. The action of $\gamma_0$ interchanges two of
  the points in the base $B$ of the fibration. So $e(B)^{inv}=q-3$, and $e(B)-
  e(B)^{inv} = -1$. As $S_\lambda^6 \to B$ is a fibration whose fibres are conics, we have
  $$R(S^6)=(q+1) \big( (q-3)T - N \big)=(q^2-2q-3)T - (q+1)N.$$

\item $S_{\lambda}^7$. {}From $S_\lambda^6$ we have to remove some points. Firstly, from each fibre (conic)
we consider the points
  $x=\pm \frac{1}{\sqrt{\lambda}}$, $y=0$; and
  $x=\pm \frac{1}{\sqrt{\lambda}}$, $y= - a(1-\lambda) x/(\lambda -a^2)$, where
  $a\in \CC\setminus \{ \pm \sqrt\lambda, \pm \frac{2\lambda}{\lambda+1}\}$.
  Geometrically, this produces two disjoint components, each of which consists of
  $2$ punctured lines (punctured at  $4$ points), intersecting at one point (corresponding to the value $a=0$).
 This object has E-polynomial $2q-9$, and there are two copies of it.

 The action of $\gamma_{\pm 1}$ is trivial, and the action of $\gamma_0$ interchanges
 the components. So $R(S^7)=(2q-9) T+ (2q-9)N$.

\item $S_{\lambda}^8$. We also remove the points of the hyperbola $\Omega^2=A^2-4$
  minus $(A,\Omega)=(\pm 2,0), (\pm \frac{1+\lambda}{\sqrt{\lambda}},\pm \frac{1-\lambda}{\sqrt{\lambda}})$
 The action of $\gamma_{\pm 1}$ is again trivial, and the action of $\gamma_0$ leaves the first two points
 fixed and interchanges the other four in two pairs.
  So  $R(S^8)=(q-5) T-2N$.
  \end{itemize}

Altogether, we get
 \begin{eqnarray*}
 R(\overline{X}_{4}) &=& R(\overline{X}{}^{0}_{4})+R(\overline{X}{}^{1}_{4})+
 R(\overline{X}{}^{2}_{4})+\\&&+R(\overline{X}{}^{3}_{4})-R(\overline{X}{}^{4}_{4})+
 R(\overline{X}{}^{5}_{4})+R(\overline{X}{}^{6}_{4})-R(\overline{X}{}^{7}_{4})-
 R(\overline{X}{}^{8}_{4}) \\ &=&  (q-1)((q^2 +q+1)T+ 3q N) \\ &=& (q^3-1)T + (3q^2-3q)N  \, .
 \end{eqnarray*}
\end{proof}

A lot of information can be extracted from Theorem \ref{thm:R(overline)}. For instance,
the E-polynomial of the fibre is obtained by
 $$
 aT+bN \mapsto a+b
 $$
(i.e., evaluating the characters $\chi \mapsto \chi(1)$). So
 $$
  e(\overline{X}{}_{4,\lambda}) = (q^3-1) + (3q^2-3q) = q^3+3q^2-3q-1 \, .
 $$

The E-polynomial of the total space is evaluated using Proposition \ref{prop:e-total-space},
 $$
  aT+bN \mapsto (q-1) a - 2(a+b) = (q-3)a-2b\, .
 $$
Using Theorem \ref{thm:R(overline)}, we get
 $$
 e(\overline{X}_4)= (q-3)(q^3-1)-2(3q^2-3q)= q^4-3q^3-6q^2+5q+3\, ,
 $$
which agrees with (\ref{eqn:e(x4-bar)}).

\section{The quotient of $\overline{X}_4$ by $\ZZ_2$} \label{sec:representation2}

Now we recall that there is an action of $\ZZ_2$ on $\overline{X}_4$, given by
$$(A,B,\lambda) \mapsto (P_0AP_0^{-1},P_0BP_0^{-1},\lambda^{-1}),$$ where
$P_0=\left(\begin{array}{cc} 0 & 1 \\ 1 & 0 \end{array}\right)$. Then there is a
diagram
  $$
\xymatrix{
\overline{X}_4 \ar[r] \ar[d] &\overline{X}_4/\ZZ_2 \ar[d]\\
\CC\setminus\{0,\pm 1\}\ar[r]& \CC\setminus\{\pm 2 \},
}
$$
where the bottom map is $\lambda \mapsto s=\lambda+\lambda^{-1}$.
The fibration $\overline{X}_4/\ZZ_2 \to \CC\setminus \{\pm 2\}$
is given by $(A,B) \mapsto \tr [A,B]=s=\lambda + \lambda^{-1}$.

\subsection{The Hodge monodromy representation of $\overline{X}_4/\ZZ_2$}
We want to calculate the Hodge monodromy representation
  $$
  R(\overline{X}_4/\ZZ_2) \in R(\Gamma')[q] \, ,
  $$
where $\Gamma'=H_1(\CC\setminus \{\pm 2\})$.

Write $B=\CC\setminus\{0,\pm 1 \}$ and $B'=\CC\setminus\{\pm 2 \}$, where
$B'=B/\ZZ_2$. Let $\eta_2,\eta_{-2}$ denote small loops in $B'$ around $2$ and $-2$,
respectively. The group $\Gamma'$ is freely generated by $\eta_2,\eta_{-2}$.
There is an exact sequence
  $$
  \Gamma \to \Gamma' \to \ZZ_2 \to 0\, ,
  $$
where $\gamma_1\mapsto 2 \eta_2$, $\gamma_{-1}\mapsto 2 \eta_{-2}$
and $\gamma_0\mapsto \eta_{-2}+\eta_2$. By Theorem \ref{thm:R(overline)}, the
action of $\gamma_{\pm 1}$ on $\overline{X}_4$ is trivial, and the
action of $2\gamma_0$ is also trivial.
This implies that the actions of $2\eta_2$ and $2\eta_{-2}$ are trivial. Hence,
the Hodge monodromy representation of $\overline{X}_4/\ZZ_2$
descends to a representation of the quotient group
  $$
  \Gamma' = H_1(B') \twoheadrightarrow \ZZ_2 \times \ZZ_2 \, .
  $$
The conclusion is that
   $$
    R(\overline{X}_4/\ZZ_2) \in R(\ZZ_2\times \ZZ_2)[q] \, .
   $$

We shall denote by $T,S_2,S_{-2}, S_0$ the four irreducible representations of $\ZZ_2\times \ZZ_2$.
The trivial representation is $T$, $S_2$ is characterised by $\rho(\eta_2)=\Id$, $S_{-2}$ is
characterised by $\rho(\eta_{-2})=\Id$, and $S_0$ is characterised by $\rho(\gamma_0)=\Id$, where
$\gamma_0=\eta_2+\eta_{-2}$.

\begin{thm}\label{thm:R(overline/Z2)}
  For $\overline{X}_4/\ZZ_2$, we have
    $$
    R(\overline{X}_4/\ZZ_2) = q^3 \, T -3q \, S_2 + 3q^2 \, S_{-2} -S_0
     \in R(\ZZ_2\times \ZZ_2)[q]\, .
    $$
\end{thm}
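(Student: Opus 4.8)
The plan is to compute $R(\overline{X}_4/\ZZ_2)$ by refining the stratification $\overline{X}_4=\bigsqcup\overline{X}_4^i$ of subsection \ref{4.5}, which is compatible with the map to $B'=\CC\setminus\{\pm2\}$, so that $R(\overline{X}_4/\ZZ_2)=\sum\pm R(\overline{X}_4^i/\ZZ_2)$ with the inclusion–exclusion signs of \eqref{eqn:e(X_4)}. The fibre over $s_0\in B'$ is a copy of the fibre $F=\overline{X}_{4,\lambda_0}$ of $\pi$, and the generators $\eta_{\pm2}$ act on $H^*_c(F)$ as the composite of the swap $\sigma$ with the \emph{half-monodromy} obtained by lifting a small loop around $s=\pm2$ to the path in $B$ joining $\lambda_0$ to $\lambda_0^{-1}$ around $\lambda=\pm1$. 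The key point is that, since $\lambda\mapsto s=\lambda+\lambda^{-1}$ is ramified to order two over $\pm2$, these half-monodromies act by analytically continuing $\sqrt\lambda$-type quantities around $\lambda=\pm1$; this is exactly what will distinguish $\eta_2$ from $\eta_{-2}$, and hence $S_2$ from $S_{-2}$.

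Writing each stratum as $\overline{X}^i_{4,\lambda}\cong\CC^*\times S^i_\lambda$, the $\CC^*$ is the stabiliser torus, on which $\sigma$ acts by inversion (see \eqref{eqn:Z2-action-here}); both half-monodromies fix the torus direction, so $\eta_2$ and $\eta_{-2}$ each act on $H^*_c(\CC^*)$ by inversion, giving $R(\CC^*)=q\,T-S_0$ by \eqref{eqcc}. Consequently, by the Künneth formula, $R(\overline{X}_4^i/\ZZ_2)=(q\,T-S_0)\otimes R(S^i/\ZZ_2)$, the product being taken in $R(\ZZ_2\times\ZZ_2)$; note that tensoring with $S_0$ interchanges $S_2\leftrightarrow S_{-2}$, which is why both appear in the final answer. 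It then remains to compute each slice contribution $R(S^i/\ZZ_2)$.

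For this I would treat the slices exactly as in the proof of Theorem \ref{thm:R(overline)}, but now recording the full $\ZZ_2\times\ZZ_2$-action rather than only that of $\gamma_0=\eta_2+\eta_{-2}$. For the zero-dimensional slices ($S^0,S^1,S^4$) and the line slices ($S^2,S^3,S^5$) this is a permutation computation: one checks whether $\sigma$ preserves or interchanges the two components of the slice. When they are interchanged the contribution is an induced representation from $\langle\gamma_0\rangle$, producing the symmetric term $(q-1)(T+S_2+S_{-2}+S_0)$ (times a multiplicity); when they are preserved, whether it is $\eta_2$ or $\eta_{-2}$ that swaps the two $\sqrt\lambda$-branches — equivalently whether $\sigma$ acts on the relevant coordinate by $c\mapsto c^{-1}$ or by $c\mapsto -c^{-1}$ (as in $\mu\mapsto-\mu^{-1}$ for $S^5$) — decides between $S_2$ and $S_{-2}$. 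For the conic-bundle slices ($S^6,S^7,S^8$) the conic $\PP^1$ contributes trivially, so the representation is governed by the monodromy on $H^1_c$ of the punctured base; there the punctures split into $\lambda$-independent ones (fixed by both $\eta_{\pm2}$) and branch points of $\sqrt\lambda$-type functions (permuted by exactly one of $\eta_{\pm2}$), and for $S^8$ the two moving pairs of punctures assemble into the regular representation $T+S_2+S_{-2}+S_0$.

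Two built-in consistency checks guide and confirm the bookkeeping: the restriction of $R(\overline{X}_4/\ZZ_2)$ along $\Gamma\to\Gamma'$ (under which $T,S_0\mapsto T$ and $S_2,S_{-2}\mapsto N$) must recover $R(\overline{X}_4)=(q^3-1)T+(3q^2-3q)N$ of Theorem \ref{thm:R(overline)}, while the induced E-polynomials of fibre and total space, obtained via $\chi\mapsto\chi(1)$ and via Proposition \ref{prop:e-total-space} with $\ell=2$, must agree with $e(\overline{X}_{4,\lambda})$ and $e(\overline{X}_4)$ from subsection \ref{4.5}. Summing the nine slice contributions then yields $R(\overline{X}_4/\ZZ_2)=q^3\,T-3q\,S_2+3q^2\,S_{-2}-S_0$. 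I expect the main difficulty to lie precisely in pinning down the $S_2$ versus $S_{-2}$ labelling for each stratum: this distinction is invisible to $R(\overline{X}_4)$, which sees only the diagonal $\gamma_0$, and it requires the delicate tracking of the square-root half-monodromies near $\lambda=\pm1$ — most intricate for the conic bundles $S^6,S^7,S^8$, where one must follow how each moving puncture is permuted by $\eta_2$ and by $\eta_{-2}$ separately.
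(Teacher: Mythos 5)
Your proposal is correct and takes essentially the same route as the paper's own proof: the same stratification $\overline{X}_4=\bigsqcup \overline{X}{}_4^i$ with the inclusion--exclusion signs of \eqref{eqn:e(X_4)}, the same lifting of $\eta_{\pm 2}$ to half-paths in the $\lambda$-plane (choosing a cut and a branch of $\sqrt{\lambda}$) composed with the $\ZZ_2$-swap, the same per-stratum tracking that distinguishes $S_2$ from $S_{-2}$, and the same consistency checks against Theorem \ref{thm:R(overline)} and the E-polynomials of fibre and total space (which the paper itself exploits, e.g.\ to pin down the unknown coefficient in stratum $8$). Your one organisational variation---packaging each stratum as $(q\,T-S_0)\otimes R(S^i)$ via K\"unneth, since both $\eta_{\pm 2}$ invert the stabiliser torus while $\gamma_0$ fixes it---is a tidier form of exactly the case-by-case invariant computations the paper carries out, and it does reproduce them (e.g.\ the stratum-$0$ contribution $q\,T+q\,S_2-S_{-2}-S_0$).
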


\begin{proof}
As in the proof of Theorem \ref{thm:R(overline)},
we shall follow the stratification described in subsection \ref{4.5}.
Note that $\ZZ_2$ acts on each of the strata $\overline{X}{}_4^i$.

We need to check the monodromy when we describe small loops around $-2$ and $2$
in the $s$-plane $\CC\setminus \{\pm 2\}$. 
The composition of the two loops is homotopic to a big loop encircling both $\pm 2$.
To understand the monodromy action, we have to lift the loops to paths in $B=\CC\setminus \{0,\pm 1\}$.
Due to the monodromy around $0$, we cannot trivialize the local system on $B$. So
we cut $B$ along the axis $L=\{-y \, i \ | \ y>0\}$, and take a determination of $\sqrt{\lambda}$ there.
In this way, we can identify all fibres over $B\setminus L$.
The image of $L$ under $h:\lambda\mapsto s=\lambda+\lambda^{-1}$ is the whole imaginary axis.
We shall fix the fibre over $b_0=1+\epsilon$, where $\epsilon>0$ is a small real number, and
identify all fibres to it.

\begin{enumerate}
 \item Going around $\eta_2$ consists (in the $\lambda$-plane) of a path from $1+\epsilon$ to $1/(1+\epsilon)$,
followed by the $\ZZ_2$-action.

\item Going around $\eta_{-2}$ consists of a path from $-1+\epsilon$ to $1/(-1+\epsilon)$ followed by
the $\ZZ_2$-action. Here we have to identify the fiber over $-1+\epsilon$ with the fiber over $b_0=1+
\epsilon$ via a path $\sigma$ in the $\lambda$-plane. Returning from  $-1+\epsilon$ to $b_0$ is
the $\ZZ_2$-transform of going from $1/(-1+\epsilon)$ to $1/(1+\epsilon)$ via the
transform of $\sigma^{-1}$. We can take $\sigma$ not to cross $L$, and then this new path does
cross $L$ once.

\item Going around $\gamma_0=\eta_2+ \eta_{-2}$ consists of doing both loops and joining them. This
crosses the vertical axis. Taking the preimage under $h$, it gives a loop that encircles $0$ and
crosses $L$ once.
\end{enumerate}

Note that the action of $\gamma_0$ is the composition of the actions of $\eta_2$ and $\eta_{-2}$. We shall
use below this fact in the form: the action of $\eta_{-2}$ is the composition of the actions of $\eta_2$
and $\gamma_0$. The action of $\gamma_0$ is basically known thanks to the computations in the proof
of Theorem \ref{thm:R(overline)}.

To abbreviate, we shall write $F^i=
\overline{X}{}_{4,\lambda}^{i} \cong S^i_\lambda \times \CC^*$ for the fiber of $\overline{X}{}_{4,\lambda}^i\too B'$.
For a loop $\alpha =\gamma_0, \eta_2, \eta_{-2}$, we shall use the notation
$e(F^i)^\alpha$ to mean the E-polynomial of $H^*_c(F^i)^\alpha$, i.e., the invariant part of the
cohomology $H^*_c(F^i)$ under the action of $\alpha$.
Also $e(F)^{inv}$ will stand for the E-polynomial of $H^*_c(F)^{\G'}$, as usual.
We have the following:

 \begin{itemize}
  \item $\overline{X}{}_4^0/\ZZ_2$. The fibre consists of $2$ copies of $\CC^*$. We already know
  that $\gamma_0$ acts by permuting the two copies of $\CC^*$. Note that the parameter
  for $\CC^*$ is $y\neq 0$, and that the $\ZZ_2$-action sends $y\mapsto z=-1/y$. The path
corresponding to $\eta_2$ does not permute the copies of $\CC^*$, and turns $\CC^*$ inside out
  (sending $0$ to $\infty$ and vice-versa). Therefore, $\eta_{-2}$ swaps the two copies of
  $\CC^*$ (and sends $0$ to $\infty$ and vice-versa).
  Thus
  \begin{eqnarray*}
    e(F^0)             &=& 2(q-1), \\
    e(F^0)^{inv}       &=& q,\\
    e(F^0)^{\eta_2}   &=& 2q, \\
    e(F^0)^{\eta_{-2}} &=& q-1, \\
    e(F^0)^{\gamma_0} &=& q-1.
  \end{eqnarray*}
  The conclusion is that $R(\overline{X}{}_4^0/\ZZ_2)=a T+ bS_2+cS_{-2} +d S_0$, where
  $a=e(F)^{inv}$, $a+b=e(F)^{\eta_2}$, $a+c=e(F)^{\eta_{-2}}$ and $a+d=e(F)^{\gamma_0}$.
  Thus
   $$
   R(\overline{X}{}_4^0/\ZZ_2)= q T+ q S_2 - S_{-2} - S_0\, .
   $$
  Note that $e(F^0)=a+b+c+d=2q-2$.

  \item $\overline{X}{}_4^1/\ZZ_2$. Exactly as above, $R(\overline{X}{}_4^1/\ZZ_2)= q T+ q S_2 - S_{-2} - S_0$.

  \item $\overline{X}{}_4^2/\ZZ_2$. By our previous analysis, we have a $\CC^*$-bundle
  (the slice fixing condition) over a collection of two sets (given by either $b\neq 0, c=0$ or $b=0, c\neq 0$),
each of which consists of
two copies of $\CC^*$ (given by the two solutions of $a^2=\lambda$).
  The involution $\gamma_0$ interchanges the two values of $a$.
  On the other hand, the $\ZZ_2$-action interchanges the two sets. So
  \begin{eqnarray*}
    e(F^2)            &=&4(q-1)^2,\\
    e(F^2)^{inv}      &=& (q-1)^2,\\
    e(F^2)^{\eta_2}  &=&  2(q-1)^2, \\
    e(F^2)^{\eta_{-2}}&=& 2(q-1)^2, \\
    e(F^2)^{\gamma_0}&=&  2(q-1)^2.
  \end{eqnarray*}
  Thus
      $$
   R(\overline{X}{}_4^2/\ZZ_2)= (q-1)^2 T+ (q-1)^2 S_2 +(q-1)^2S_{-2} +(q-1)^2 S_0\, .
   $$

  \item  $\overline{X}{}_4^3/\ZZ_2$. As before,
  $R(\overline{X}{}_4^3/\ZZ_2)= (q-1)^2 T+ (q-1)^2 S_2 +(q-1)^2S_{-2} +(q-1)^2 S_0$.

  \item $\overline{X}{}_4^4/\ZZ_2$. Now we have $8$ copies of $\CC^*$ ($2$ solutions
  to $a^2=\lambda$; a choice of sign in $x=\pm \frac1a$; and the choice of whether $b=z=0$ or $c=y=0$).
  The action of $\gamma_0$ interchanges the copies of $\CC^*$
  in pairs (corresponding to the solutions to $a^2=\lambda$). The action of $\eta_2$ (or $\eta_{-2}$) interchanges
  the strata $b=z=0$ and $c=y=0$. So finally,
  \begin{eqnarray*}
    e(F^4)            &=&8(q-1),\\
    e(F^4)^{inv}      &=& 2(q-1),\\
    e(F^4)^{\eta_2}  &=&  4(q-1), \\
    e(F^4)^{\eta_{-2}}&=& 4(q-1), \\
    e(F^4)^{\gamma_0}&=&  4(q-1).
  \end{eqnarray*}
  Thus
      $$
   R(\overline{X}{}_4^4/\ZZ_2)= 2(q-1) T+ 2(q-1) S_2 +2(q-1)S_{-2} +2(q-1) S_0\, .
   $$

  \item $\overline{X}{}_4^5/\ZZ_2$.
  Now there are two possibilites $a=\pm \frac{2\lambda}{\lambda+1}$. Consider
one of the possibilities, say  $a=\frac{2\lambda}{\lambda+1}$ (the other one is
analogous). Then the fibre $F^5$ is a $\CC^*$-bundle over $S^5_\lambda$, which consists of two lines
(given by equation (\ref{eq:x4''-conic0})),
parameterised by a square root of $\lambda$,
each with two points removed. The action of $\gamma_0$ interchanges the two lines.

To find the monodromy of $\eta_2$, we have to consider how the $\ZZ_2$-action behaves in
a small neighbourhood of $\lambda=1$. There, we have a well-defined square root
$\sqrt{\lambda}$. Hence the two lines have equations
 \begin{equation} \label{eqn:two-lines}
 \sqrt{\lambda}\, x - \sqrt{\lambda} \frac{\lambda-1}{\lambda+1} y = \pm 1\, .
 \end{equation}
 The $\ZZ_2$-action sends $\sqrt{\lambda} \mapsto \sqrt{\lambda}{}^{-1}$,
$\sqrt{\lambda}\, x  \mapsto \sqrt{\lambda} \, x$,
and
 \begin{eqnarray*}
 \sqrt{\lambda} \frac{\lambda-1}{\lambda+1} y &\mapsto &
 \sqrt{\lambda}{}^{-1} \frac{\lambda^{-1}-1}{\lambda^{-1}
 +1} \left( \lambda y + \frac{\lambda(\lambda -1)a}{a^2-\lambda} x \right) \\
& & =-\sqrt{\lambda} \frac{\lambda-1}{\lambda+1} y + 2 \sqrt{\lambda} x\, .
 \end{eqnarray*}
Therefore the two lines (\ref{eqn:two-lines}) get interchanged.

  By composition, the action of $\eta_{-2}$ preserves each line. With the notation
of subsection \ref{4.5.7}, each line may be described as $F=\CC\setminus \{\pm 1\}$,
and the $\ZZ_2$-action swaps $\pm 1$. As we have a $\CC^*$-bundle over
  $F$, we compute using $e(\CC^*)^+=q$,  $e(\CC^*)^-=-1$,
  $e(F)^+=q-1$, $e(F)^-=-1$, to obtain the Hodge-Deligne polynomial $q(q-1)+1$. Doubling all polynomials to account for the other value $a=-\frac{2\lambda}{\lambda+1}$, we summarise this as
   \begin{eqnarray*}
    e(F^5)             &=&  4(q-1)(q-2), \\
    e(F^5)^{inv}      &=&  2(q^2-q+1),\\
    e(F^5)^{\eta_2} &=&  2(q-1)(q-2),\\
    e(F^5)^{\eta_{-2}}&=&  4(q^2-q+1),\\
    e(F^5)^{\gamma_0}&=&  2(q-1)(q-2).
  \end{eqnarray*}
  Thus
      $$
   R(\overline{X}{}_4^5/\ZZ_2)= 2(q^2-q+1) T+ 2(-2q+1) S_2 + 2(q^2-q+1)S_{-2} +2(-2q+1) S_0\, .
   $$

 \item $\overline{X}{}_4^6/\ZZ_2$. We have a $\CC^*$-bundle over a family of complete
   smooth conics over the space of $a\in \CC \setminus \{\pm \sqrt{\lambda},\pm \frac{2\lambda}{\lambda+1}\}$.
  The action of $\gamma_0$ interchanges the first two of
  the points in the line parameterised by $a$. The action of $\eta_{-2}$ does the same, but
  turns $\CC^*$ inside-out. The action of $\eta_2$ fixes the line and turns $\CC^*$ inside-out.
 Multiplying all polynomials by $(q+1)$ since we are dealing with conic bundles, we obtain
 \begin{eqnarray*}
     e(F^6)            &=& (q-1)(q-4)(q+1), \\
     e(F^6)^{inv}      &=& q(q-3)(q+1),\\
     e(F^6)^{\eta_2}  &=&  q(q-4)(q+1),\\
     e(F^6)^{\eta_{-2}}&=& (q(q-3)+1)(q+1), \\
     e(F^6)^{\gamma_0}&=& (q-1)(q-3)(q+1).
  \end{eqnarray*}
  Thus
   $$
   R(\overline{X}{}_4^6/\ZZ_2)= (q+1) \big( (q^2-3q) T-q S_2+ S_{-2} -(q-3) S_0\big) .
   $$

\item $\overline{X}{}_4^7/\ZZ_2$. By our previous discussion, the fibre consists of a
 $\CC^*$-bundle over two copies of the variety consisting of the nodal curve $N$ which is
 two lines $\CC\setminus \{4 \, \text{points}\}$ joined by the origin. Note that $e(N)=2q-9$.
 The action of $\gamma_0$ interchanges the two nodal curves in pairs. Clearly,
 $\eta_{-2}$ does the same, and $\eta_2$ fixes each nodal curve. The nodal curve
 is parameterised by $y \in \{0,  a(1-\lambda) x/(a^2-\lambda)\}$, where
  $a\in \CC\setminus \{ \pm \sqrt\lambda, \pm \frac{2\lambda}{\lambda+1}\}$.
 We know that the $\ZZ_2$-action sends $y=0 \mapsto y= a(1-\lambda) x/(a^2-\lambda)$,
 so the quotient $N/\ZZ_2$ is a punctured line $\CC\setminus \{4 \, \text{points}\}$,
hence  $e(N)^+=q-4$ and $e(N)^-=q-5$. As $e(\CC^*)^+=q$, $e(\CC^*)^-=-1$, we
 get the polynomial $(q-4)q - (q-5)=q^2-5q+5$. Hence
 \begin{eqnarray*}
        e(F^7)            &=& 2(q-1)(2q-9), \\
        e(F^7)^{inv}      &=& q^2-5q+5, \\
        e(F^7)^{\eta_2}  &=& 2(q^2-5q+5), \\
        e(F^7)^{\eta_{-2}}&=& (q-1)(2q-9), \\
        e(F^7)^{\gamma_0}&=& (q-1)(2q-9).
  \end{eqnarray*}
  Thus
   $$
   R(\overline{X}{}_4^7/\ZZ_2)= (q^2-5q+5) T+  (q^2-5q+5) S_2 +(q^2-6q+4)
    S_{-2} +(q^2-6q+4) S_0\, .
   $$

\item $\overline{X}{}_4^8/\ZZ_2$. We follow the notations of subsection
 \ref{4.5.10}. The fiber $F^8$ is a $\CC^*$-bundle over the hyperbola
  $\Omega^2=A^2 -4$ minus six points: $(A,\Omega)=(\pm 2, 0)$ and the four points
  $(\pm\frac{1+\lambda}{\sqrt{\lambda}}, \pm \frac{1-\lambda}{\sqrt{\lambda}})$ (all signs allowed).

  The monodromies around $\gamma_0$, $\eta_{\pm 2}$ fix the points at infinity and the first two points.
  Let us see what happens with the remaining four points: they are parameterised by
  $\Omega^2=A^2 -4$ and $A^2 =s+2$, where $s=\lambda+\lambda^{-1}$.
  Clearly, $\gamma_0$  changes both signs simultaneously $(r_1,r_2) \mapsto (-r_1,-r_2)$,
  $(r_1,r_2)\in \{(\pm\frac{1+\lambda}{\sqrt{\lambda}}, \pm \frac{1-\lambda}{\sqrt{\lambda}})\}$.
 The map $(A,\Omega) \mapsto s=A^2-2$ ramifies at $A=0$, $s=-2$, so
$\eta_{-2}$ has the effect $(r_1,r_2) \mapsto (-r_1,r_2)$.
Similarly,
$(A,\Omega) \mapsto s=\Omega^2+2$ ramifies at $\Omega=0$, $s=2$, so
$\eta_{2}$ has the effect $(r_1,r_2) \mapsto (r_1,-r_2)$.
 Recalling that $\eta_{\pm 2}$ also turns $\CC^*$ inside-out, we obtain
  \begin{eqnarray*}
      e(F^8)            &=& (q-1)(q-7), \\
      e(F^8)^{inv}      &=& a,\\
      e(F^8)^{\eta_2}  &=& q(q-5)+2, \\
      e(F^8)^{\eta_{-2}}&=& q(q-5)+2, \\
      e(F^8)^{\gamma_0}&=& (q-1)(q-5),
  \end{eqnarray*}
 for some number $a$. Let us determine the value of $a$ :
   $$
   R(\overline{X}{}_4^8/\ZZ_2)= a T+  (q^2-5q+2-a )S_2 +(q^2-5q+2-a ) S_{-2} +(q^2-6q+5-a ) S_0\, .
   $$

Considering the E-polynomial of the fibre, we get $3q^2-16q+9-2a=q^2-8q+7$, and hence $a=q^2-4q+1$.
This gives
   $$
   R(\overline{X}{}_4^8/\ZZ_2)= (q^2-4q+1) T+  (-q+1)S_2 + (-q+1)S_{-2} +(-2q+4) S_0\, .
   $$
  \end{itemize}

\medskip

Adding up all contributions together, we get
   $$
   R(\overline{X}{}_4/\ZZ_2)= q^3 T -3q S_2 + 3q^2 S_{-2} -S_0\,.
   $$
\end{proof}

\begin{rmk}
 In the proof of Theorem \ref{thm:R(overline/Z2)}, we have chosen a base point $b_0$.
This produces an asymmetry in some of the calculations. If we chose a different
base point (or a different cut-locus for the plane $B$), some of the intermediate
calculations would change, but the final answer would be the same. The asymmetry
between the coefficients of $S_2$ and $S_{-2}$ in $R(\overline{X}_4/\ZZ_2)$ is inherent
to the geometry of the situation. 
\end{rmk}

It is clear that if
   $$
    R(\overline{X}_4/\ZZ_2) =a T+ bS_2+cS_{-2}+ d S_0
    $$
then
   $$
    R(\overline{X}_4) =(a +d)T+ (b+c)N = (q^3-1)T + (3q^2-3q)N\, ,
    $$
agreeing with Theorem \ref{thm:R(overline)}. For the fibre,
   $$
   e(\overline{X}_{4,\lambda})=a+b+c+d= q^3+3q^2-3q -1\, ,
   $$
agreeing with (\ref{eqn:x4lambda}).

We also recover the E-polynomial of the total space $e(\overline{X}_4/\ZZ_2)$
using Proposition \ref{prop:pure-type} and Proposition \ref{prop:e-total-space},
  \begin{eqnarray}\label{eqn:e(X4/Z2)}
  e(\overline{X}_4/\ZZ_2) &=& (q-1)a - (a+b+c+d) \nonumber\\
   &=&  (q-2)a-(b+c+d) \nonumber\\
   &=& (q-2) q^3 - (3q^2-3q-1) \nonumber \\
 &=& q^4-2q^3-3q^2+3q+1.
  \end{eqnarray}

\begin{rmk} \label{rmk:the-e}
More generally, suppose that we have a fibration
  $$
  \overline{X} \too B=\CC \setminus \{0,\pm1 \} ,
  $$
with a $\ZZ_2$-action compatible with the action $\lambda\mapsto \lambda^{-1}$ in
the base. Then we have a fibration
  $$
  \overline{X}/\ZZ_2 \too B'=\CC\setminus \{\pm 2\}\, .
  $$
Suppose that the Hodge monodromy representation is of the form
  $$
  R(\overline{X}/\ZZ_2)=a T + b S_2+c S_{-2} + d S_0\, ,
 $$
then we have
\begin{eqnarray*}
 e(\overline{X}) &=& (q-3)(a+d) -2(b+c), \\
 e(\overline{X})^+ &=&(q-2)a - (b+c+d), \\
 e(\overline{X})^- &=& (q-2)d- (a+b+c).
\end{eqnarray*}
\end{rmk}

\subsection{Application: Recovering the E-polynomial of $X_4$} \label{subsec:recovering}
Consider the set
 $$
 \widetilde{X}_4=\{ (A,B,\ell) \ | \ \xi=[A,B],\,  \tr(\xi)=\lambda+ \lambda^{-1} , \lambda\neq 0,\pm 1, \ell
 \text{ eigenspace of }\xi\}\,.
 $$
There are fibrations
 \begin{equation}\label{eqn:fibrations}
 \xymatrix{
 \CC^{\ast} \ar[r] \ar@{=}[d]& \PGL(2,\CC) \times \overline{X}_{4} \ar[r]\ar[d] & \widetilde{X}_{4}\ar[d]^{2:1}\\
 \CC^{\ast} \ar[r] & (\PGL(2,\CC)\times \overline{X}_{4})/\ZZ_{2} \ar[r] & X_{4}}
 \end{equation}
where the top map is
$(P,A,B,\lambda) \mapsto \big(PAP^{-1},PBP^{-1}, P \left(\begin{array}{c} 1\\
0\end{array}\right) \big)$, which is a principal bundle with fibre $D/\CC^*  \cong \CC^*$, given
by $\mu \sim \left(\begin{array}{cc} \mu & 0\\ 0& 1\end{array}\right)$.
The action by $\ZZ_2$ in the middle is given by $(P,A,B,\lambda) \mapsto
(PP_0^{-1},P_0AP_0^{-1},P_0BP_0^{-1},\lambda^{-1})$, $P_0=
\left(\begin{array}{cc} 0& 1\\ 1& 0\end{array}\right)$. On the right hand column, we have
the $2:1$-map $(A,B,\ell) \mapsto (A,B)$. Note that the $\ZZ_2$-action on the fibre is
 $$
 \mu \sim \left(\begin{array}{cc} \mu & 0\\ 0& 1\end{array}\right)  \mapsto
 \left(\begin{array}{cc} 1 & 0\\ 0& \mu\end{array}\right) \sim
 \left(\begin{array}{cc} \mu^{-1} & 0\\ 0& 1\end{array}\right) \sim \mu^{-1}\, .
 $$

{}From (\ref{eqn:fibrations}) and using Remark \ref{rem:new}, we get first that
 $$
 e(\widetilde{X}_{4})=\frac{e(\PGL(2,\CC)) e(\overline{X}_{4})}{e(\CC^{\ast})} =
 q(q+1)  e(\overline{X}_{4}) .
 $$
Now Proposition \ref{prop:fibration} and equation \eqref{eqcc} yield
 \begin{eqnarray*}
 e((\PGL(2,\CC)\times \overline{X}_{4})/\ZZ_{2})&=& e(\widetilde{X}_{4})^+
  e(\CC^{\ast})^{+}+ e(\widetilde{X}_{4})^- e(\CC^{\ast})^-\\
 &=& e(X_{4}) q - (e(\widetilde{X}_{4})-e(X_{4})) \\
 &=&  (q+1) e(X_4) - q(q+1)  e(\overline{X}_{4}).
\end{eqnarray*}
On the other hand, using Proposition \ref{prop:rem:extra},
\begin{eqnarray*}
 e((\PGL(2,\CC)\times \overline{X}_{4})/\ZZ_2) &=&e((\PGL(2,\CC)\times \overline{X}_{4})^+\\&=& e(\PGL(2,\CC))^+ e(\overline{X}_4)^+ +
 e(\PGL(2,\CC))^- e(\overline{X}_4)^- \\
  &=& e(\PGL(2,\CC)) e(\overline{X}_4/\ZZ_2) \\
  &=& q(q^2-1)e(\overline{X}_4/\ZZ_2) \, .
\end{eqnarray*}

Equating these two formulae, we get
  $$
  q(q^2-1)e(\overline{X}_4/\ZZ_2) =(q+1) e(X_4) - q(q+1)  e(\overline{X}_{4})
  $$
and hence
 $$
 e(X_4)= (q^2-q)e(\overline{X}_4/\ZZ_2) +q \, e(\overline{X}_{4}).
 $$
Now use (\ref{eqn:e(X4/Z2)}) and (\ref{eqn:e(x4-bar)})
to get
\begin{eqnarray*}
 e(X_4) &=& (q^2-q)(q^4-2q^3-3q^2+3q+1) +q(q^4-3q^3-6q^2+5q+3) \\
 &=& q^6-2q^5-4q^4+3q^2+2q\, ,
\end{eqnarray*}
agreeing with our previous formula (\ref{eqn:e(X_4)}).

\begin{rmk}\label{rmk:the-e-continuation}
  Suppose that we are in the situation of Remark \ref{rmk:the-e}, and that we have a diagram
 $$
 \xymatrix{
 \CC^{\ast} \ar[r] \ar@{=}[d]& \PGL(2,\CC) \times \overline{X}\ar[r]\ar[d] & \widetilde{X}\ar[d]^{2:1}\\
 \CC^{\ast} \ar[r] & (\PGL(2,\CC)\times \overline{X})/\ZZ_{2} \ar[r] & X}
 $$
in which the action of $\ZZ_2$ on the fiber $\CC^*$ takes $\mu\mapsto \mu^{-1}$. Then
 \begin{eqnarray}
 e(X) &=& (q^2-q)e(\overline{X})^+ \, +q \, e(\overline{X}) \notag \\
 &=& q(q-1)\big( (q-2)a -(b+c+d) \big) + q \big( (q-3) (a+d) - 2(b+c) \big) \label{eqn:main-formula} \\
 \notag &=& q(q^2-2q-1) a- q(q+1) (b+c) -2 q d \, ,
 \end{eqnarray}
where $R(\overline{X}/\ZZ_2)=aT +b S_2+cS_{-2}+d S_0$.
\end{rmk}


\section{E-polynomial of the $SL(2,\CC)$-character variety for genus $2$}\label{sec:g=2}

We consider now the map
 \begin{eqnarray*}
 F:\SL(2,\CC)^4  &\longrightarrow& \SL(2,\CC), \\
   (A,B,C,D) &\mapsto & [A,B][C,D],
 \end{eqnarray*}
where $\PGL(2,\CC)$ acts on both spaces by conjugation and $F$ is equivariant.
The character variety to consider is then
 $$
 \cM_{\Id}=Y//\PGL(2,\CC),
 $$
where
 $$
 Y=F^{-1}(\Id)=\{(A,B,C,D)\in \SL(2,\CC)^4\, |\, [A,B]=[D,C]\}.
 $$
(Here we have to take the GIT quotient because there are reducibles.)

In order to compute its E-polynomial, we stratify $Y$ as follows.
\begin{itemize}
\item $Y_0:=\{(A,B,C,D) |\,[A,B]=[D,C]=\Id\}$,
\item $Y_1:=\{(A,B,C,D) |\, [A,B]=[D,C]=-\Id\}$,
\item $Y_2:=\left\{(A,B,C,D) |\,[A,B]=[D,C]\sim\small{\left(
    \begin{array}{cc}
      1 & 1\\
      0 & 1\\
    \end{array}
  \right) } \right\}$,
  \item $Y_3:=\left\{(A,B,C,D) |\,[A,B]=[D,C]\sim\small{\left(
    \begin{array}{cc}
      -1 & 1\\
      0 & -1\\
    \end{array}
  \right)} \right\}$,
\item $Y_4:=\left\{(A,B,C,D) |\,[A,B]=[D,C]\sim\small{\left(
    \begin{array}{cc}
      \lambda & 0\\
      0 & \lambda^{-1}\\
    \end{array}
  \right)}, \ \lambda\neq 0,\pm 1\right\}$.
\end{itemize}

Therefore
 \begin{equation}\label{eqn:yyy}
 Y=\bigsqcup_{i=0}^{4} Y_{i}.
 \end{equation}

We also introduce the sets
\begin{itemize}
\item $\overline{Y}_0:=\{(A,B,C,D) |\,[A,B]=[D,C]=\Id\}=Y_0$,
\item $\overline{Y}_1:=\{(A,B,C,D) |\, [A,B]=[D,C]=-\Id\}=Y_1$,
\item $\overline{Y}_2:=\left\{(A,B,C,D) |\,[A,B]=[D,C]=\small{\left(
    \begin{array}{cc}
      1 & 1\\
      0 & 1\\
    \end{array}
  \right) } \right\}$,
  \item $\overline{Y}_3:=\left\{(A,B,C,D) |\,[A,B]=[D,C]=\small{\left(
    \begin{array}{cc}
      -1 & 1\\
      0 & -1\\
    \end{array}
  \right)} \right\}$,
\item $\overline{Y}_4:=\left\{(A,B,C,D) |\,[A,B]=[D,C]=\small{\left(
    \begin{array}{cc}
      \lambda & 0\\
      0 & \lambda^{-1}\\
    \end{array}
  \right)}, \ \lambda\neq 0,\pm 1\right\}$,
\item
 $
  \overline{Y}_{4,\lambda} :=\left\{(A,B,C,D) |\,[A,B]=[D,C]=\small{\left(
    \begin{array}{cc}
      \lambda & 0\\
      0 & \lambda^{-1}\\
    \end{array}
  \right)} \right\}
  $,
for $\lambda\neq 0,\pm 1$.
\end{itemize}

\subsection{E-polynomial of $Y$}
We use the stratification (\ref{eqn:yyy}).
\begin{itemize}
\item E-polynomial of $Y_0$. Clearly $Y_0=X_0 \times X_0$, hence
 $$
 e(Y_0 )=e(X_0)^2=(q^{4}+4q^{3}-q^{2}-4q)^{2}=
 q^8  + 8 q^7 + 14 q^6 - 16 q^5 - 31 q^4 + 8 q^3  +16 q^2\, .
 $$

\item E-polynomial of $Y_1$.
Similarly,  $Y_1 =X_1 \times X_1$, so
 $$
 e(Y_1) =e(X_1)^2=(q^3 - q) ^2= q^6- 2 q^4 + q^2 \, .
 $$

\item E-polynomial of $Y_2$.
Clearly,
 $$
 \overline{Y}_2=\overline{X}_2\times \overline{X}{}_2\, ,
 $$
so $e(\overline{Y}_2)=e(\overline{X}_2)^2$.
The stabiliser of $\xi=\left(
    \begin{array}{cc}
      1 & 1\\
      0 & 1\\
    \end{array}
  \right)$
is $U=\CC^{\ast}\times \CC$. So we have a fibration
 $$
 U \longrightarrow \GL(2,\CC)\times \overline{Y}_{2} \longrightarrow Y_{2}\, .
 $$
Hence
 \begin{eqnarray*}
 e(Y_{2}) &=& e(\GL(2,\CC)/U) e(\overline{X}_{2})^2=(q^2-1)(q^3-2q^2-3q)^2 \\
  &=& q^8 - 4 q^7 - 3 q^6 + 16 q^5+ 11 q^4 - 12 q^3  -9 q^2.
 \end{eqnarray*}

\item E-polynomial of $Y_3$.
Again as above, $\overline{Y}_3=\overline{X}_3\times \overline{X}{}_3$, and
 \begin{eqnarray*}
 e(Y_{3}) &=& e(\GL(2,\CC)/U) e(\overline{X}_{3})^2=(q^2-1)(q^3+3q^2)^2 \\ &=& q^8 + 6 q^7+ 8 q^6 - 6 q^5 -9 q^4.
 \end{eqnarray*}

\item E-polynomial of $Y_4$. It is clear that
 \begin{equation}\label{eqn:Y4}
 \overline{Y}_{4,\lambda}=\overline{X}_{4,\lambda}\times \overline{X}{}_{4,\lambda}\, .
 \end{equation}
There is a fibration
  $$
 \overline{Y}_4 \too B=\CC\setminus\{0,\pm 1\},
  $$
with a $\ZZ_2$-action that covers the action $\lambda\mapsto \lambda^{-1}$ on the base. The
Hodge monodromy representation of
  $$
 \overline{Y}_4/\ZZ_2 \too B'=\CC\setminus\{\pm 2\}
  $$
is given by
 $$
 R(\overline{Y}_4/\ZZ_2) = R(\overline{X}_4/\ZZ_2)\otimes R(\overline{X}_4/\ZZ_2).
 $$
Write $R(\overline{X}_4/\ZZ_2)=aT +b S_2+cS_{-2}+d S_0$, where
 $$
 a=q^3,\ b=-3q,  \ c=3q^2,\ d=-1.
 $$
Now, using the fact that $T\otimes S_i=S_i$, $i=0,\pm2$, $S_0\otimes S_2=S_{-2}$, $S_0\otimes S_{-2}=S_{2}$
and $S_2\otimes S_{-2}=S_{0}$, we have
\begin{eqnarray}\label{eqn:R(Y4/Z2)}
 R(\overline{Y}_4/\ZZ_2) &=& 
 (a^2+b^2 +c^2 + d^2 ) T+2(ab +cd )S_2 +2(ac+ bd) S_{-2} + 2(ad+ bc ) S_0 \notag\\
 &=& (q^6 + 9q^4 + 9q^2 + 1)T - 6(q^4 + q^2)S_2 + 6(q^5 + q)S_{-2} -20q^3 S_0\, .
\end{eqnarray}

We introduce the set
 $$
 \widetilde{Y}_4=\{(A,B,C,D, \ell) |\, [A,B]=[D,C]=\xi, \, \tr \xi= \lambda+
 \lambda^{-1}, \, \lambda\neq 0,\pm 1, \, \ell \text{ eigenspace of } \xi\}.
 $$
Then we have a diagram
 $$
 \xymatrix{
 \CC^{\ast} \ar @{=}[d]\ar[r] & \PGL(2,\CC) \times \overline{Y}_4 \ar[r]\ar[d] & \widetilde{Y}_4\ar[d]^{2:1}\\
 \CC^{\ast} \ar[r] & (\PGL(2,\CC) \times \overline{Y}_4)/ \ZZ_2 \ar[r]& Y_4
 }
$$
Using Remark \ref{rmk:the-e-continuation}, we have
 \begin{eqnarray*}
 e(Y_4) &=& q(q-1) e(\overline{Y}_4/\ZZ_2) + q\,  e(\overline{Y}_4) \\
 &=& q(q^2-2q-1) a'- q(q+1) (b'+c') -2 q d' \\
 &=& q^9 - 2q^8 + 2q^7 - 18q^6 + 6q^5 + 28q^4 - 8q^3 - 8q^2 - q,
 \end{eqnarray*}
where $R(\overline{Y}_4/\ZZ_2)=a'T +b' S_2+c'S_{-2}+d' S_0$ is given by (\ref{eqn:R(Y4/Z2)}).

\end{itemize}

Adding up all contributions above, we get
 $$
 e(Y)=q^9 + q^8 + 12q^7 + 2q^6 -3q^4 -12q^3 -q.
 $$

\subsection{The E-polynomial of $\cM_{\Id}$}
We want to calculate the E-polynomial of
 $$
 \cM_{\Id}=Y//\PGL(2,\CC)\, .
 $$
It is clear that there are no reducibles in $Y_1$ and $Y_4$. Also there are no
reducibles in $Y_3$ by Remark \ref{rmk:cMJ--red}. All elements in $Y_2$ are
reducible by Remark \ref{rmk:cMJ+-red}. Finally, $Y_0$ contains both reducibles
and irreducibles.

\subsection*{Contribution from reducibles} A reducible $(A,B,C,D)$ is
$S$-equivalent to a direct sum of two representations in $\CC^*$, given
as
  $$
   \left( \left(\begin{array}{cc} \lambda_1 & 0 \\ 0 &\lambda_1^{-1}  \end{array}\right),
   \left(\begin{array}{cc} \lambda_2 & 0 \\ 0 & \lambda_2^{-1} \end{array}\right),
   \left(\begin{array}{cc} \lambda_3 & 0 \\ 0 & \lambda_3^{-1} \end{array}\right),
   \left(\begin{array}{cc} \lambda_4 & 0 \\ 0 & \lambda_4^{-1} \end{array}\right) \right).
   $$

Therefore the space of $S$-equivalence classes of reducible representations is
${\mathcal R}=(\CC^*)^4/\ZZ_2$, where the $\ZZ_2$-action is
   $$
   (\lambda_1,\lambda_2,\lambda_3,\lambda_4)\mapsto
   (\lambda_1^{-1},\lambda_2^{-1},\lambda_3^{-1},\lambda_4^{-1}).
   $$
Using the fact that $e(\CC^*)^+=q$ and $e(\CC^*)=-1$, the above variety has E-polynomial
 \begin{eqnarray*}
  e({\mathcal R})&= & (e(\CC^*)^+)^4 + \binom{4}{2}  (e(\CC^*)^+)^2  (e(\CC^*)^-)^2+ (e(\CC^*)^-)^4 \\
  &=& q^4 + 6 q^2 + 1 \, .
 \end{eqnarray*}

\subsection*{Contribution from irreducibles in $Y_0$}
We consider the space
$${\mathcal I}:=\{\mbox{irreducible elements }
(A,B,C,D)| AB=BA, CD=DC\}/\PGL(2,\CC).$$ The irreducibility means that
there the four matrices have no common eigenvector.

Consider the trace map
 $$
  (A,B,C,D) \mapsto (t_1,t_2,t_3,t_4)=(\Tr A,\Tr B, \Tr C, \Tr D)\, .
 $$
We stratify ${\mathcal I}$ according to various cases:
\begin{itemize}
 \item ${\mathcal I}_1$ given by the conditions
 $(t_1,t_2)\neq (\pm 2,\pm 2)$ and $(t_3,t_4)\neq (\pm 2,\pm 2)$.
 This space is described with the techniques of \cite{vicente-knots}.

 The matrices $A,B$ are diagonalizable and have two well-defined common eigenvectors $e_1,e_2$
 (up to multiplication by scalars). Let $\lambda_1,\lambda_2$ be the eigenvalues of the eigenvector $e_1$
 for $A,B$, respectively. Also $C,D$ have two eigenvectors $f_1,f_2$, and let $\lambda_3,\lambda_4$ be
 the eigenvalues of $f_1$ for $C,D$, respectively.
 The four elements $[e_1],[e_2],[f_1],[f_2]\in \PP^1$ are different, so they have a well-defined cross-ratio
  $$
  r=([e_1],[e_2],[f_1],[f_2])\in \CC \setminus\{0,1\}.
  $$
There is an action of $\ZZ_2\x \ZZ_2$ given by $e_1,e_2 \mapsto e_2,e_1$ and $f_1,f_2\mapsto f_2,f_1$. This
yields
 \begin{eqnarray*}
  & &(\lambda_1,\lambda_2,\lambda_3,\lambda_4,r ) \sim
  (\lambda_1^{-1},\lambda_2^{-1},\lambda_3,\lambda_4,1-r ) \\
  && \sim (\lambda_1,\lambda_2,\lambda_3^{-1},\lambda_4^{-1},1-r ) \sim
  (\lambda_1^{-1},\lambda_2^{-1},\lambda_3^{-1},\lambda_4^{-1},r )\, .
 \end{eqnarray*}
We write $$U_1:=(\CC^*\x \CC^*)\setminus \{(\pm 1,\pm 1)\},\
U_2:=(\CC^*\x \CC^*)\setminus \{(\pm 1,\pm 1)\},\ U_3=\CC\setminus \{0, 1\}$$
and define
\begin{eqnarray*}
 \sigma_1: U_1\to U_1,  & & \sigma_1(\lambda_1,\lambda_2)=(\lambda_1^{-1},\lambda_2^{-1}), \\
 \sigma_2: U_2\to U_2, && \sigma_2(\lambda_3,\lambda_4)=(\lambda_3^{-1},\lambda_4^{-1}), \\
 \tau: U_3\to U_3, && \tau(r)=1-r\, .
\end{eqnarray*}
Then
 $$
 {\mathcal I}_1= (U_1\x U_2 \x U_3)/\langle \sigma_1\x \tau, \sigma_2\x \tau\rangle\, .
 $$

Thus $e(U_1)^+=e(U_2)^+=q^2-3$, $e(U_1)^-=e(U_2)^-= -2q$, $e(U_3)^+=q-1$ and $e(U_3)^-=-1$.
We have
\begin{eqnarray*}
 e({\mathcal I}_1) &=& e\left( (U_1\x U_2 \x U_3)^{\sigma_1\x \tau,\sigma_2\x \tau} \right) \\ &=&
  e(U_1)^+ e(U_2)^+ e(U_3)^+ + e(U_1)^- e(U_2)^- e(U_3)^- \\
  &=& (q^2-3)^2(q-1) - 4q^2 = q^5-q^4-6 q^3+2 q^2+9 q-9 \, .
\end{eqnarray*}

 \item ${\mathcal I}_2'$ corresponding to
 $(t_1,t_2)\neq (\pm 2,\pm 2)$, $(t_3,t_4)= (\pm 2,\pm 2)$ and $C=\pm \Id$, or $D=\pm \Id$.
 Note that we cannot have simultaneously $C,D=\pm \Id$, since in this case the element $(A,B,C,D)$
 is reducible.

We suppose $C=\Id$ and $D \sim J_+$ (the other cases are similar, and they account for $8$ cases).
The matrices $A,B$ define two eigenvectors $e_1,e_2$. The matrix $D$ has
 a unique eigenvector $f$, and $f \neq e_1,e_2$. Use the basis $\{f,e_2\}$. By multiplying the vectors by
 suitable scalars, we can suppose $D=J_+$. Then
 $$
  A=\left(\begin{array}{cc} \lambda_1 & 0 \\ x_1 & \lambda_1^{-1} \end{array}\right), \
  B= \left(\begin{array}{cc} \lambda_2 & 0 \\ x_2 & \lambda_2^{-1} \end{array}\right).
  $$
 Note that $(x_1,x_2)\neq (0,0)$, since $e_1 \neq f$.
The commutation relation $AB=BA$ translates to $x_1(\lambda_2-\lambda_2^{-1})=
x_2(\lambda_1-\lambda_1^{-1})$. Therefore there is one parameter $x_1\in \CC^*$ or $x_2\in \CC^*$
(depending on the case).
Note that $x_1=\tr (AD)- \tr A$, $x_2=\tr (BD)- \tr B$.

We have made a choice of basis. If we choose $\{f,e_1\}$ instead, then we get the change:
$(\lambda_1,\lambda_2,x_1) \sim   (\lambda_1^{-1},\lambda_2^{-1},x_1)$.
We deduce that
 \begin{eqnarray*}
  e({\mathcal I}_2') &=&8 (q-1) e(((\CC^* \x \CC^*)\setminus \{(\pm 1,\pm 1)\})/\ZZ_2) \\
  &=& 8(q-1)(q^2-3) = 8 q^3-8  q^2 -24 q +24 \, .
 \end{eqnarray*}

 \item ${\mathcal I}_2''$ corresponding to
 $(t_1,t_2)\neq (\pm 2,\pm 2)$, $(t_3,t_4)= (\pm 2,\pm 2)$ and $C,D$ of Jordan form.
 Suppose that $C,D\sim J_+$ (the other cases are similar).
As $C,D$ commute, they share an eigenvector $f$. Also $A,B$ have two common eigenvectors
$e_1,e_2$ (and $f\neq e_1,e_2$). Choose the basis $\{f,e_2\}$, and arrange that $C=J_+$. Then
 $$
  A=\left(\begin{array}{cc} \lambda_1 & 0 \\ x_1 & \lambda_1^{-1} \end{array}\right), \
  B= \left(\begin{array}{cc} \lambda_2 & 0 \\ x_2 & \lambda_2^{-1} \end{array}\right), \
  C= \left(\begin{array}{cc} 1 & 1 \\ 0 &1  \end{array}\right), \
  D= \left(\begin{array}{cc} 1 & x_4\\ 0 &1 \end{array}\right),
  $$
where $x_1,x_2,x_4\neq 0$. Again $AB=BA$ fixes a relation between $x_1$ and $x_2$.
Now $x_1=\tr(AC)-\tr A$ and $x_1x_4 = \tr(AD)-\tr A$, so $x_1,x_4$ are well-determined.
However, $(\lambda_1,\lambda_2,x_1,x_4) \sim   (\lambda_1^{-1},\lambda_2^{-1},x_1,x_4)$.
We get
 $$
  e({\mathcal I}_2'')=4(q-1)^2 (q^2-3) = 4 q^4-8 q^3 -8 q^2 +24 q -12\, .
  $$

 \item ${\mathcal I}_3={\mathcal I}_3'\cup {\mathcal I}_3''$ corresponding to
 $(t_1,t_2)= (\pm 2,\pm 2)$, $(t_3,t_4)\neq (\pm 2,\pm 2)$. This is similar to the
 previous two cases, ${\mathcal I}_2={\mathcal I}_2'\cup {\mathcal I}_2''$,
 $$
  e({\mathcal I}_3)=e({\mathcal I}_2)=e({\mathcal I}_2')+e({\mathcal I}_2'')=
  4 q^4-16 q^2+12\, .
  $$

\item ${\mathcal I}_4$ given by
$(t_1,t_2)=(\pm 2,\pm 2)$ and $(t_3,t_4)= (\pm 2,\pm 2)$, where one of
$A,B$ is $\pm \Id$ and the other is of Jordan form, and one of
$C,D$ is $\pm \Id$ and the other is of Jordan form (there are $64$ cases).

Suppose $A=\Id$, $B\sim J_+$, $C=\Id$, $D\sim J_+$. We take the eigenvector
$e$ of $B$ and the eigenvector $f$ of $D$. We arrange that $D=J_+$. So
 $B= \left(\begin{array}{cc} 1 & 0 \\ x_2 & 1 \end{array}\right)$, and $x_2\neq 0$.
 Thus $e({\mathcal I}_4)=64(q-1)=64q -64$.

\item ${\mathcal I}_5$ given by
$(t_1,t_2)=(\pm 2,\pm 2)$ and $(t_3,t_4)= (\pm 2,\pm 2)$, where both
$A,B$ are of Jordan form, and one of
either $C,D$ is $\pm \Id$ and the other is of Jordan form
(or both $C,D$ are of Jordan form, and one of
either $A,B$ is $\pm \Id$ and the other is of Jordan form). There are $64$ cases.

Suppose $A,B\sim J_+$, $C=\Id$, $D\sim J_+$. We can arrange that $D=J_+$ and
 $$
  A=\left(\begin{array}{cc} 1 & 0 \\ x_1 & 1 \end{array}\right), \
  B= \left(\begin{array}{cc} 1 & 0 \\ x_2 & 1 \end{array}\right),
 $$
with $x_1,x_2\neq 0$. So $e({\mathcal I}_5)=64(q-1)^2= 64q^2-128q + 64$.

\item ${\mathcal I}_6$ given by
$(t_1,t_2)=(\pm 2,\pm 2)$ and $(t_3,t_4)= (\pm 2,\pm 2)$, where all
$A,B,C,D$ are of Jordan form. We can assume $A,B,C,D\sim J_+$ (there are $16$ cases).
Working as before, we can arrange that
 $$
  A=\left(\begin{array}{cc} 1 & 0 \\ x_1 & 1 \end{array}\right), \
  B= \left(\begin{array}{cc} 1 & 0 \\ x_2 & 1 \end{array}\right), \
  C= \left(\begin{array}{cc} 1 & 1 \\ 0 &1  \end{array}\right), \
  D= \left(\begin{array}{cc} 1 & x_4\\ 0 &1 \end{array}\right),
  $$
where $x_1,x_2,x_4\neq 0$. Thus $e({\mathcal I}_6)=16(q-1)^3=16q^3-48q^2+48q-16$.
\end{itemize}

Therefore
 \begin{eqnarray*}
 e({\mathcal I})  &=& \sum_{j=1}^6 e({\mathcal I}_j) =
 q^5 + 7 q^4 + 10 q^3 -14 q^2 -7q -1\, .
 \end{eqnarray*}

\subsection*{Other contributions}
 The remaining contributions are
 $$
  e({\mathcal J}) = (e(Y_1)+e(Y_3)+e(Y_4))/e(\PGL(2,\CC)) =q^6-q^5+9q^4-10q^3+9q^2+7q+1\, .
 $$

\medskip

Adding up the previous E-polynomials, we get
 \begin{eqnarray*}
 e(\cM_{\Id}) &=& e({\mathcal R})+ e({\mathcal I}) + e({\mathcal J})
  = \, q^6+17q^4 + q^2+1\, .
 \end{eqnarray*}

\section{E-polynomial of the twisted $\SL(2,\CC)$-character variety for genus $g=2$} \label{sec:twisted}

In this section, we consider the {\em twisted\/} $\SL(2,\CC)$-character variety, i.e.,
 $$
 \cM_{-\Id}=W/\PGL(2,\CC),
 $$
where $W=F^{-1}(-\Id)$, with the definition of $F$ in Section \ref{sec:g=2}. Note that
now the GIT quotient is a geometric quotient, since there are no reducibles.

We have
 $$
 W:=\{(A,B,C,D)\in \SL(2,\CC)^{4}\, | \, [A,B]=-[D,C]\}
 $$
We stratify 
\begin{itemize}
\item $W_0:=\{(A,B,C,D)\, |\, [A,B]=-[D,C]=\xi=\Id\}$,
\item $W_1:=\{(A,B,C,D)\,|\, [A,B]=-[D,C]=\xi=-\Id\}$, 
\item $W_2:=\left\{(A,B,C,D)\,|\, [A,B]=-[D,C]=\xi\sim\small{\left(
    \begin{array}{cc}
      1 & 1\\
      0 & 1\\
    \end{array}
  \right)}\right\}$, 
  \item $W_3:=\left\{(A,B,C,D)\, |\, [A,B]=-[D,C]=\xi\sim\small{\left(
    \begin{array}{cc}
      -1 & 1\\
      0 & -1\\
    \end{array}
  \right)}\right\}$, 
\item $W_4:=\left\{(A,B,C,D) \,|\, [A,B]=-[D,C]=\xi\sim\small{\left(
    \begin{array}{cc}
      \lambda & 0\\
      0 & \lambda^{-1}\\
    \end{array}
  \right)},\, \lambda\neq 0,\pm 1\right\}$.
\end{itemize}

We also introduce the sets
\begin{itemize}
\item $\overline{W}_2:=\left\{(A,B,C,D)\,|\, [A,B]=-[D,C]=\xi=\small{\left(
    \begin{array}{cc}
      1 & 1\\
      0 & 1\\
    \end{array}
  \right)}\right\}$, 
  \item $\overline{W}_3:=\left\{(A,B,C,D) \,|\, [A,B]=-[D,C]=\xi=\small{\left(
    \begin{array}{cc}
      -1 & 1\\
      0 & -1\\
    \end{array}
  \right)}\right\}$, 
\item $\overline{W}_4:=\left\{(A,B,C,D)\, |\, [A,B]=-[D,C]=\xi=\small{\left(
    \begin{array}{cc}
      \lambda & 0\\
      0 & \lambda^{-1}\\
    \end{array}
  \right)},\, \lambda\neq 0,\pm 1\right\}$,
\item $
 \overline{W}_{4,\lambda}:=\left\{(A,B,C,D) \, |\, [A,B]=-[D,C]=\small{\left(
    \begin{array}{cc}
      \lambda & 0\\
      0 & \lambda^{-1}\\
    \end{array}
  \right)} \right\}
  $,
for $\lambda\neq 0,\pm 1$.
\end{itemize}

It is clear that $W_0=X_0\times X_1$, $W_1=X_1\times X_0$, so
 $$
 e(W_0)=e(W_1)=e(X_0)e(X_1)=  q^7 + 4 q^6 - 2 q^5 - 8 q^4 + q^3  + 4 q^2 \, .
 $$
Also $\overline{W}_2=\overline{X}_2 \times \overline{X}{}_3$ and
$\overline{W}_3=\overline{X}_3 \times \overline{X}{}_2$. So
 $$
 e(W_2)=e(W_3)=e(\GL(2,\CC)/U) e(\overline{X}_2) e(\overline{X}_3) =
 q^8 + q^7 - 10 q^6  - 10 q^5 + 9 q^4 + 9 q^3\, .
 $$

For the last stratum, note that
 \begin{equation}\label{38bis}
\overline{W}_{4,\lambda}=\overline{X}_{4,\lambda}\times
 \overline{X}{}_{4,-\lambda}\, .
 \end{equation}
Consider the involution $\tau:B\to B$, $\tau(\lambda)=-\lambda$. Then we have the
fibration $\tau^*\overline{X}_4 \to B$. Since $\tau$ commutes with the $\ZZ_2$-action, we
have a fibration
  $$
  \tau^*\overline{X}_4/\ZZ_2 \to B'=\CC\setminus \{\pm 2\}\, ,
  $$
whose fibre over $\lambda$ is $\overline{X}{}_{4,-\lambda}$.
Writing $R(\overline{X}_4/\ZZ_2)=aT +b S_2+cS_{-2}+d S_0$, where
 $$
 a=q^3,\ b=-3q, \ c=3q^2,\ d=-1\, ,
 $$
we have that
  $$
  R(\tau^*\overline{X}_4/\ZZ_2)=\tau^* R(\overline{X}_4/\ZZ_2)=aT +c S_2+bS_{-2}+d S_0\, ,
 $$
since $\tau^*T=T$, $\tau^*S_i=S_{-i}$, $i=0,\pm2$.

Now
\begin{eqnarray} \label{39bis}
 R(\overline{W}_4/\ZZ_2)&=&R(\overline{X}_4/\ZZ_2)\otimes R(\tau^*\overline{X}_4/\ZZ_2) \nonumber \\
 &=& (a^2+2 bc +d^2)T +(ac+ab+bd+cd )S_2+ ( ab+ac+bd+cd ) S_{-2} \nonumber \\
 && + (2ad+b^2+c^2 )S_0 \nonumber \\
 &=& (q^6-18q^3+1) T + (3q^5-3q^4-3q^2+3q) S_{2} \\
 && + (3q^5-3q^4-3q^2+3q) S_{-2} + (9q^4-2q^3+9q^2) S_{0} \nonumber \, .
\end{eqnarray}
Using Remark \ref{rmk:the-e-continuation}, we get
 $$
e(W_4)= q^9- 2 q^8- 7 q^7- 18 q^6 + 24 q^5+ 28 q^4 - 17 q^3 - 8 q^2-q  .
 $$

Adding up all contributions,
 $$
 e(W)= q^9- 3 q^7 - 30 q^6+ 30 q^4+ 3 q^3 -q  .
 $$
Also
 $$
 e(\cM_{-\Id})= e(W)/e(\PGL(2,\CC)) =  q^6 - 2 q^4 - 30 q^3 - 2 q^2 +1.
 $$
This agrees with the result of Mereb \cite[Theorem 4.6]{mereb:2010}.

The moduli space $\cM_{-\Id}$ is homeomorphic to the moduli space of rank $2$,
fixed determinant and odd degree Higgs bundles on a curve $C$ of genus $g=2$.
By \cite[p.\ 99]{hitchin:1987}, the Poincar\'e polynomial of this space is
 $$
 P_t(\cM_{-\Id})=2 t^6+34 t^5+2 t^4+4 t^3+t^2+1 \, .
 $$
This space is smooth and of complex dimension $6$ (it is homotopy equivalent to
the nilpotent cone, which is Lagrangian and of half the dimension). By
Poincar\'e duality, we have
 $$
 P_t^c(\cM_{-\Id})= t^{12} + t^{10} + 4 t^9 + 2 t^8 + 34 t^7 + 2 t^6\, .
 $$
The Euler-Poincar\'e characteristic of $\cM_{-\Id}$ is thus $-32$.
The variety $\cM_{-\Id}$ is smooth and of balanced type. So $h_c^{k,p,q}=0$ if $p\neq q$ or
$p+q>k$. However, this is not enough information to get the Hodge numbers of $\cM_{-\Id}$.

The moduli space of rank $2$ odd degree Higgs bundle with fixed determinant, $\cH_{-\Id}$,
over a curve of genus $2$ has E-polynomial
  $$
  e(\cH_{-\Id})=u^6v^6+u^5v^5-2u^4v^5-2u^5v^4+2u^4v^4 - 17 u^3v^4-17u^4v^3+2 u^3v^3\, ,
  $$
which is not of balanced type. This polynomial basically follows from the arguments
in \cite{hitchin:1987}.

\section{E-polynomial of $\SL(2,\CC)$-character varieties for genus $g=2$ and
diagonalizable holonomy} \label{sec:xi0}

Let $\xi_0=\left(
    \begin{array}{cc}
      \lambda_0 & 0\\
      0 & \lambda_0^{-1}\\
    \end{array}
  \right)$, where $\lambda_0\neq 0,\pm 1$, and consider
 $$
 \cM_{\lambda_0}=Z/\CC^* ,
 $$
where $Z=F^{-1}(\xi_0)$. Thus
 $$
 Z:=\{(A,B,C,D)\in \SL(2,\CC)^{4}\, | \, [A,B]=\xi_0 \, [D,C]\} \, .
 $$

Let
 $$
 \eta=[D,C]\, , \ \ \delta=[A,B]\, ,
 $$
and write $t_1:=\tr \eta$, $t_2:=\tr \delta$.
We put
 $$
 \eta=\left(\begin{array}{cc} a & b\\ c & d \end{array}\right)\, ,
 $$
 and
 $$
 \delta =  \xi_0 \eta =\left(\begin{array}{cc} \lambda_0 a & \lambda_0 b\\
 \lambda_0^{-1} c & \lambda_0^{-1}d \end{array}\right)\, .
 $$

We have the equations:
 \begin{equation}\label{eqn:t1t2}
 \left\{ \begin{array}{l} t_1=a+d \\ t_2 = \lambda_0 a + \lambda_0^{-1} d \end{array} \right.
 \qquad \Longrightarrow \qquad
 \left\{ \begin{array}{l} a={\displaystyle \frac{-\lambda_0^{-1} t_1+t_2}{\lambda_0-\lambda_0^{-1}}}\\
 d={\displaystyle \frac{\lambda_0 t_1-t_2}{\lambda_0-\lambda_0^{-1}}} \end{array} \right.
 \end{equation}

We shall pay special attention to the equation $ad=1$. This becomes
$$(-\lambda_0^{-1} t_1+t_2)(\lambda_0 t_1-t_2)=(\lambda_0-\lambda_0^{-1})^2,$$ or equivalently:
 \begin{equation} \label{eqn:t1t2-hyperbola}
 -t_1^2-t_2^2+(\lambda_0+\lambda_0^{-1})t_1t_2=(\lambda_0-\lambda_0^{-1})^2\, .
 \end{equation}
This is a hyperbola, that we shall denote by $H$ (see Figure \ref{fig:conic37}).

We stratify $Z$ according to the preimages of different sets over the plane $(t_1,t_2)$, where
we consider the map $Z\to \CC^2$, $(A,B,C,D)\mapsto (\tr [D,C],\tr [A,B])$. We distinguish the lines
$t_1=\pm 2, \, t_2=\pm 2$ and the curve $ad=1$, and the points of intersection of these curves.

\begin{figure}
  \centering
    \includegraphics[scale=0.6]{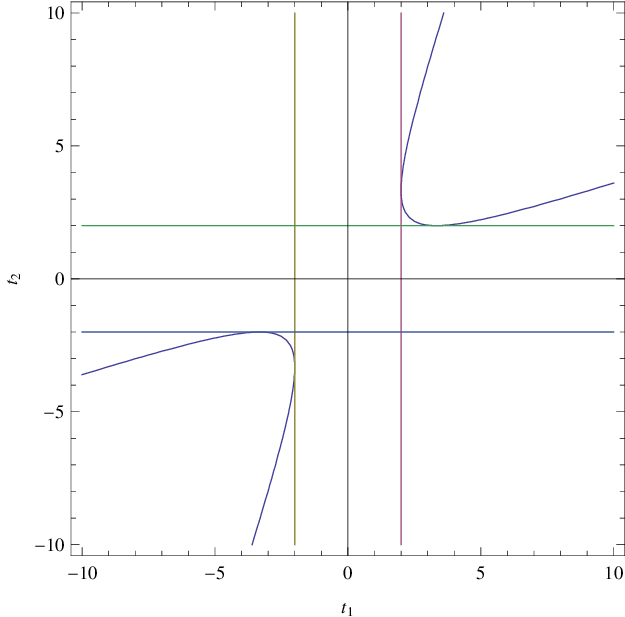}
  \caption{Distinguished lines in the base of the fibration for $Z$ when $\lambda_0=1/3$.}
  \label{fig:conic37}
\end{figure}

We consider the following sets:

\begin{itemize}
\item {$Z_1$},
corresponding to the cases $t_1=\pm 2$, $t_2=\pm 2$. First observe that (\ref{eqn:t1t2}) gives us
the values of $a,d$.
In this situation $ad\neq 1$ (if $ad=1$, then $\pm2=a+d$ would imply that $a=d=\pm 1$, and $\pm2=\lambda_0 a+\lambda_0^{-1} d$
would imply that $\lambda_0 a=\lambda_0^{-1}d=\pm 1$, giving $\lambda_0=\pm1$, which is not the case).

The condition $ad-bc=1$ then gives
$bc\neq 0$, therefore $b\in \CC^*$ and $c$ is uniquely determined. Moreover, for each choice
of $b$, we have that the matrices $\eta$ and $\delta$ are fixed, and both are of Jordan type
$\left(\begin{array}{cc}\pm 1 &1 \\
0&\pm 1\end{array}\right)$. Taking into account the four possibilities, this yields the polynomial
 \begin{eqnarray*}
 e(Z_1) &=&  (q-1) e(\overline{X}_2)^2 +(q-1) e(\overline{X}_3)^2 + 2 (q-1) e(\overline{X}_2) e(\overline{X}_3)
  \\ &=&(q-1) (e(\overline{X}_2)+e(\overline{X}_3))^2= 4 q^7 - 15 q^5 + 5 q^4 + 15 q^3 -9 q^2 \, .
 \end{eqnarray*}

\item {$Z_2$}, corresponding to the cases $t_1=2$, $t_2=\lambda_0+\lambda_0^{-1}$,
and $t_2=2$, $t_1=\lambda_0+\lambda_0^{-1}$. We shall do the first case (the second is analogous), and
then multiply the result by $2$. So consider the point $t_1=2$, $t_2=\lambda_0+\lambda_0^{-1}$,
which is the intersection of the line $t_1=2$ with $ad=1$. Thus $bc=0$. There are three possibilities, giving
rise to three sub-strata:
 \begin{enumerate}
 \item $Z^{1}_{2}$, given by $b=c=0$. Then $\eta=\Id$ and $\delta=\xi_0$. So
 $(D,C)\in X_0$, $(A,B) \in \overline{X}_{4,\lambda_0}$. Therefore $Z_2^1=X_0\times \overline{X}_{4,\lambda_0}$, and
 $$
 e(Z_2^1)= e(X_0)e(\overline{X}_{4,\lambda_0}).
 $$
 \item $Z^{2}_{2}$, given by $b\neq 0$, $c=0$.
 Then $b\in \CC^*$, and $\eta$ is of Jordan type $J_+$, and $\delta$ is diagonalizable conjugate to $\xi_0$.
 So
 $$
 e(Z_2^2)= (q-1) e(\overline{X}_2) e(\overline{X}_{4,\lambda_0}).
 $$
 \item $Z^{3}_{2}$, given by $b=0$, $c\neq 0$. It gives a similar contribution $e(Z_2^3)=e(Z_2^2)$.
 \end{enumerate}
Altogether, and doubling the contribution as we mentioned above, we obtain
 \begin{eqnarray*}
 e(Z_2) &= & 2(e(X_0)+2(q-1)e(\overline{X}_2)) e(\overline{X}_{4,\lambda}) \\ &=&  6q^7+14q^6-36q^5-8q^4+34q^3-6q^2-4q\, .
 \end{eqnarray*}

\item {$Z_3$}, corresponding to the cases $t_1=-2$, $t_2=-\lambda_0-\lambda_0^{-1}$,
and $t_2=-2$, $t_1=-\lambda_0-\lambda_0^{-1}$. Again consider the first case, $t_1=-2$, $t_2=-\lambda_0-\lambda_0^{-1}$.
The analysis is analogous to that for $Z_2$, just changing the sign of the diagonal entries of
$\eta$ to $-1$. So this produces
 \begin{eqnarray*}
 e(Z_3) & =& 2(e(X_1)+2(q-1)e(\overline{X}_3)) e(\overline{X}_{4,\lambda}) \\
 &=& 4 q^7 + 22 q^6 + 6 q^5 - 72 q^4 + 20 q^3 + 18 q^2+ 2 q\, .
 \end{eqnarray*}

\item {$Z_4$}, corresponding to the cases $t_1=2$, $t_2\neq \pm 2, \lambda_0+\lambda_0^{-1}$,
and $t_2=2$, $t_1\neq \pm 2, \lambda_0+\lambda_0^{-1}$. We focus on the first case, the second one being analogous.
So $(t_1,t_2)$ move in a (three-punctured)
line $L=\{(t_1,t_2)\, | \, t_1=2, t_2\neq \pm 2, \lambda_0+\lambda_0^{-1}\}$.

First, we cannot have $\eta=\Id$, since in that case $\delta=\xi_0$ and $t_2=\lambda_0+\lambda_0^{-1}$.
So $\eta$ is of Jordan form $J_+$. Also $ad\neq 1$, since $(t_1,t_2)\notin H$.
So the numbers $a,b,c,d$ are determined, up to a choice $b\in \CC^*$.

We fix a slice by setting $b=1$. Then
$\eta=\left(\begin{array}{cc} a & 1 \\ ad-1 & d \end{array}\right)$, where
  $$
  a=\frac{-2\lambda_0^{-1}+t_2}{\lambda_0-\lambda_0^{-1}}, \
  d=\frac{2\lambda_0 - t_2}{\lambda_0-\lambda_0^{-1}}. \,
  $$
The matrix $\eta$ has eigenvector $(1,1-a)$. Conjugating by
$\left(\begin{array}{cc} 0 & 1 \\ 1 & 1-a \end{array}\right)$, we transform
$\eta$ into $J_+$, and the set of matrices $\{(C,D) \, | \, [D,C] =\eta\}$ into $\overline{X}_2$.
This gives a trivial family over the line $L$. On the other hand,
  $$
  \delta= \left(\begin{array}{cc} \frac{-2+ \lambda_0 t_2}{\lambda_0-\lambda_0^{-1}} & \lambda_0 \\
  * & \frac{2- \lambda_0^{-1} t_2}{\lambda_0-\lambda_0^{-1}}  \end{array} \right) \, .
  $$
To understand the family of matrices $(A,B)$ with $[A,B]=\delta$ for $t_2\in L$, we need to take
a double cover given by $t_2=\mu+\mu^{-1}$, $\mu\in \CC\setminus \{0,\pm 1,\lambda_0,\lambda_0^{-1}\}$.
Then we can conjugate the matrix $\delta$ by
 $$
 P_\mu:=\left(\begin{array}{cc} \lambda_0 & \lambda_0 \\ \mu + \frac{2-\lambda_0t_2}{\lambda_0-\lambda_0^{-1}} &
\mu^{-1} + \frac{2-\lambda_0t_2}{\lambda_0-\lambda_0^{-1}} \end{array}\right)
 $$
to take it into diagonal form $\left( \begin{array}{cc} \mu & 0\\ 0 &\mu^{-1}\end{array}\right)$. This makes
the space isomorphic to $\overline{X}_4$. To recover the original space, we have to quotient by $\mu\mapsto \mu^{-1}$.
Note that $P_{\mu^{-1}}=P_\mu \left( \begin{array}{cc} 0 & 1\\ 1 &0 \end{array}\right)$, so the
corresponding action on $\overline{X}_4$ is conjugation by $\left( \begin{array}{cc} 0 & 1\\ 1 &0 \end{array}\right)$.
To summarize, the resulting space is $(\overline{X}_4/\ZZ_2) \setminus \overline{X}_{4,\lambda_0}$.
This gives the polynomial
 \begin{eqnarray*}
 e(Z_4)&=& 2(q-1) e(\overline{X}_2) (e(\overline{X}_4/\ZZ_2)- e(\overline{X}_{4,\lambda_0}))\\
  &=&  2 q^8 - 12 q^7 + 4 q^6 + 60 q^5 - 38 q^4 - 60 q^3 + 32 q^2  +12 q\, .
 \end{eqnarray*}

\item {$Z_5$}, corresponding to the cases $t_1=-2$, $t_2\neq \pm 2, -\lambda_0-\lambda_0^{-1}$,
and $t_2=-2$, $t_1\neq \pm 2, -\lambda_0-\lambda_0^{-1}$. Analogously to the previous case, we obtain
 \begin{eqnarray*}
 e(Z_5) &=& 2(q-1) e(\overline{X}_3) (e(\overline{X}_4/\ZZ_2)- e(\overline{X}_{4,-\lambda_0})) \\
 &=&  2 q^8 - 2 q^7 - 30 q^6 + 6 q^5 + 64 q^4 - 28 q^3 -12 q^2\, .
 \end{eqnarray*}

\item {$Z_6$}, corresponding to the hyperbola $H$, i.e., those $(t_1,t_2)$ 
satisfying (\ref{eqn:t1t2-hyperbola}), but $t_1,t_2\neq \pm 2$. This
set is parameterized by $\CC^*\setminus \{\pm 1,\pm \lambda_0^{-1}\}$ under
 $$
 \mu \mapsto (\mu+\mu^{-1}, \lambda_0\mu+\lambda^{-1}_0\mu^{-1}).
 $$
The elements on $H$ satisfy $ad=1$, so $bc=0$. For each value of $\mu$, there are three possibilities:
 \begin{enumerate}
 \item $Z^{1}_{6}$, for $b=c=0$;
 \item $Z^{2}_{6}$, for $b\in \CC^*$, $c=0$;
 \item $Z^{3}_{6}$, for $b=0$, $c\in \CC^*$.
 \end{enumerate}

This gives a factor $2q-1$, that
multiplies the E-polynomial of the space which is a fibration over
$B=\CC^*\setminus \{\pm 1,\pm \lambda_0^{-1}\}$, and whose fibre over $\mu$ is
$\overline{X}_{4,\mu}\times \overline{X}_{4,\lambda_0\mu}$. Call this fibration $\overline{Z}_6 \to
B$, where $\overline{Z}_6$ is of balanced type by Proposition \ref{prop:pure-type}.

Consider the map $\tau:\CC^*\to \CC^*$, $\tau(\mu)=\lambda_0\mu$. Then $\overline{Z}_6 = \overline{X}_4 \x_B
\tau^* \overline{X}_4$. Using Theorem \ref{thm:R(overline)}, this fibration has monodromy
representation:
 \begin{eqnarray*}
 R(\overline{Z}_6) &=& R(\overline{X}_4)\otimes \tau^*R(\overline{X}_4) =
 R(\overline{X}_4)\otimes R(\overline{X}_4) \\ &=& ((q^3-1)T + (3q^2-3q)N)^2\\
 &=& ((q^3-1)^2+(3q^2-3q)^2)T + 2 (q^3-1)(3q^2-3q) N \\
 &=&(q^6 + 9 q^4  - 20 q^3 + 9 q^2 +1)T+(6 q^5 - 6 q^4 - 6 q^2 +6 q ) N\, .
 \end{eqnarray*}
Now we use Proposition \ref{prop:e-total-space} to calculate the E-polynomial. Write
$R(\overline{Z}_6)=a T + b N$. Then
 \begin{eqnarray*}
 e(Z_6) & =& (2q-1) e(\overline{Z}_6) \\ & =& (2q-1) ((q-5) a -4 b) \\ &=&
 (2q-1)(q^7 -5q^6-15q^5-41q^4+109 q^3-21 q^2-23 q- 5) \\ &=&
 2 q^8 - 11 q^7- 25 q^6 - 67 q^5  + 259 q^4 - 151 q^3 - 25 q^2 + 13 q +5\, .
 \end{eqnarray*}

\item $Z_7$, corresponding to the open part of the plane $(t_1,t_2)$, that is,
$(t_1,t_2) \notin H$ and $t_1,t_2\neq \pm 2$.
The equations (\ref{eqn:t1t2}) determine $a,b,c,d$ up to $b\in \CC^*$. This gives a space whose fibre
over $(t_1,t_2)$ is isomorphic to
 \begin{equation}\label{eqn:lll}
  \overline{X}_{4,\mu_1}\times \overline{X}_{4,\mu_2},
 \end{equation}
where $t_i=\mu_i+\mu_i^{-1}$, $i=1,2$. Ignoring the condition $ad=1$, this contributes
$e(\overline{X}_4/\ZZ_2)^2$.
The term to be subtracted is the E-polynomial of the fibration with fibres
(\ref{eqn:lll}) over $\{(t_1,t_2)\in H \ | \ t_1\ne\pm 2,t_2\ne \pm 2\}$.
This is the same as $e(\overline{Z}_6)$ above. So
 \begin{eqnarray*}
 e(Z_7)&=& (q-1)\big( e(\overline{X}_4/\ZZ_2)^2 - e(\overline{Z}_6) \big)\\
 &=&q^9 - 6 q^8 + 8 q^7 + 30 q^6 + 7 q^5 - 171 q^4 + 155 q^3 - 5 q^2 - 23 q -6\, .
 \end{eqnarray*}

\end{itemize}

Adding up all contributions, we get
 \begin{eqnarray*}
e(Z) 
&=& q^9 - 3 q^7 + 15 q^6 - 39 q^5 + 39 q^4 - 15 q^3 + 3 q^2 -1\, .
 \end{eqnarray*}

Dividing by the stabilizer, $\stab(\xi)=\CC^* \subset \PGL(2,\CC)$, we get
 $$
 e(\cM_{\lambda_{0}})=q^8  + q^7 - 2 q^6 + 13 q^5 - 26 q^4 + 13 q^3 - 2 q^2 + q + 1 \, .
 $$

Note that this polynomial is palindromic, as suggested to us by T.\ Hausel.

The space $\cM_{\lambda_0}$ is homeomorphic to the moduli space of parabolic Higgs bundles
$\cH_{\lambda_0}$. By \cite{by}, the Poincar\'e polynomial is
 \begin{eqnarray*}
 P_t(\cM_{\lambda_0}) &=& \frac{(1+t^3)^4+ t^7(1+t)^4(3t-4)}{(1-t^2)^2} + 15t^6(1+t)^2 \\
  &=& 19t^8 + 38 t^7 + 25 t^6 + 8 t^5+ 3 t^4  +4t^3+   2t^2+1\, .
 \end{eqnarray*}
As $\cM_{\lambda_0}$ is smooth and of complex dimension $8$, Poincar\'e duality gives
 $$
 P_t^c(\cM_{\lambda_0})= t^{16} + 2t^{14} +4 t^{13} +3 t^{12}+ 8 t^{11}+25t^{10}+ 38t^9+19 t^8
 \, . 
 $$
Note that the Euler-Poincar\'e characteristic of $\cM_{\lambda_0}$ is $0$.

%
%
%
%

\section{E-polynomial of $\SL(2,\CC)$-character varieties for genus $g=2$ and Jordan form $J_+$} \label{sec:J+}

Let $J_+=\left(
    \begin{array}{cc}
      1 & 1\\
      0 & 1 \\
    \end{array}
  \right)$. Recall that $U\subset \GL(2,\CC)$ is the stabilizer of $J_+$, so that
 $$
 \cM_{J_+}=Z/ (U/\CC^*) \,  ,
 $$
where 
 $$
 Z:=F^{-1}(J_+)= \{(A,B,C,D)\in \SL(2,\CC)^{4}\ | \ [A,B]=J_+ \, [D,C]\} \, .
 $$
(The notation $Z$ should not be confused with the one used in section \ref{sec:xi0}.)

Let
 $$
 \eta=[D,C]=\left(\begin{array}{cc} a & b\\ c & d \end{array}\right)\, ,
 $$
 and
 $$
 \delta =[A,B]=  J_+ \eta =\left(\begin{array}{cc} a+c & b+d\\
 c & d \end{array}\right)\, .
 $$

Set $t_1=\tr \eta=a+d$, and $t_2=\tr \delta=a+c+d$, so $c=t_2-t_1$.

\begin{figure}
  \centering
    \includegraphics[scale=0.5]{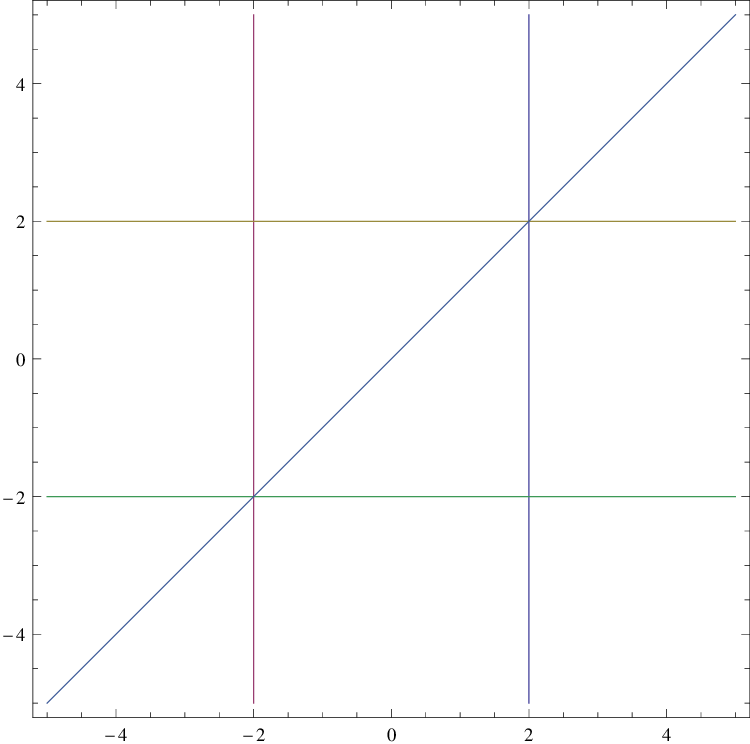}
  \caption{Distinguished lines in the base of the fibration for $Z$.}
  \label{fig:lines}
\end{figure}

We stratify $Z$ into the following sets corresponding to the various possibilities
for $(t_1,t_2)$ (see Figure \ref{fig:lines}).

\begin{itemize}
 \item {$Z_1$} is the subset of $Z$ lying over the point $(t_1,t_2)=(2,2)$. Observe that in this case
$c=0$. So  $a=d=1$.
There is still a free parameter $b\in \CC$.
For $b\neq 0,-1$, we have that both $\eta,\delta$ are of Jordan form $J_+$. This contributes a summand
to the  E-polynomial equal to $(q-2) e(\overline{X}_2)^2$. For $b= 0,-1$, one of $\eta,\delta$
is of Jordan form and the other
is diagonal (i.e., equal to $\Id$), thus contributing $2 e(\overline{X}_2) e(\overline{X}_0)$. All together,
we have
 \begin{eqnarray*}
 e(Z_1) &=& (q-2) e(\overline{X}_2)^2+ 2 e(\overline{X}_2) e(\overline{X}_0) \\
  &=& 
  3q^7-2q^6-18q^5-12q^4+7q^3+6q^2 \, .
 \end{eqnarray*}

 \item {$Z_2$}, the stratum corresponding to $t_1=t_2= -2$. This is completely analogous to the previous case,
giving the E-polynomial
 \begin{eqnarray*}
 e(Z_2) &=& (q-2) e(\overline{X}_3)^2+ 2 e(\overline{X}_3) e(\overline{X}_1) \\
  &=& 
  q^7+6q^6+3q^5-20q^4-6q^3 \, .
 \end{eqnarray*}

 \item {$Z_3$}, corresponding to $t_1=-t_2= \pm 2$. These are two points, so we
consider the first case $(t_1,t_2)=(2,-2)$, and double the final contribution.
Recall that $c=t_2-t_1$, so $c\neq 0$. Then conjugating by a matrix $\left(
    \begin{array}{cc}
      1 & \lambda \\
      0 & 1 \\
    \end{array}
  \right)$,
with $\lambda=d/c$, we can arrange that $d=0$. Therefore $b=-1/c$,
and $a=t_1$. This fixes $a,b,c,d$. Note now that $\eta$ is of Jordan type $J_+$ and
$\delta$ is of Jordan type $J_-$. So
 $$
  e(Z_3)=2q \, e(\overline{X}_2) e(\overline{X}_3) = 
 2q^7+2q^6-18q^5-18q^4 \, .
  $$
Here the factor $q$ is given by the parameter $\lambda \in \CC$.

 \item {$Z_4$}, corresponding to four strata given by the cases $t_1=2$, $t_2\neq \pm 2$;
$t_1=-2$, $t_2\neq \pm 2$; $t_2=2$, $t_1\neq \pm 2$; and $t_2=-2$, $t_1\neq \pm 2$. Let us
focus on the line $L=\{(t_1,t_2)\ | \ t_1=2, t_2\neq \pm 2\}$.
Then $c=t_2-2 \neq 0$, so $\eta$ is of Jordan form $J_+$.
Conjugating as before, we can assume $d=0$, $b=-1/c$ and $a=t_1=2$.
Therefore
  $$
   \eta= \left( \begin{array}{cc} 2 & \frac1{2-t_2} \\ t_2-2 & 0 \end{array} \right), \
   \delta= \left( \begin{array}{cc} t_2 & \frac1{2-t_2} \\ t_2-2 & 0 \end{array} \right).
  $$

Now, conjugating by $\left( \begin{array}{cc} 1 & 1\\ t_2-2 & 0 \end{array} \right)$, we put
$\eta$ into Jordan form $J_+$. Hence the fibration given by the matrices $(C,D)$ over $L$
has fibre $\overline{X}_2$ and is trivial.

To study the fibration given by the matrices $(A,B)$ over $L$, we introduce the
variable $t_2=\mu+\mu^{-1}$, $\mu \in \CC\setminus \{0,\pm 1\}$. This gives a double
cover of the space we are interested in. The fibre over $\mu$ is $\overline{X}_{4,\mu}$,
and consists of pairs of matrices $(A,B)$ satisfying
 $$
 [A,B]=\delta = \left( \begin{array}{cc} \mu+\mu^{-1} & \frac1{2-\mu-\mu^{-1}} \\ \mu+\mu^{-1}-2
 & 0 \end{array} \right).
 $$
We conjugate by the matrix $P_\mu:=\left( \begin{array}{cc} \mu+\mu^{-1} -2& \mu+\mu^{-1} -2 \\
-\mu^{-1} & -\mu \end{array} \right)$, to put $\delta$ into standard form
$\left( \begin{array}{cc} \mu& 0 \\ 0 & \mu^{-1} \end{array} \right)$.
The quotient by $\mu\mapsto \mu^{-1}$ corresponds to conjugation by $\left( \begin{array}{cc} 0 & 1 \\
1&0 \end{array} \right)$, i.e., to the standard $\ZZ_2$-action on $\overline{X}_4$.

So, the substratum of $Z_4$ that we are studying is isomorphic to
$\CC \x \overline{X}_2 \x (\overline{X}_4/\ZZ_2)$. Analogously, the substratum corresponding to
$t_1=-2$, $t_2\neq \pm 2$ is isomorphic $\CC \x \overline{X}_3 \x (\overline{X}_4/\ZZ_2)$.
The remaining two substrata are copies of the previous two. Finally, we get
 \begin{eqnarray*}
 e(Z_4)&=&2q\,
 \left( e(\overline{X}_2)+e(\overline{X}_3) \right) e(\overline{X}_{4}/\ZZ_2) \\
 &=&4q^8-6q^7-22q^6+18q^5+28q^4-16q^3-6q^2\, .
 \end{eqnarray*}

 \item {$Z_{5}$}, corresponding to $t_1=t_2\neq \pm 2$. First, we have
$c=t_2-t_1=0$, so $d=a^{-1}$ and $b\in\CC$. Thus 
 $$
  \eta=\left(\begin{array}{cc}
      a & b\\
      0 & a^{-1}
    \end{array}\right), \ \delta=\left(\begin{array}{cc}
      a & b+a^{-1}\\
      0 & a^{-1}
    \end{array}\right),
    $$
where $t_1=t_2=a+a^{-1}$. The fibre over $(t_1,t_1)$ consists of two disjoint
sets, one is isomorphic to $\overline{X}_{4,a}\x \overline{X}_{4,a}$, the
other isomorphic to $\overline{X}_{4,a^{-1}}\x \overline{X}_{4,a^{-1}}$. Taking the
double cover $a \mapsto t_1=a+a^{-1}$, we get a fibration over the line
$B=\CC\setminus \{0,\pm 1\}$ whose fibres are $\overline{X}_{4,a}\x \overline{X}_{4,a}$.
We call $\overline{Z}_5 \to B$ this fibration, and note that by Proposition \ref{prop:pure-type} $\overline{Z}_5$ is of balanced type. Its monodromy representation is
 \begin{eqnarray*}
  R(\overline{Z}_5) &=& R(\overline{X}_4)\otimes R(\overline{X}_4) \\
  &=& (aT+bN) \otimes (aT+bN) = (a^2+b^2) T + 2ab N \\
  &=& (q^6+9 q^4 -20q^3 + 9 q^2 +1) T + (6q^5 - 6 q^4 -6q^2 + 6q ) N\, ,
 \end{eqnarray*}
where $R(\overline{X}_4)=aT+bN$, $a= q^3-1$, $b=3q^2-3q$ (by Theorem \ref{thm:R(overline)}).
Therefore, Proposition \ref{prop:e-total-space} says that the E-polynomial is
 \begin{eqnarray*}
 e(\overline{Z}_5) &=& (q-3)(q^6+9 q^4 -20q^3 + 9 q^2 +1) - 2(6q^5 - 6 q^4 -6q^2 + 6q ) \\
 &=&  q^7-3q^6-3q^5-35q^4+69 q^3-15 q^2-11q -3 \, .
 \end{eqnarray*}
and $e(Z_5)=q \, e(\overline{Z}_5) = q^8-3q^7-3q^6-35q^5+69 q^4-15 q^3-11q^2 -3q$.

 \item {$Z_6$}, corresponding to the open stratum, characterised by the equations
$t_1,t_2\neq \pm 2$, and $t_1\neq t_2$.
Then $c=t_2-t_1 \neq 0$. Conjugating by a matrix $\left(
    \begin{array}{cc}
      1 & \lambda \\
      0 & 1 \\
    \end{array}
  \right)$,
with $\lambda=d/c$, we can arrange that $d=0$. Therefore $b=-1/c$,
and $a=t_1$. So the fibre over
$(t_1,t_2)$ has
  $$
   \eta=\left(\begin{array}{cc}
      t_1 & -1/(t_2-t_1)\\
      t_2-t_1 & 0
    \end{array}\right), \ \delta=\left(\begin{array}{cc}
      t_2 & -1/(t_2-t_1)\\
      t_2-t_1 & 0
    \end{array}\right).
    $$

Each fibre is isomorphic to $\CC\times \overline{X}_{4,\lambda_1}\times \overline{X}_{4,\lambda_2}$,
where $t_i=\lambda_i+\lambda_i^{-1}$. Working as before, we see that, ignoring the condition
$t_1\neq t_2$, the total
space is isomorphic to $\CC\times (\overline{X}_{4}/\ZZ_2) \times (\overline{X}_{4}/\ZZ_2)$.
We have to subtract the contribution corresponding to the space $Z_6'$ parameterized by
$(t_1,t_1)$, $t_1\neq \pm2$, and with fibers $\CC\times \overline{X}_{4,\lambda_1}\times
\overline{X}_{4,\lambda_1}$. So $Z_6'= \CC \x (\overline{Y}_4/\ZZ_2)$, where $\overline{Y}_4$
is described in section \ref{sec:g=2} (see equation (\ref{eqn:Y4})). The
Hodge monodromy representation $R(\overline{Y}_4/\ZZ_2)$ is given by equation (\ref{eqn:R(Y4/Z2)}). Note that by Proposition \ref{prop:pure-type} $\overline{Y}_4/\ZZ_2$ is of balanced type. Hence using Proposition \ref{prop:e-total-space}, we have that, 
 $$
 e(\overline{Y}_4/\ZZ_2)=q^7-2 q^6+3 q^5-12 q^4+29 q^3-12 q^2-5 q-2\, ,
 $$
and then
\begin{eqnarray*}
 e(Z_6) &=& q(e(\overline{X}_4/\ZZ_2)^2- e(\overline{Y}_4/\ZZ_2)) \\
 &=&q^9-5 q^8+15 q^6+11 q^5-51 q^4+15 q^3+11 q^2+3 q \, .
\end{eqnarray*}
\end{itemize}

Adding all contributions, we get
 $$
 e(Z)= q^9  -3 q^7 -4 q^6 - 39 q^5-4 q^4 -15 q^3 \, ,
 $$
and dividing by the stabilizer, $\stab (J_+)=U/\CC^* \subset \PGL(2,\CC)$,
 $$
 e(\cM_{J_+}) = q^8  -3 q^6 -4 q^5 - 39 q^4-4 q^3 -15 q^2 \, .
 $$

\begin{rmk}
 Note that $U/\CC^*$ acts freely on $Z$. Certainly, if $(A,B,C,D)\in \SL(2,\CC)^4$ is acted on
trivially by $U$, then all matrices $A,B,C,D \in U$, and hence $[A,B]=[C,D]=\Id$.
\end{rmk}

\section{E-polynomial of $\SL(2,\CC)$-character varieties for genus $g=2$ and Jordan form $J_-$} \label{sec:J-}

Let $J_-=\left(
    \begin{array}{cc}
      -1 & 1\\
      0 & -1 \\
    \end{array}
  \right)$, and consider
 $$
 \cM_{J_-}=Z/ (U/\CC^*) ,
 $$
where
 $$
 Z:=F^{-1}(J_-)= \{(A,B,C,D)\in \SL(2,\CC)^{4}\ | \ [A,B]=J_- \, [D,C]\} \, .
 $$
(The notation $Z$ should not be confused with the ones used in section \ref{sec:xi0} or
section \ref{sec:J+}.)

Let
 $$
 \eta=[D,C]=\left(\begin{array}{cc} a & b\\ c & d \end{array}\right)\, ,
 $$
and
 $$
 \delta =[A,B]=  J_- \, \eta =\left(\begin{array}{cc} -a+c & -b+d\\
 -c & -d \end{array}\right)\, .
 $$
Then $t_1=\tr \eta=a+d$ and $t_2=\tr \delta=-a+c-d$, so $c=t_2+t_1$.

\begin{figure}
  \centering
    \includegraphics[scale=0.5]{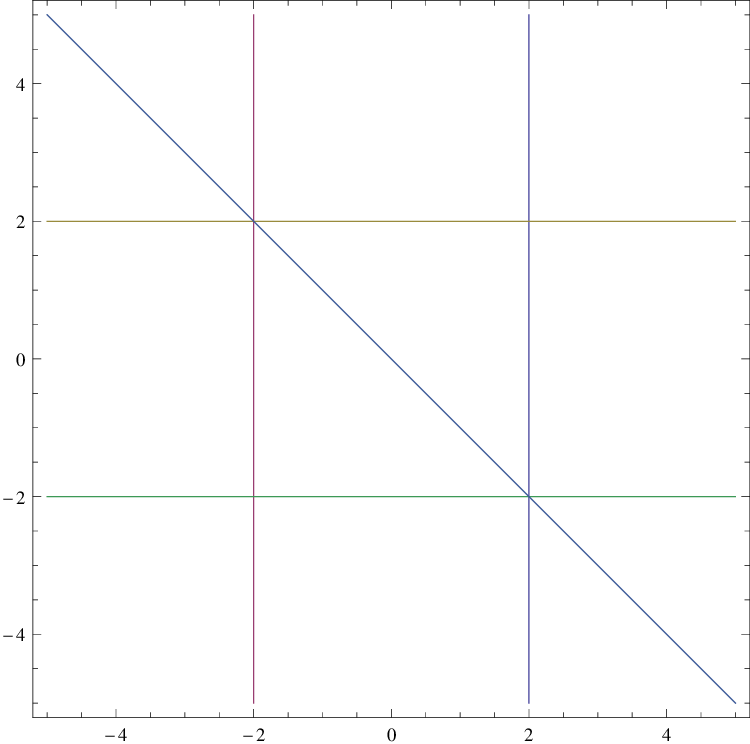}
  \caption{Distinguished lines in the base of the fibration for $Z$.}
  \label{fig:rectasm}
\end{figure}

We stratify $Z$ in the following sets corresponding to the various possibilities
for $(t_1,t_2)$ (see Figure \ref{fig:rectasm}).

\begin{itemize}
 \item {$Z_1$} is the stratum corresponding to $(t_1,t_2)=(2,-2)$.
Then $c=t_1+t_2=0$. So $a=d=1$.
There is a parameter $b\in \CC$. For $b\neq 0, 1$, we have that both $\eta,\delta$ are of Jordan form
(of types $J_+$ and $J_-$, resp.). So this
contributes $(q-2) e(\overline{X}_2)e(\overline{X}_3)$. For $b= 0,1$, one of $\eta,\delta$ is of Jordan form
and the other is diagonal, contributing $e(\overline{X}_2)
e(\overline{X}_1)+e(\overline{X}_3) e(\overline{X}_0)$. Altogether, we have
 \begin{eqnarray*}
 e(Z_1)&=& (q-2)  e(\overline{X}_2)e(\overline{X}_3)+ e(\overline{X}_2) e(\overline{X}_1)+e(\overline{X}_3)
  e(\overline{X}_0) \\
 &=&  2 q^7 + 7 q^6 - 2 q^5- 2 q^4 + 8 q^3 +3 q^2 \, .
 \end{eqnarray*}

\item {$Z_2$}, corresponding to $(t_1,t_2)=(-2,2)$. It is analogous to the first stratum.
We get
 \begin{eqnarray*}
 e(Z_2)&=& (q-2)  e(\overline{X}_2)e(\overline{X}_3)+ e(\overline{X}_2) e(\overline{X}_1)+
 e(\overline{X}_3) e(\overline{X}_0) \\
 &=& 2 q^7 + 7 q^6 - 2 q^5- 2 q^4 + 8 q^3 +3 q^2 \, .
  \end{eqnarray*}

\item {$Z_3$}, corresponding to $t_1=t_2= \pm 2$. Consider first the case $(t_1,t_2)=(2,2)$.
We have that $c=t_1+t_2=4$. Conjugating by the matrix $\left(
    \begin{array}{cc}
      1 & \lambda \\
      0 & 1 \\
    \end{array}
  \right)$,
with $\lambda=d/c$, we can arrange that $d=0$. Therefore $b=-1/c$,
and $a=t_1$. The matrices $\eta,\delta$ are not diagonal, so they are both of Jordan type
$J_+$. Therefore the contribution is $q\, e(\overline{X}_2)^2$. Analogously, the
case $(t_1,t_2)=(-2,-2)$ contributes $q\, e(\overline{X}_3)^2$. Hence
 \begin{eqnarray*}
 e(Z_3)&=&q ( e(\overline{X}_2)^2 + e(\overline{X}_3)^2) \\ 
 &=& 2q^7+2q^6+7q^5+12q^4+9q^3 \, .
 \end{eqnarray*}

\item {$Z_4$}, corresponding to four strata given by the cases $t_1=2$, $t_2\neq \pm 2$;
$t_1=-2$, $t_2\neq \pm 2$; $t_2=2$, $t_1\neq \pm 2$; and $t_2=-2$, $t_1\neq \pm 2$. Let us
focus on the line $L=\{(t_1,t_2)\ | \ t_1=2, t_2\neq \pm 2\}$.

Then $c=t_2+2 \neq 0$, so $\eta$ is of Jordan form $J_+$.
Conjugating as before, we can assume $d=0$, $b=-1/c$ and $a=t_1=2$.
Therefore
  $$
   \eta= \left( \begin{array}{cc} 2 & -\frac1{t_2+2} \\ t_2+2 & 0 \end{array} \right), \
   \delta= \left( \begin{array}{cc} t_2 & \frac1{2+t_2} \\ -t_2-2 & 0 \end{array} \right).
  $$
Working as in the case of the stratum $Z_4$ of section \ref{sec:J+}, we can put the matrix
$\eta$ into Jordan form $J_+$ for all $t_2$ simultaneously, so that the family
parametrizing $(C,D)$ is a trivial family with fiber $\overline{X}_2$ over $L$.
Also, we can make the change of variable
$t_2=\mu+\mu^{-1}$, $\mu \in \CC\setminus \{0,\pm 1\}$, to put $\delta$ into diagonal
form $\left( \begin{array}{cc} \mu& 0 \\ 0 & \mu^{-1} \end{array} \right)$, so that
we see that the family parametrizing $(A,B)$ over $L$ is isomorphic to
$\overline{X}_4/\ZZ_2 \to \CC\setminus \{\pm 2\}$.
So, the substratum of $Z_4$ that we are dealing with is isomorphic to
$\CC \x \overline{X}_2 \x (\overline{X}_4/\ZZ_2)$. The substratum corresponding to
$t_1=-2$, $t_2\neq \pm 2$ is isomorphic $\CC \x \overline{X}_3 \x (\overline{X}_4/\ZZ_2)$.
The remaining two substrata are copies of the previous two. Finally, we get
 \begin{eqnarray*}
 e(Z_4)&=&2q\,
 (e(\overline{X}_2)+e(\overline{X}_3)) e(\overline{X}_{4}/\ZZ_2) \\
 &=&4q^8-6q^7-22q^6+18q^5+28q^4-16q^3-6q^2\, .
 \end{eqnarray*}

\item {$Z_{5}$}, corresponding to $t_1=-t_2\neq \pm 2$. First, we have
$c=t_2+t_1=0$, so $d=a^{-1}$ and $b\in\CC$. So 
 $$
  \eta=\left(\begin{array}{cc}
      a & b\\
      0 & a^{-1}
    \end{array}\right), \ \delta=\left(\begin{array}{cc}
      -a & -b+a^{-1} \\
      0 & -a^{-1}
    \end{array}\right),
    $$
where $t_1=-t_2=a+a^{-1}$. The fibre over $(t_1,-t_1)$ consists of two disjoint
sets, one is isomorphic to $\overline{X}_{4,a}\x \overline{X}_{4,-a}$, the
other isomorphic to $\overline{X}_{4,a^{-1}}\x \overline{X}_{4,-a^{-1}}$. Taking the
double cover $a \mapsto t_1=a+a^{-1}$, we get a fibration over the plane
$B=\CC\setminus \{0,\pm 1\}$ whose fibers are $\overline{X}_{4,a}\x \overline{X}_{4,-a}$.
We call this fibration $\overline{Z}_5 \to B$. Consider $\tau:B\to B$, $\tau(a)=-a$.
The monodromy representation of $\overline{Z}_5$ is
 \begin{eqnarray*}
  R(\overline{Z}_5) &=& R(\overline{X}_4)\otimes \tau^*R(\overline{X}_4)
  = R(\overline{X}_4)\otimes R(\overline{X}_4) \\
  &=& (aT+bN) \otimes (aT+bN) = (a^2+b^2) T + 2ab N \\
  &=& (q^6+9 q^4 -20q^3 + 9 q^2 +1) T + (6q^5 - 6 q^4 -6q^2 + 6q ) N\, ,
 \end{eqnarray*}
where $R(\overline{X}_4)=aT+bN$, $a= q^3-1$, $b=3q^2-3q$ (by Theorem \ref{thm:R(overline)}).
Therefore, Proposition \ref{prop:e-total-space} says that the E-polynomial is
 \begin{eqnarray*}
 e(Z_5) &=& q\,  e(\overline{Z}_5) \\ 
 &=&  q^8-3q^7-3q^6-35q^5+69 q^4-15 q^3-11q^2 -3 q \, .
 \end{eqnarray*}

\item {$Z_6$}, corresponding to the open stratum, characterised by the equations
$t_1,t_2\neq \pm 2$, and $t_1+ t_2\neq 0$.
Then $c \neq 0$. Conjugating by a matrix $\left(
    \begin{array}{cc}
      1 & \lambda \\
      0 & 1 \\
    \end{array}
  \right)$,
with $\lambda=d/c$, we can arrange that $d=0$. Therefore $b=-1/c$,
and $a=t_1$. So the fibre over
$(t_1,t_2)$ has
  $$
   \eta=\left(\begin{array}{cc}
      t_1 & -1/(t_1+t_2)\\
      t_1+t_2 & 0
    \end{array}\right), \ \delta=\left(\begin{array}{cc}
      t_2 &  1/(t_1+t_2)\\
      -t_1-t_2 & 0
    \end{array}\right).
    $$
So each fibre is isomorphic to $\CC\times \overline{X}_{4,\lambda_1}\times \overline{X}_{4,\lambda_2}$,
where $t_i=\lambda_i+\lambda_i^{-1}$, $i=1,2$.
Working as before, we see that, ignoring the condition
$t_1 + t_2\neq 0$, the total
space is isomorphic to $\CC\times (\overline{X}_{4}/\ZZ_2) \times (\overline{X}_{4}/\ZZ_2)$,
so contributing $q\, e(\overline{X}_4/\ZZ_2)^2$. 

We have to subtract the contribution corresponding to the space $Z_6'$ parametrized by
$(t_1,-t_1)$, $t_1\neq \pm2$, and with fibres $\CC\times \overline{X}_{4,\lambda_1}\times
\overline{X}_{4,-\lambda_1}$. Therefore $Z_6'= \CC\times (\overline{W}_4/\ZZ_2)$, where
$\overline{W}_4$ is the space of Section \ref{sec:twisted} (see equation (\ref{38bis})).
Therefore the Hodge monodromy representation $R(\overline{W}_4/\ZZ_2)$ is given by
(\ref{39bis}), and
 \begin{eqnarray*}
  e(\overline{W}_4/\ZZ_2) &=& (q-2)(q^6-18q^3+1) - (6q^5 +3 q^4-2q^3+3q^2+6q) \\
   &=& q^7-2q^6-6q^5-21q^4+38q^3-3q^2-5q-2\, .
 \end{eqnarray*}
 So
 \begin{eqnarray*}
 e(Z_6) &=& q \,( e(\overline{X}_4/\ZZ_2)^2- e(\overline{W}_4/\ZZ_2)) \\
 &=&  q^9 - 5 q^8 + 24 q^6+20q^5-60 q^4+6 q^3+11 q^2+3q\, .
 \end{eqnarray*}
\end{itemize}

Adding up all contributions,
 $$
  e(Z)= q^9-3 q^7+15 q^6+6 q^5+45 q^4 \, . 
 $$
Dividing by the stabilizer, $\stab (J_-)=U/\CC^* \subset \PGL(2,\CC)$,
 $$
 e(\cM_{J_-}) = q^8-3 q^6+15 q^5+6 q^4+45 q^3 \, .  
 $$

\begin{rmk}
 As in Remark \ref{rem:suggestion-Hausel}, we again see that
 $$
 e(\cM_{J_-})+(q+1) e(\cM_{-\Id}) = q^8+q^7-2 q^6+13 q^5-26 q^4+13 q^3-2 q^2+q+1 =
  e(\cM_{\lambda})
  $$
 for genus $g=2$. Such equality is predicted to hold for arbitrary genus $g\geq 1$.
This amusing fact deserves an explanation.
\end{rmk}

\end{document}